\def\section{\@startsection{section}{1}%
  \z@{.7\linespacing\@plus\linespacing}{.5\linespacing}%
  {\normalfont\large\bfseries\centering}}
\renewenvironment{abstract}{%
  \ifx\maketitle\relax
    \ClassWarning{\@classname}{Abstract should precede
      \protect\maketitle\space in AMS document classes; reported}%
  \fi
  \global\setbox\abstractbox=\vtop \bgroup
    \normalfont\Small
    \list{}{\labelwidth\z@
      \leftmargin3pc \rightmargin\leftmargin
      \listparindent\normalparindent \itemindent\z@
      \parsep\z@ \@plus\p@
      
    }%
    \item[\hskip\labelsep\bfseries\abstractname.]%
}{%
  \endlist\egroup
  \ifx\@setabstract\relax \@setabstracta \fi
}
\def\@setabstract{\@setabstracta \global\let\@setabstract\relax}
\def\@setabstracta{%
  \ifvoid\abstractbox
  \else
    \skip@20\p@ \advance\skip@-\lastskip
    \advance\skip@-\baselineskip \vskip\skip@
    \box\abstractbox
    \prevdepth\z@ 
  \fi
}
\renewcommand{\tocsection}[3]{%
  \indentlabel{\@ifnotempty{#2}{\bfseries\ignorespaces#1 #2\quad}}\bfseries#3}
\renewcommand{\tocsubsection}[3]{%
  \indentlabel{\@ifnotempty{#2}{\ignorespaces#1 #2\quad}}#3}
\newcommand\@dotsep{4.5}
\def\@tocline#1#2#3#4#5#6#7{\relax
  \ifnum #1>\c@tocdepth 
  \else
    \par \addpenalty\@secpenalty\addvspace{#2}%
    \begingroup \hyphenpenalty\@M
    \@ifempty{#4}{%
      \@tempdima\csname r@tocindent\number#1\endcsname\relax
    }{%
      \@tempdima#4\relax
    }%
    \parindent\z@ \leftskip#3\relax \advance\leftskip\@tempdima\relax
    \rightskip\@pnumwidth plus1em \parfillskip-\@pnumwidth
    #5\leavevmode\hskip-\@tempdima{#6}\nobreak
    \leaders\hbox{$\m@th\mkern \@dotsep mu\hbox{.}\mkern \@dotsep mu$}\hfill
    \nobreak
    \hbox to\@pnumwidth{\@tocpagenum{\ifnum#1=1\bfseries\fi#7}}\par
    \nobreak
    \endgroup
  \fi}
\renewcommand\csname r@tocindent0\endcsname{0pt}
\def\l@subsection{\@tocline{2}{0pt}{2.5pc}{5pc}{}}
\numberwithin{equation}{section}
\newtheorem{theorem}{Theorem}[section]
\newtheorem{lemma}{Lemma}[section]
\newtheorem{proposition}{Proposition}[section]
\newtheorem{remark}{Remark}[section]
\newtheorem{definition}{Definition}[section]
\DeclareMathOperator*{\esssup}{ess\,sup}
\newcommand{\bu}{\mathbf{u}}
\newcommand{\bq}{\mathbf{q}}
\newcommand{\rmdiv}{\mathrm{div}}
\newcommand{\bbS}{\mathbb{S}}
\newcommand{\bfF}{\mathbf{F}}
\newcommand{\sfB}{\mathsf{B}}
\newcommand{\bfn}{\mathbf{n}}
\newcommand{\bfv}{\mathbf{v}}
\newcommand{\rmd}{\mathrm{d}}
\newcommand{\bmphi}{\bm{\phi}}
\newcommand{\bmpsi}{\bm{\psi}}
\newcommand{\vphi}{\varphi}
\DeclareMathOperator{\R}{\mathbb{R}}
\begin{document}
\date{\today}

\title[MHD-structure interaction problem]
{Weak solutions to a full compressible magnetohydrodynamic flow interacting with thermoelastic structure}

\author[Bhandari]{Kuntal Bhandari}
\address[Kuntal Bhandari]{\newline Institute of Mathematics, Czech Academy of Sciences, \v{Z}itn\'{a} 25, 11567 Praha 1, Czech Republic.}
\email{bhandari@math.cas.cz.}

\author[Huang]{Bingkang Huang}
\address[Bingkang Huang]{\newline School of Mathematics, Hefei University of Technology, Hefei 230009, China.}
\email{bkhuang92@hotmail.com, bkhuang@whu.edu.cn.}

\author[Ne\v{c}asov\'{a}]{\v{S}\'{a}rka Ne\v{c}asov\'{a}}
\address[\v{S}\'{a}rka Ne\v{c}asov\'{a}]{\newline Institute of Mathematics, Czech Academy of Sciences, \v{Z}itn\'{a} 25, 11567 Praha 1, Czech Republic.}
\email{matus@math.cas.cz.}

{\thanks{\textbf{Funding information.} The research of B. Huang is supported by National Natural Science Foundation of China No.11901148 and the Fundamental Research Funds for the Central Universities No.JZ2022HGTB0257.    K. Bhandari and \v{S}. Ne{\v{c}}asov{\'{a}} received funding from   the  Praemium Academiae of \v{S}.  Ne{\v{c}}asov{\'{a}}. The Institute of Mathematics, CAS is supported by RVO:67985840}  
}


\begin{abstract} This paper is concerned with  an interaction problem  between a full compressible, electrically conducting fluid and a thermoelastic shell in a two-dimensional setting. 
The shell is modeled by linear thermoelasticity equations,  and encompasses a time-dependent domain which is filled with a fluid described by full compressible (non-resistive) magnetohydrodynamic equations. The magnetohydrodynamic flow and the shell are fully coupled, resulting in a fluid-structure interaction problem that involves heat exchange. We establish the existence of weak solutions through {\em domain extension}, {\em operator splitting}, {\em decoupling}, {\em penalization of the interface condition}, and {\em appropriate limit passages}.  
\\[2mm]
\noindent
{\bf Key words.} Fluid-structure interaction; full compressible magnetohydrodynamics; weak solutions. 
\\[2mm]
\noindent
{\bf 2020 Mathematics Subject Classification.} Primary: 35D30, 74F10; Secondary: 35M13, 74F05, 76W05.
\end{abstract}

\maketitle

{
\hypersetup{linkcolor=black}
\tableofcontents
}
\section{Introduction}

\subsection{The problem under study}
In this paper, we  consider the flow of a compressible, electrically and thermally conducting fluid in a two-dimensional region with  heat conducting elastic boundary. The fluid and structure are fully coupled through the continuity of velocity and temperature.\footnote{Specifically, we shall work with  entropy inequality  for the fluid and temperature equation for the  thermoelastic plate. We also point out that most of  the thermoelastic plate models in literature (including ours) are based upon   ``small  temperature''  with  respect to the
reference temperature, and  the entropy depends linearly on temperature; we refer \cite[Remark 1.5]{MMNRT-22}.}
Basically, the fluid domain is determined by elastic displacement which is in turn obtained by solving a linear Koiter shell equation with the forcing coming from the fluid. Thus, the fluid and the structure are fully coupled to each other, and it becomes a moving boundary problem.

\subsubsection{\bf Geometry of the structure and domain}
Let $\Omega\subset \mathbb{R}^2$ be an open, bounded, connected  domain whose boundary is parameterized by an injective map $\bmphi\in C^3(\Gamma;\mathbb{R}^2)$, that is $\partial\Omega=\bmphi(\Gamma)$, where $\Gamma=\mathbb{R}/\mathbb{Z}$ represents the one-dimensional torus (i.e., a circle). We use $\bfn(y)$ and $w(t,y)$ to denote the unit outer normal to $\Omega$ and the displacement of the structure, respectively.  With slight abuse of notation we shall identify functions
defined on $\Gamma$ and $\partial \Omega$. We further assume that the shell deforms in the normal direction, and that the structure displacement is  of the  form $w(t,y)\bfn(y)$, $y\in\Gamma$. Therefore, the deformed (or elastic) boundary at time $t$ can be  expressed by the following mapping: 
\begin{equation*}
\bmphi_{w}(t,y)=\bmphi(y)+w(t,y)\bfn(y),\quad t\in (0,T),\hspace{0.1cm} y\in\Gamma. 
\end{equation*}

According to a classical result from differential geometry (see \cite[Section 10]{Lee-03}), there exist positive constants $\alpha_{\partial\Omega}$ and $\beta_{\partial\Omega}$ such that the mapping $\bmphi_w(t,\cdot)$ is injective for $w(t,y)\in (\alpha_{\partial\Omega}, \beta_{\partial\Omega})$. The deformed boundary is then given by 
\begin{equation*}
\Gamma_{w}(t)=\{\bmphi_{w}(t,y)  \mid y\in \Gamma\}.
\end{equation*}

The fluid domain $\Omega_{w}(t)\subset\mathbb{R}^2$ varies with time and it is described as the interior of $\Gamma_{w}(t)$, such that $\partial\Omega_{w}(t)=\Gamma_{w}(t)$. 
To be more precise,  let $\widetilde{\bmphi_w}$ be an arbitrary injective smooth extension of $\bmphi_w$ to $\Omega$. Then the
fluid domain at time $t$ is defined by $\Omega_w(t)= \widetilde{\bmphi_w}(t, \Omega)$ (the detailed formulation is given by \eqref{flow-fun}).
 Observe  that, the fluid domain is
well-defined if $\bmphi_w(t, \cdot)$ is injective, which is true on  condition $w(t, \cdot)\in (\alpha_{\partial \Omega}, \beta_{\partial \Omega})$. 
Thus, our existence result is valid as long as this condition holds. 


Finally, we denote the moving domain and the interface domain as follows, both in Eulerian coordinates:
\begin{equation}\label{moving-domain}
Q_T^{w}=\cup_{t\in(0,T)}\{t\}\times\Omega_{w}(t), \quad \Gamma_T^{w}=\cup_{t\in(0,T)}\{t\}\times\Gamma_{w}(t). 
\end{equation}
Furthermore, the outer unit normal to $\Omega_{w}(t)$ is given by $\bfn_{w}(t):=\frac{\nabla \widetilde{\bmphi_{w}}^{-1}\bfn}{|\nabla \widetilde{\bmphi_{w}}^{-1}\bfn|}$, and the surface element is prescribed by 
$\rmd\Gamma_{w}(t)=\sigma_w\rmd\Gamma$, with $\sigma_{w}:=\mathrm{det} \nabla\widetilde{\bmphi_{w}}|\nabla \widetilde{\bmphi_{w}}^{-1}\bfn|$.


\subsubsection{\bf The equations for shell} 
We consider the following linear  thermoelasticity equations which describe the motion of the shell (see \cite[Section 1.5]{lagnese1989boundary}):  
\begin{equation}\label{shell}
	\left\{
		\begin{aligned}
&w_{tt}+\Delta^2w+\Delta\theta-\alpha_1\Delta w_{t}-\alpha_2\Delta w_{tt}=\sigma_w \bfF \cdot \bfn,\\
&\theta_t-\Delta\theta-\Delta w_t=\sigma_w q,
\end{aligned}
\right.
\end{equation}
where $w(t,y)$ is the displacement of the shell in the normal direction $\bfn$ w.r.t. the reference configuration $\Gamma$, $\theta(t,y)$ is the temperature of the shell, $\bfF$ is the surface force coming from the  pressure of the fluid and stress tensor, and $q$ is the entropy flux. Moreover, the constants  $\alpha_1$ and $\alpha_2$ are positive and respectively denote the coefficients of viscoelasticity and rotational inertia.


\subsubsection{\bf The magnetohydrodynamic flow}  
The full compressible magnetohydrodynamic (MHD) flow  in $(0,T)\times \mathbb R^{3}$ is governed by the following set of partial differential equations  (see for instance \cite{cabannes2012theoretical,Ducomet-Feireisl-CMP}):
\begin{align}
&\varrho_t+\rmdiv(\varrho\bu)=0,  \label{conti-eq} \\
&\mathbf B_t-\textbf{curl} (\bu \times \mathbf B) + \textbf{curl} (\nu \textbf{curl}\, \mathbf B) =0, \ \ \rmdiv \mathbf{B}=0 , 
\label{mag-eq}\\
& (\varrho \bu)_t + \rmdiv (\varrho \bu \otimes \bu) + \nabla p(\varrho, \vartheta) = \rmdiv \bbS(\vartheta, \nabla \bu) + \textbf{curl} \, \mathbf B \times \mathbf B ,  \label{momen-eq}  \\
&(\varrho s)_t  + \rmdiv (\varrho s \bu) + \rmdiv \left( \frac{\bq}{\vartheta}\right) = 
\frac{1}{\vartheta} \left( \bbS (\vartheta, \nabla \bu) : \nabla \bu - \frac{\bq \cdot \nabla \vartheta}{\vartheta} + \nu |\textbf{curl}\, \mathbf B|^2\right)  \label{entropy-eq} . 
\end{align}
Here, the unknowns $\varrho$, $\bu$, $\mathbf B$, $\vartheta$, $p$, and $s$ respectively represent the density, velocity,   magnetic field, temperature,  pressure, and entropy of the flow. The viscous stress tensor $\bbS(\vartheta,\nabla\bu)$ is defined by 
\begin{equation*} 
\bbS(\vartheta,\nabla\bu)=\mu(\vartheta)\left(\nabla\bu+\nabla^\top \bu- \frac{2}{3}\rmdiv\bu\mathbb{I}_{3}\right)+\eta(\vartheta)\rmdiv\bu\mathbb{I}_3, 
\end{equation*}
where $\mu(\vartheta)$, $\eta(\vartheta)$ are the shear and bulk viscosity coefficients, and $\mathbb{I}_3$ is the identity matrix in $\mathbb{R}^{3\times 3}$. 

Furthermore, $\nu$ is the resistivity coefficient, representing the diffusion of magnetic field. The heat flux $\bq$ satisfies the Fourier's law: 
\begin{equation*} 
	\bq=-\kappa(\vartheta)\nabla\vartheta,
\end{equation*}
where $\kappa(\vartheta)$ is the heat conductivity coefficient. 
Moreover,  Gibbs' relation holds  
\begin{equation}\label{Gibbs' eqeuation}
\vartheta D s(\varrho, \vartheta)=De(\varrho,\vartheta)+p(\varrho,\vartheta)D\left(\frac{1}{\varrho}\right), 
\end{equation}
 where $D$ denotes total differential and $e$ is the total energy of the flow.

\vspace*{.1cm} 

Motivated from the work \cite{LS-21},   in the present paper, we consider the case when the
motion of fluid takes place in the plane, precisely in the domain $Q^w_T$ while the magnetic field acts on the fluid only in the vertical direction. Thus, we choose 
\begin{align}
\begin{dcases} 
\varrho(t,x)= \varrho(t,x_1,x_2), \ \ \bu(t,x) = (\bu_1, \bu_2, 0 ) (t,x_1,x_2) , \\
\vartheta (t,x) = \vartheta(t,x_1,x_2), \ \ \mathbf B(t,x) = (0,0,b)(t,x_1,x_2) .
\end{dcases}
\end{align}
Here and in the sequel, we denote by $x$ the 2D spatial variable $(x_1,x_2)$, and by  the the notation $\bu$ we mean $(\bu_1,\bu_2)$, the velocity of the fluid in the plane domain. So, the system \eqref{conti-eq}--\eqref{entropy-eq} on time-space domain $Q^w_T$, reduces to  
%
\begin{equation}\label{full-MHD}
	\left\{
		\begin{aligned}
&\varrho_t+\rmdiv(\varrho\bu)=0,\\
&b_t+\rmdiv(b\bu)=0,\\
&(\varrho\bu)_t+\rmdiv(\varrho\bu\otimes\bu)+\nabla\left(p(\varrho,\vartheta)+\frac{1}{2}b^2\right)=\rmdiv\bbS(\vartheta,\nabla\bu),\\
&(\varrho s)_t+\rmdiv(\varrho s\bu)+\rmdiv\left(\frac{\bq}{\vartheta}\right)=\frac{1}{\vartheta}\left(\bbS(\vartheta,\nabla\bu):\nabla\bu-\frac{\bq\cdot\nabla\vartheta}{\vartheta}\right). 
\end{aligned}
\right.
\end{equation}
In above, the viscous stress tensor takes the form
\begin{equation}\label{tensor}
\bbS(\vartheta,\nabla\bu)=\mu(\vartheta)\left(\nabla\bu+\nabla^\top \bu- \rmdiv\bu\mathbb{I}_{2}\right)+\eta(\vartheta)\rmdiv\bu\mathbb{I}_2, 
\end{equation}
where  $\mathbb{I}_2$ is the identity matrix in $\mathbb{R}^{2\times 2}$.

\subsubsection{\bf The kinematic and dynamic coupling conditions} The kinematic coupling conditions state that 
the velocity and temperature are continuous on the interface $\Gamma_T:=(0,T)\times\Gamma$, namely, 
\begin{equation}\label{VT-con}
	\begin{aligned}
&w_t\bfn=\bu\circ \bmphi_w,\\
& \theta=\vartheta\circ \bmphi_w.
	\end{aligned}
\end{equation}
The structure interface elastodynamics is driven by the stress tensor and entropy,
\begin{equation}\label{F-Q-con}
	\begin{aligned}
&\bfF=-\left[\left(p(\varrho,\vartheta)\mathbb{I}_2+\frac{1}{2}b^2\mathbb{I}_2-\bbS(\vartheta, \nabla\bu)\right)\bfn_{w}\right]\circ\bmphi_{w},\\
&q=-\left(\frac{\bq}{\vartheta}\cdot\bfn_{w}\right)\circ\bmphi_{w},
\end{aligned}
\end{equation}
which are said to be the dynamic coupling conditions.

\subsubsection{\bf Initial conditions}
The initial data are given in the  following way:
\begin{equation}\label{ini-data}
	\begin{aligned}
&\varrho(0,\cdot)=\varrho_0,\hspace{0.3cm} b(0,\cdot)=b_0, \hspace{0.3cm} (\varrho\bu)(0,\cdot)=(\varrho\bu)_0, \hspace{0.3cm} \vartheta(0,\cdot)=\vartheta_0, \\
&w(0,\cdot)=w_0,\hspace{0.3cm}  w_t(0,\cdot)=v_0,\hspace{0.3cm} \theta(0,\cdot)=\theta_0.
\end{aligned}
\end{equation}
Moreover, we consider that  the initial data satisfy 
\begin{align}\label{ini-data-1}
&\varrho_0\in L^{\gamma}(\Omega_{w}(0)),\hspace{0.3cm}\varrho_0\geq 0, \hspace{0.3cm}\varrho_0\not\equiv 0, \hspace{0.3cm} \varrho_0|_{\mathbb{R}^2\setminus \Omega_{w}(0)}=0,
\notag\\
&\varrho_0>0 \hspace{0.3cm} \mathrm{in}\hspace{0.3cm} \{x\in \Omega_w(0) \mid (\rho\bu)_0 >0\},  \hspace{0.3cm} 
\frac{(\varrho\bu)_0^2}{\varrho_0}\in L^1(\Omega_{w}(0)), \notag\\
&b_0\in L^2({\Omega_w(0)}), \hspace{0.3cm}b_0\geq 0, \hspace{0.3cm}b_0\not\equiv 0, \hspace{0.3cm} b_0|_{\mathbb{R}^2\setminus \Omega_{w}(0)}=0,\\
&\vartheta_0 \in L^4(\Omega_w(0)), \ \vartheta_0>0 \hspace{0.3cm}\mathrm{a.e.} \hspace{0.2cm} \mathrm{in} \hspace{0.2cm} \Omega_{w}(0), \hspace{0.3cm}(\varrho s)_0:=\varrho_0s(\varrho_0,\vartheta_0)\in L^1(\Omega_{w}(0)),
\notag\\
&v_0\in L^2(\Gamma), \hspace{0.3cm} \sqrt{\alpha_2}v_0\in H^1(\Gamma), \hspace{0.3cm} w_0\in H^2(\Gamma), \hspace{0.3cm} \theta_0\in L^2(\Gamma), \hspace{0.3cm} \theta_0\geq 0 ,\notag 
\end{align}
Finally we assume that the following compatibility condition holds:
\begin{align*}
\alpha_{\partial \Omega} < w_0 < \beta_{\partial \Omega}.  
\end{align*}

\subsubsection{\bf{Constitutive relations}}
To simplify the presentation and include physically relevant cases,   we assume, in agreement with \cite{FN-09},  that the viscosity coefficients in $\eqref{tensor}$ are continuously differentiable functions of the temperature, namely $\mu, \eta \in C^1([0,\infty))$ satisfying 
\begin{equation}\label{vis-coe}
	\begin{aligned}
	&0<\underline{\mu}(1+\vartheta)\leq \mu(\vartheta)\leq \overline{\mu}(1+\vartheta), \hspace{0.3cm}\sup_{\vartheta\in[0,\infty)}|\mu'(\vartheta)|\leq C,\\
	&0\leq \underline{\eta}(1+\vartheta) \leq  \eta(\vartheta)\leq \overline{\eta}(1+\vartheta), \hspace{0.3cm}  \forall \vartheta>0, 
\end{aligned}
\end{equation}
where $\underline{\mu}>0$, $\overline{\mu}>0$, $\underline{\eta}\geq 0$ and $\overline{\eta}\geq 0$ are  constants. 

The heat conductivity coefficient satisfies 
\begin{equation}\label{heat-coe}
		0<\underline{\kappa}(1+\vartheta^3)\leq \kappa(\vartheta)\leq \overline{\kappa}(1+\vartheta^3), \hspace{0.3cm}  \forall \vartheta>0,
\end{equation}
where $\underline{\kappa}$ and $\overline{\kappa}$ are positive constants. 

\vspace*{.2cm} 

Further, motivated from \cite{FN-09,Ducomet-Feireisl-CMP}, we consider the following constitutive relations for the pressure, internal energy and entropy.  The pressure and internal energy satisfy 
\begin{equation}\label{state-equ}
\begin{aligned}
& p(\varrho,\vartheta)=p_M(\varrho,\vartheta)+p_R(\vartheta), \hspace{0.3cm} p_{M}(\varrho,\vartheta) ={\varrho^\gamma+\varrho\vartheta},\hspace{0.3cm}p_R(\vartheta)=\frac{a}{3}\vartheta^4,   \\
& e(\varrho,\vartheta)=e_M(\varrho,\vartheta)+e_R(\varrho, \vartheta), \hspace{0.3cm} e_{M}(\varrho,\vartheta)=\frac{\varrho^{\gamma-1}}{\gamma-1}+\vartheta ,\hspace{0.3cm}e_R(\varrho, \vartheta)=\frac{a}{\varrho}\vartheta^4, \\
& \mathrm{where \hspace{0.1cm}the  \hspace{0.1cm}Stefan-Boltzmann\hspace{0.1cm}constant\hspace{0.1cm} }a>0, \  \mathrm{and \hspace{0.1cm} {adiabatic\hspace{0.1cm}  index \hspace{0.1cm} \gamma>1}}.
\end{aligned}
\end{equation} 
On the other hand, in terms of Gibb's equation $\eqref{Gibbs' eqeuation}$, the entropy satisfies 
\begin{equation}\label{ent-sta}
s(\varrho,\vartheta)=s_M(\varrho,\vartheta)+s_R(\varrho,\vartheta), \hspace{0.3cm} s_M(\varrho,\vartheta)={\ln\left(\frac{\vartheta}{\varrho}\right)},\hspace{0.3cm}s_R(\varrho,\vartheta)=\frac{4a}{3}\frac{\vartheta^3}{\varrho}.
\end{equation}

\medskip 

In the present work, we aim at proving the existence of weak solutions to the fluid-structure interaction system $\eqref{shell}$, $\eqref{full-MHD}$ under the coupling conditions $\eqref{VT-con}$-$\eqref{F-Q-con}$, the initial conditions $\eqref{ini-data}$-$\eqref{ini-data-1}$, and constitutive relations \eqref{vis-coe}-\eqref{ent-sta}.

\subsection{Bibliographic comments}  
The fluid-structure interaction (FSI) problem arises in many real-life situation and understanding the interaction between the fluid and structure is crucial for various applications \cite{Bodnar-Sarka-Galdi}.  The mathematical study of FSI problems has gained its popularity in the last two decades and a lot of progress has been achieved. 
Let us mention some interesting results concerning the incompressible fluid-structure interaction problems. 
In \cite{CDEG-05,MS-22,MC-13,Grandmont-SIMA,LR-14}  (and references therein), the authors dealt with  
 the existence of a weak
solution of  FSI problems where the elastic structure is described by a lower dimensional model of a plate/shell type. On the other hand, in \cite{Benesova-JEMS}, the authors analyzed  the existence of a weak solution to FSI problem which involve regularized, nonlinear 3D viscoelastic structure.  We further mention that the work \cite{Muha-Canic-JDE} is dealt with FSI problem with linear multilayered structure. The above works consider large data case and a weak solution exists  as long as geometry does not degenerate, i.e.,  self-contact does not occur. 
For 
the local-in-time or small data existence of strong solutions to FSI problems, we refer the works 
\cite{Grandmont-Lequeurre-IHP,Ignatova-et-al,Maity-Raymond-Roy,Raymond-Vaninathan} and  references within. We further mention the work \cite{Grandmont-Hillairet}, where global-in-time
solution to a 2D-1D FSI model with a viscoelastic beam has been proved.   

\vspace*{.1cm}

For the case of compressible FSI problems, the mathematical literature  is comparatively less rich than the incompressible one. In this context, we first mention that the local-in-time existence of regular solutions has been proved in \cite{Boulakia-Guerrero,Kukavica-Tuffaha}.  
The local-in-time existence result for strong solution in 2D-1D framework has been established for
the compressible fluid-damped beam interaction in  \cite{Mitra-jmfm}.  For compressible  fluid-undamped wave interaction in a 3D-2D framework,  we refer \cite{Maity-Roy-Takeo}. Now,
concerning the existence of weak solutions for compressible FSI models, we mention the works  \cite{Breit-Sebastian-ANNPisa,Srjdan-Wang,Breit-Sebastian-ARMA}.  
Recently, the authors \cite{MMNRT-22} proved the existence of global-in-time weak solutions to the compressible Navier-Stokes-Fourier system interacting with thermoelastic shell. Furthermore, the existence of weak solutions to  the interaction problem between two compressible mutually non-interacting fluids and a shell of Koiter-type is considered in \cite{KMN-24}.  On the other hand, in \cite{Maity-takeo-nonlinear}
the existence of a strong solution for small time or small data is proven in the case
when there is an additional damping on the structure.

\vspace*{.1cm}

The elastic structure usually encompasses a time-dependent fluid domain. In a general time-dependent domain, the study of compressible  fluids is also an interesting mathematical problem and in this regard, we mention for instance the following works \cite{FKNNS-JDE,Kreml-jmpa-time-dep,BHN-2024,Kreml-Sarka-Piasecki}. The global-in-time weak solutions of the full compressible MHD flow in fixed spatial domain has been established in \cite{Ducomet-Feireisl-CMP} (see also the references therein). Recently, the existence of weak solutions for compressible MHD  flow driven by 
non-conservative boundary conditions is explored in \cite{Feireisl-JMFM-magnetic}. 

\vspace*{.1cm}

To the best of our knowledge, the mathematical analysis of  MHD flow interacting with elastic structure has not been analyzed in the literature.  In the present work, we are going to fill this gap at least partially by considering the two-dimensional situation where the magnetic field acts on the fluid  in the vertical direction.  

\subsection{Main result} 
We are in a position to state the main result of this work. 
\begin{theorem}\label{Main-Theorem} 
Suppose that the initial data $(\varrho_0, b_0, (\varrho\bu)_0, \vartheta_0, (\varrho s)_0, w_0, v_0, \theta_0)$ satisfy $\eqref{ini-data}$-$\eqref{ini-data-1}$. Assume that the equations of states satisfy $\eqref{state-equ}$-$\eqref{ent-sta}$, and the transport coefficients comply with $\eqref{vis-coe}$-$\eqref{heat-coe}$. Then, for given $\displaystyle\gamma>\frac{5}{3}$,  there exist a time $T>0$ and a weak solution to the fluid-structure interaction system $\eqref{shell}$, $\eqref{full-MHD}$ with $\eqref{VT-con}$ and $\eqref{F-Q-con}$ in the sense of Definition $\ref{weak-solutions}$ on the time interval $(0,T)$. Furthermore, either it holds that $T\rightarrow +\infty$ or the domain $\Omega_w(t)$ degenerates as $t\rightarrow T$. 
\end{theorem}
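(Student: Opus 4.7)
The plan is to adapt the multi-layer approximation philosophy developed for the compressible Navier--Stokes--Fourier--shell system in \cite{MMNRT-22} so as to accommodate the magnetic induction equation and the Lorentz-type total pressure $p(\varrho,\vartheta)+\tfrac12 b^2$ appearing in \eqref{full-MHD}. First I would embed the moving fluid region into a fixed container $B\subset\mathbb{R}^2$ with $\Omega_w(t)\Subset B$ for every admissible $w$: extend $\varrho$ and $b$ by zero across $\Gamma_w(t)$, take an artificial viscosity that is very large in $B\setminus\Omega_w(t)$, and penalize the kinematic coupling \eqref{VT-con} by a Brinkman-type drag $\tfrac{1}{\xi}\chi_{B\setminus\Omega_w}(\bu-\mathcal{E}[w_t\bfn])$, where $\mathcal{E}$ denotes a suitable extension of the shell velocity to $B$. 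In this way the MHD system is posed on a time-independent container and the interface conditions are enforced only in an approximate sense.

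Next I would introduce a time discretization based on \emph{operator splitting}: on each sub-interval of length $\tau$, decouple the problem into a shell step (solving \eqref{shell} with the fluid traction frozen from the previous step) and a fluid step (solving \eqref{full-MHD} on the frozen geometry, coupled through the penalization). On the fluid side I would add the standard Feireisl--Novotn\'y regularizations: an artificial pressure $\delta\varrho^{\Lambda}$ with $\Lambda$ large, and artificial viscosities $\varepsilon\Delta\varrho$, $\varepsilon\Delta b$ in the continuity and magnetic induction equations, together with nonnegative correctors that replace the entropy inequality by the regularized internal-energy balance. Existence at the fully discrete level is produced by a Faedo--Galerkin scheme in velocity combined with a Schauder fixed point for the decoupled map, after mollifying the initial data and keeping $w$ strictly inside $(\alpha_{\partial\Omega},\beta_{\partial\Omega})$.

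The scheme is then closed by a chain of limit passages -- $N\to\infty$ (Galerkin), $\tau\to 0$ (splitting), $\xi,\varepsilon\to 0$ (penalization and vanishing viscosity), and finally $\delta\to 0$ (vanishing artificial pressure). All of these rely on one and the same energy--entropy identity: testing the momentum equation by $\bu$ and the shell equation by $w_t$, and adding a multiple of the renormalized entropy balance, yields uniform bounds on $\sqrt{\varrho}\,\bu\in L^\infty_t L^2_x$, $\varrho\in L^\infty_t L^\gamma_x$, $b\in L^\infty_t L^2_x$, $\vartheta\in L^\infty_t L^4_x\cap L^2_t H^1_x$, together with $w\in L^\infty_t H^2(\Gamma)\cap W^{1,\infty}_t L^2(\Gamma)$ and $\theta\in L^\infty_t L^2(\Gamma)\cap L^2_t H^1(\Gamma)$. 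Because the planar symmetric assumption reduces the Lorentz force to $\nabla(\tfrac12 b^2)$, the magnetic field $b$ satisfies a pure transport equation identical in structure to the continuity equation, so its strong compactness can be obtained by a renormalized argument that mirrors the one used for $\varrho$.

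The main obstacle is the simultaneous passage to the limit in the nonlinear pressure $p(\varrho,\vartheta)$ and in $\tfrac12 b^2$ on the time-dependent geometry. I would address it by extending the Feireisl--Lions \emph{effective viscous flux} identity to the moving-boundary setting in the spirit of \cite{FKNNS-JDE,MMNRT-22}, identifying $\overline{p(\varrho,\vartheta)}=p(\overline{\varrho},\overline{\vartheta})$ and $\overline{b^2}=(\overline{b})^{2}$ almost everywhere; the hypothesis $\gamma>\tfrac53$ is used precisely to control the oscillation defect measure against the radiative components of $p$ and $s$ produced by the Stefan--Boltzmann contribution in \eqref{state-equ}--\eqref{ent-sta}. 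The coupling conditions \eqref{VT-con}--\eqref{F-Q-con} are recovered in the penalization limit via trace estimates on $\Gamma_w(t)$ combined with the shell regularity, and the continuity of temperature across the interface is deduced by matching the fluid entropy inequality with the shell heat equation, as in \cite{MMNRT-22}. The final dichotomy $T\to\infty$ versus degeneration of $\Omega_w(t)$ then follows from a standard continuation argument applied once $w$ leaves the admissible range $(\alpha_{\partial\Omega},\beta_{\partial\Omega})$.
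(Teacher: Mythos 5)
Your overall architecture (fixed container, operator splitting with penalization of \eqref{VT-con}, chain of limit passages ending with the artificial pressure) matches the paper's strategy, but two substantive points are either wrong or missing.

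First, the compactness of the pair $(\varrho,b)$. You propose to run the standard effective-viscous-flux argument and ``identify $\overline{p(\varrho,\vartheta)}=p(\overline{\varrho},\overline{\vartheta})$ and $\overline{b^2}=(\overline{b})^2$.'' This glosses over the central difficulty of the MHD coupling: the total pressure $\varrho^\gamma+\varrho\vartheta+\tfrac12 b^2+\delta(\varrho+b)^\beta$ depends on \emph{two} independently transported quantities, and the Lions--Feireisl monotonicity/convexity argument applies to a scalar. The flux identity only controls quantities of the form $\overline{(\varrho^\gamma+\tfrac12 b^2)(T_k(\varrho)+T_k(b))}$, and one must rule out cross-oscillations between $\varrho$ and $b$ before convexity of $z\mapsto z\log z$ can be invoked for each component. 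The paper resolves this with a two-fluid decomposition $\mathsf{d}=\varrho+b$, $\mathcal R_\varrho=\varrho/\mathsf{d}$, $\mathcal R_b=b/\mathsf{d}$, together with an ``almost compactness'' lemma for the ratios (Lemma \ref{strong-con}, in the spirit of \cite{Wen-21,VWY-19}), which shows that all oscillations live in the single scalar $\mathsf{d}$; without this (or an equivalent device) your identification of $\overline{b^2}$ does not follow. Second, your stated role for $\gamma>\tfrac53$ (controlling the oscillation defect measure against the radiative parts of $p$ and $s$) is not where the hypothesis is actually used: it enters through the Bogovski\u{\i}-type improved integrability $\varrho\in L^{\gamma+\theta_1}$ with $\gamma+\theta_1>2$, which is what validates the renormalized continuity/induction equations for $\bu\in L^2_tW^{1,2}_x$ and makes the truncation argument \eqref{auxiliary-T_k-T-rho} close (see Remark \ref{Remark-gamma-condition}).

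Two smaller deviations from the paper that are worth flagging, though they are matters of route rather than error: the paper sends the extended viscosity to \emph{zero} outside $\Omega_w(t)$ (via $g^\omega_w$ with $\|g^\omega_w\|_{L^p}\leq C\omega$ there) so that the exterior stress terms vanish in the limit, whereas you propose a large artificial viscosity plus a bulk Brinkman drag; and the kinematic condition is penalized on the interface $\Gamma$ inside the splitting scheme (with weight $\delta/\Delta t$), not in the bulk. Either choice could in principle be made to work, but the vanishing-exterior-coefficient route is what makes the limits $\omega,\zeta,\lambda,\xi\to0$ in Lemma \ref{Lemma-vanishing-lower-terms} straightforward, and you would need to supply the corresponding argument for your large-viscosity variant.
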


Let us state the main features in the analysis of our  fluid-structure interaction problem.

\begin{itemize}
    \item 
We consider a compressible, electrically and thermally conducting fluid in a two-dimensional bounded domain with an elastic boundary. Under the geometric assumptions on the structure, the displacement function $w(t,y)$ exhibits regularity, specifically $w \in W^{1,\infty}(0,T; L^2(\Gamma)) \cap L^\infty(0,T; H^2(\Gamma))$, which implies that the boundary defined in $\eqref{moving-domain}$ is Lipschitz (see $\eqref{embedding-Delta-t}$). This observation ensures that the regularity of the transformation is preserved, unlike when the structure is placed on a two-dimensional reference configuration $\Gamma=\mathbb{R}^2/\mathbb{Z}^2$ (see, for instance, \cite{MMNRT-22,KMN-24}). The fundamental reason is that $w \in H^{2}(\Gamma)$ does not guarantee the boundedness of $\|\nabla w\|_{L^\infty(\Gamma)}$ when $\Gamma=\mathbb{R}^2/\mathbb{Z}^2$. However, when $\Gamma =\mathbb{R}/\mathbb{Z}$, this issue does not arise. Furthermore, both the trace theorem and Korn's inequality hold without any loss of regularity.

    \item In the motion of the fluid, the magnetic field acts solely in the vertical direction. Consequently, the equation governing the magnetic field degenerates into a transport equation, which facilitates the use of the domain extension method to establish a weak solution. Moreover, domination conditions between the magnetic field and density are no longer required, as motivated by \cite{Wen-21}, which enhances the significance of our results. Notably, the range of the adiabatic index $\displaystyle \gamma > \frac53$ ensures that $\varrho \in L^2(0,T; L^2{(\Omega_w(t))})$ (a result derived in Lemma \ref{high-int-press}, see also Remark \ref{Remark-gamma-condition}), and the strong convergence of the densities w.r.t. the artificial pressure parameter $\delta$.   Subsequently, the renormalized equation holds, as detailed in Lemma \ref{Renormalized-equ}.
\end{itemize}

\subsection{Sketch of the proof for Theorem \ref{Main-Theorem}}   The proof of Theorem \ref{Main-Theorem} is split into three parts that associates to several approximation levels. We give a short sketch of the proof below. 

\begin{itemize} 
\item Given the assumption regarding the restricted deformations of the shell, there exists a sufficiently large $M_0 > 0$ such that $\Omega_w(t) \subseteq \mathsf{B} := {|x| < M_0}$ for all $t \in [0,T]$. Consequently, we consider the existence of a weak solution in the fixed domain $(0,T) \times \mathsf{B}$, thus avoiding the need to address solvability in variable domains. This demonstrates the robustness of the method and simplifies the proof. The extension involves approximating viscosity coefficients, heat conductivity coefficient, pressure, internal energy and entropy, by using several ``parameters'' (see Section \ref{Section-extension-parameters}). Additionally, we introduce the regularizing term $\delta \varrho^{\beta}$ ($\delta > 0$, $\beta \geq 4$) with the pressure. This regularization improves the integrability of the pressure term, a result that is well-known in the study of compressible fluids.

\item We must prove the existence of a weak solution in the fixed domain. To construct approximate solutions to the extended problem, we introduce a time step parameter $\Delta t$ and a time-marching scheme that combines a decoupling approach (for the fluid and the structure) with penalization of the kinematic coupling conditions. The main advantage of this approach is that the existence result for the fluid part (in the time-dependent domain) can be directly used from \cite{KMNW-18}.

\item By taking the initial density $\varrho_0$ (resp. the initial magnetic field $b_0$) as vanishing outside 
$\Omega_w(0)$,  it can be ensured that the density $\varrho$ (resp. the magnetic field $b$) vanishes outside the physical domain $\Omega_w(t)$ for each $t \in (0,T]$. Using this, our goal is to pass to the limit of the parameters associated with the viscosity coefficients, heat conductivity coefficient, pressure, internal energy, and entropy. In essence, we adapt the ideas from \cite{KMNW-18} (see also \cite{Kreml-jmpa-time-dep}) to obtain a solution in the physical space-time domain $Q_T^w$.

\item In the final step, we pass to the limit of $\delta$ (the pressure regularization limit) and determine the maximal interval of existence. This is a standard procedure in the existence proofs of weak solutions to fluid-structure interaction problems.

\end{itemize}

\subsection{Paper organization}
The rest of the paper is organized as follows. In Section \ref{Section-Geometry-ext}, the geometry extension and the concept of a weak solution are introduced. In Section \ref{Section-fixed-domain}, we establish the existence of a weak solution to the corresponding problem in a fixed domain. Several limit passages are considered in Sections \ref{Sec-limit-Delta-t}, \ref{Section-Limits} and \ref{Section-limit-delta}. Finally, concluding remarks and some useful lemmas are presented in Section \ref{Sec-Conclusion} and Appendix \ref{Sec-Appendix}, respectively.

\section{Geometry extension and weak solution}\label{Section-Geometry-ext}

\subsection{Geometry extension}
Define a tubular neighborhood of $\partial\Omega$ as 
$$N_{\alpha,\beta}:=
\left\{\bmphi(y)+\bfn(\bmphi(y))z \mid y\in \Gamma, z\in(\alpha_{\partial\Omega}, \beta_{\partial\Omega})\right\},$$
and the projection mapping $\pi:N_{\alpha,\beta}\rightarrow \partial\Omega$ such that for any $y\in N_{\alpha,\beta}$, there exist a unique $\pi(y)\in \partial \Omega$ and $z$ satisfying $y-\pi(y)=\bfn(\pi(y))z$. Furthermore, the signed distance function $d: N_{\alpha,\beta}\rightarrow (\alpha_{\partial\Omega}, \beta_{\partial\Omega})$ is defined as 
$$ d: y \mapsto (y-\pi(y))\cdot\bfn(\pi(y)). $$  

We also denote the following distance function
\begin{equation*} 
\widetilde{d}(y)=\left\{
\begin{aligned}
&-\mathrm{dist}(y,\partial\Omega), \quad \mathrm{when} \quad y\in \overline\Omega,\\
&\mathrm{dist}(y,\partial\Omega),\quad \mathrm{when} \quad y\in \mathbb{R}^2\backslash\overline\Omega.
\end{aligned}
\right.
\end{equation*}
Note that, $d$ and $\widetilde d$ agree in the tubular region $N_{\alpha, \beta}$.

Let $w$ be any displacement function such that
\begin{align*}
&w \in  W^{1, \infty}(0,T; L^2(\Gamma)) \cap L^\infty(0,T;  H^2(\Gamma))  \cap H^1(0,T; H^1(\Gamma)), \\
&\alpha_{\partial \Omega} < m \leq  w \leq  M < \beta_{\partial \Omega} \textnormal{ on } \Gamma_T. 
 \end{align*}
Subsequently, we define the flow function $\widetilde{\bmphi_w}:[0,T]\times\mathbb{R}^2\rightarrow \mathbb{R}^2$
\begin{equation}\label{flow-fun}
  \widetilde{\bmphi_w}(t,y)=y+f_{\Lambda}(\widetilde{d}(y))w(t,\bmphi^{-1}(\pi(y)))\bfn(\pi(y)),
\end{equation}
where cut-off function $f_{\Lambda}(\cdot)\in C_c^\infty(\mathbb{R})$ satisfying $f_{\Lambda}(\cdot)=(f*g_{\alpha})(\cdot)$ with a standard mollifier $g_{\alpha}$ which has support in $(-\alpha, \alpha)$ for $\alpha<\frac{1}{2}\min\{m'-m'', M''-M'\}$, where we consider 
\begin{align*}
\alpha_{\partial\Omega}<m''<m'<m<M<M'<M''<
\beta_{\partial\Omega}  , 
\end{align*} 
 and $f(\cdot)$ is given by  
\begin{align*}
   f(y) = 
\begin{cases} 
    \displaystyle   1 - \frac{y - m'' + m'}{m'}   & \text{ for } y \in (m'', m'-m'') , \\
 \displaystyle   1 & \text{ for } y \in [m'-m'', M''-M'), \\
    \displaystyle    1-\frac{y-M''+M'}{M'} & \text{ for } y \in [M'' - M', M''), \\
     \displaystyle      0 & \text{ for } y \in (-\infty, m''] \cup  [M'', + \infty).
\end{cases} 
\end{align*} 
In fact, for $y\in N_{\alpha,\beta}$, one can rewrite the map $\widetilde{\bmphi_w}$ (with the help of $y-\pi(y)=\mathbf n(\pi(y)) d(y)$) as follows: 
\begin{equation}\label{extension-fun}
  \widetilde{\bmphi_w}(t,y) = \left[1-f_{\Lambda}({d}(y))\right] y+f_{\Lambda}({d}(y))\left[\pi(y)+\left({d}(y)+ w(t,\bmphi^{-1}(\pi(y))) \right)\bfn(\pi(y))\right], 
\end{equation}
and for $z \in N_{\alpha,\beta}$, the associated inverse map is  
\begin{equation*}
  \widetilde{\bmphi_w}^{-1}(t,z)=\left[1-f_{\Lambda}({d}(z))\right]z + 
  f_{\Lambda}({d}(z))\left[\pi(z)+
  \left( {d}(z) - w(t,\bmphi^{-1}(\pi(z)) ) \right)\bfn(\pi(z))\right]. 
\end{equation*}
In other words, for $\widetilde{\bmphi_w}^{-1}:[0,T]\times\mathbb{R}^2\rightarrow \mathbb{R}^2$, we can write  
\begin{equation*}
  \widetilde{\bmphi_w}^{-1}(t,z)=z
  -f_{\Lambda}(\widetilde{d}(z)) w(t,\bmphi^{-1}(\pi(z)))\bfn(\pi(z)).
\end{equation*}

From the explicit expressions of $\widetilde{\bmphi_w}$ and $\widetilde{\bmphi_w}^{-1}$, we observe that they inherit the same regularity from $w$.  

\vspace*{.1cm}
With the above constructions, 
we define (as mentioned earlier) $\Omega_w(t)=\widetilde{\bmphi_w}(t, \Omega)$ with the boundary $\Gamma_w(t)=\widetilde{\bmphi_w}(t, \partial\Omega)$ for all $t\in [0,T]$, where $\Omega \subset \mathbb R^2$, as given before with $\partial \Omega = \bmphi(\Gamma)$, and 
$\widetilde{\bmphi_w}$ is given by $\eqref{extension-fun}$. 

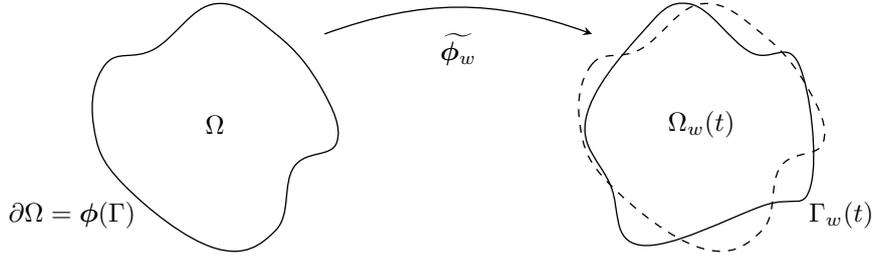
\begin{figure}[htbp!]
	\centering
	\begin{tikzpicture}[line width=0.5pt,scale=0.8]
		
		\draw plot[smooth cycle, tension=0.7] coordinates{(0,0) (0.2,1) (1,1.25) (2,2) (3,1.5) (3.75,0.6) (4,-0.3) (3.3,-0.7) (3,-1.75) (2,-2.05) (0.5,-1)};

 \draw [shift ={(8,0)}] plot[smooth cycle, tension=0.7] coordinates{(0.1,0.1) (1,1.5) (1.8,2) (2.8,1.2) (3.6,1) (3.8,-1) (3,-1.35) (1,-2) (0.5,-1)};
 \draw [dashed,shift ={(8,0)}] plot[smooth cycle, tension=0.7] coordinates{(0,0) (0.2,1) (1,1.25) (2,2) (3,1.5) (3.75,0.6) (4,-0.3) (3.3,-0.7) (3,-1.75) (2,-2.05) (0.5,-1)};

		
		
		\draw [-stealth] (3.8,1.5) to [bend left=20] (8.2,1.5);
		\draw node at (6,1.25) {$\widetilde{ \bmphi_w}$}; 
		
		\draw node at (2,0) {$\Omega$};
		\draw node at (-0.35,-1.5) {$\partial \Omega=\bmphi(\Gamma)$};
		\draw node at (10,0) {$\Omega_w(t)$};
		\draw node at (12.3,-1.5) {$\Gamma_w(t)$};
		
	\end{tikzpicture}
	\caption{The $2$D fluid domain $\Omega_w(t)$ determined by the shell $\Gamma_w(t)$  at time $t$.}
	\label{fig:domain}
\end{figure}


\subsection{Weak formulations}
We present the definition of weak solutions to the fluid-structure system $\eqref{shell}$, $\eqref{full-MHD}$ with the kinematic and dynamic conditions $\eqref{VT-con}$-$\eqref{F-Q-con}$, initial data \eqref{ini-data-1}, and constitutive relations \eqref{vis-coe}, \eqref{heat-coe}, \eqref{state-equ} and  \eqref{ent-sta}.  Compared to the classical definition of the weak solutions to the full compressible MHD, the equations of the motion of the shell $\eqref{shell}$ should be also incorporated. Specifically, the explicit definition of weak solutions is given in the following sense.
\begin{definition}\label{weak-solutions}
  We call that $(\varrho, b, \bu, \vartheta, w, \theta)$ is a weak solution to the fluid-structure system $\eqref{shell}$, $\eqref{full-MHD}$ with the initial data $(\varrho_0, b_0, (\varrho\bu)_0, \vartheta_0, (\varrho s)_0, w_0, v_0, \theta_0)$ satisfying $\eqref{ini-data}$-$\eqref{ini-data-1}$ if the following conditions hold: 
  \begin{itemize}
\item Function spaces:
unknowns $(\varrho, b, \bu, \vartheta)$ satisfy that 
\begin{equation}\label{fun-spa-1}
\left\{\begin{aligned}
&\varrho \geq 0,\hspace{0.3cm}b\geq 0, \hspace{0.3cm} \vartheta >0\hspace{0.3cm} \mathrm{a.e.\hspace{0.1cm} in}\hspace{0.3cm} Q_T^{w},\\
&\varrho \in L^\infty(0,T; L^\gamma(\Omega_w(t))) \cap L^2(0,T;L^2(\Omega_w(t))), \\
&b \in L^\infty(0,T; L^2(\Omega_w(t))), \\
&\bu \in L^2(0,T;W^{1,p}(\Omega_w(t))) \hspace{0.1cm}{\mathrm{with}} \hspace{0.1cm} 1\leq p \leq 2, \hspace{0.3cm} \varrho|\bu|^2\in L^\infty(0,T;L^1(\mathbb{R}^2)),\\
&\vartheta \in L^\infty (0,T;L^4(\Omega_w(t))), \hspace{0.3cm}\vartheta, \nabla\vartheta, \log\vartheta, \nabla \log\vartheta \in L^2(Q_T^{w}),\\
&\varrho s, \varrho s\bu, \frac{\bq}{\vartheta} \in  L^1(Q_T^{w});
\end{aligned}
\right.
\end{equation}
and unknowns $(w, \theta)$ satisfy that 
\begin{equation}\label{fun-spa-2}
	\left\{\begin{aligned}
& w \in L^\infty (0,T;H^2(\Gamma))\cap W^{1,\infty}(0,T; L^2(\Gamma)), \\
& \sqrt{\alpha_1}w\in H^1(0,T;H^1(\Gamma)),\hspace{0.3cm} \sqrt{\alpha_2} w \in W^{1,\infty}(0,T;H^1(\Gamma)), \\
& \theta >0\hspace{0.1cm} {\mathrm{a.e.}}\hspace{0.1cm}\mathrm{in} \hspace{0.1cm} \Gamma_T, \hspace{0.3cm} \theta \in L^\infty(0,T;L^2(\Gamma))\cap L^2(0,T;H^1(\Gamma)),\\
&\log \theta \in L^2(0,T;H^s(\Gamma)), \hspace{0.1cm} \mathrm{with} \hspace{0.1cm}0< s\leq \frac{1}{2}.
  \end{aligned}
\right.
\end{equation}

\item The continuity on the interface $\Gamma_T^{w}$: $w_t \bfn=\bu|_{\Gamma_{w}(t)}$ and $\theta=\vartheta|_{\Gamma_{w}(t)}$ for all $t\in(0,T)$.

\item The continuity equation
\begin{equation}\label{weak-con}
\iint_{Q_T^w}\left(\varrho\partial_t\phi+\varrho\bu\cdot\nabla\phi\right)=-\int_{\Omega_w(0)}\varrho_0\phi(0,\cdot)
\end{equation}
holds for all $\phi\in C^1_c([0,T)\times\overline{\Omega_w(t)})$.


\item The non-resistive magnetic equation
\begin{equation}\label{mag-equ}
\iint_{Q_T^w}\left(b\partial_t\phi+b\bu\cdot\nabla\phi\right)=-\int_{\Omega_w(0)}b_0\phi(0,\cdot)
 \end{equation}
 holds for all $\phi\in C^1_c([0,T)\times\overline{\Omega_w(t)})$.

We must mention here that the equations \eqref{weak-con} and \eqref{mag-equ} should also be satisfied in the sense of renormalized solutions as was introduced by DiPerna and Lions \cite{Lions-Diperna}, see Lemma \ref{Renormalized-equ} in the present paper.

\item The coupled momentum equation
  \begin{align}\label{mom-equ}
 &\iint_{Q_T^w}\varrho\bu \cdot\partial_t\bmphi+\iint_{Q_T^w}\left(\varrho\bu\otimes\bu+p(\varrho,\vartheta)\mathbb{I}_2+\frac{1}{2}b^2\mathbb{I}_2-\bbS(\vartheta,\nabla\bu)\right): \nabla\bmphi\notag\\
 &+\iint_{\Gamma_T}(w_t\psi_t-\Delta w\Delta\psi-\alpha_1\nabla w_t\cdot\nabla\psi +\alpha_2\nabla w_t\cdot\nabla\psi_t+\nabla\theta\cdot\nabla\psi)\\
 &=-\int_{\Omega_w(0)}(\varrho\bu)_0\bmphi(0,\cdot)-\int_{\Gamma}v_0\psi(0,\cdot)-\alpha_2\int_{\Gamma}\nabla v_0\cdot\nabla\psi(0,\cdot), \notag
 \end{align}
holds for all $\bmphi\in C^1_c([0,T)\times\overline{\Omega_w(t)}) $ and $\psi\in C_c^\infty([0,T)\times \Gamma)$ such that $\bmphi|_{\Gamma_w(t)}= \psi \bfn$ on $\Gamma_T$. 

\item The coupled entropy inequality
  \begin{align}\label{entropy-ine}
&\iint_{Q_T^w}\left(\varrho s\vphi_t+\varrho s \bu\cdot\nabla\vphi-\frac{\kappa(\vartheta)\nabla\vartheta}{\vartheta}\cdot\nabla\vphi\right)+\iint_{Q_T^w}\frac{\vphi}{\vartheta}\left(\bbS(\vartheta,\nabla\bu):\nabla\bu+\frac{\kappa(\vartheta)|\nabla\vartheta|^2}{\vartheta}\right)\notag\\
&+\iint_{\Gamma_T}(\theta\tilde\varphi_t-\nabla\theta\cdot\nabla\tilde\varphi+\nabla w\cdot\nabla\tilde\varphi_t)\\
&\leq -\int_{\Omega_w(0)}\varrho_0 s(\varrho_0,\vartheta_0)\vphi(0,\cdot)-\int_{\Gamma}\theta_0\tilde\varphi(0,\cdot)-\int_{\Gamma}\nabla w_0\cdot\nabla\tilde\varphi(0,\cdot), \notag
\end{align}
holds for all non-negative function $\vphi\in C^1_c([0,T)\times\overline{\Omega_w(t)}) $ and $\tilde\varphi \in C_c^\infty([0,T)\times \Gamma)$ such that $\vphi|_{\Gamma_w(t)}=\tilde\vphi$ on $\Gamma_T$. 

\item The coupled energy inequality
  \begin{align}\label{energy-ine}
    &\int_{\Omega_w(t)}\left(\frac{1}{2}\varrho|\bu|^2+\frac{1}{2}b^2+\varrho e(\varrho, \vartheta)\right)(t)\notag\\
    &+\int_{\Gamma}\left(\frac{1}{2}|w_t|^2+\frac{1}{2}|\Delta w|^2+\frac{\alpha_2}{2}|\nabla w_t|^2+\frac{1}{2}|\theta|^2\right)(t)+\int_0^t\int_{\Gamma}(\alpha_1|\nabla w_t|^2+|\nabla \theta|^2)\\
    &\leq \int_{\Omega_w(0)}\left(\frac{1}{2}\frac{|(\varrho\bu)_0|^2}{\varrho_0}+\frac{1}{2}b_0^2+\varrho_0e(\varrho_0,\vartheta_0)\right)\notag\\
&\hspace{0.5cm}+\int_{\Gamma}\left(\frac{1}{2}|v_0|^2+\frac{1}{2}|\Delta w_0|^2+\frac{\alpha_2}{2}|\nabla v_0|^2+\frac{1}{2}|\theta_0|^2\right), \notag
\end{align}
holds for all $t\in [0,T]$. 
\end{itemize}
\end{definition}

\begin{remark}
 In contrast to \cite{FN-09}, we consider energy inequality rather than energy equality, which is sufficient for our definition of weak solution. In fact,  if
the above defined weak solution is smooth enough, it will be a strong one. This fact has been justified in  \cite[Section 1.2]{paer-Poul}, see also the book \cite{Book-Sarka}. 
\end{remark}


\section{Corresponding problem in a fixed domain}\label{Section-fixed-domain} 

Based on the energy estimate $\eqref{energy-ine}$, it is revealed that the displacement $w$ is bounded. Thus, we can choose a sufficient large $M_0>0$ such that $\Omega_w(t)\subseteq \mathsf{B}:=\{|x|<M_0\}$ for all $t\in [0,T]$. Then, by using  appropriate approximations of the initial data and transport coefficients, we extend the coupled system to the fixed domain $\mathsf{B}$.

\subsection{Extension of initial data and transport coefficients}\label{Section-extension-parameters}
For the given displacement $w$ and a positive constant $\omega>0$, the shear and bulk viscosity coefficients $\mu(\vartheta)$ and $\eta(\vartheta)$ are approximated by
\begin{equation}\label{vis-app}
\mu_w^{\omega}:=\mu(\vartheta)g_w^{\omega},\hspace{0.3cm}\eta_w^{\omega}:=\eta(\vartheta)g_w^{\omega} , 
\end{equation}
where $g_w^{\omega}$ is a smooth function depending on the displacement $w$ and satisfying 
\begin{equation}\label{g-app}
  \begin{aligned}
&g_w^{\omega}(t, x)\in C_c^\infty([0,T]\times\mathbb{R}^2), \hspace{0.3cm} 0<\omega <g_w^{\omega}(t, x)\leq 1\hspace{0.1cm}\mathrm{in}\hspace{0.1cm} [0,T]\times\mathsf{B},\\
&g_w^{\omega}(t, x)|_{\Omega_w(t)}=1, \hspace{0.1cm} \forall t\in[0,T], \hspace{0.3cm}\|g_w^{\omega}\|_{L^p(([0,T]\times\mathsf{B})\setminus Q_T^w)}\leq C\omega, {\hspace{0.1cm}\mathrm{for}\hspace{0.1cm} \mathrm{some} \hspace{0.1cm} p\geq \frac{5}{3}}.
\end{aligned}
\end{equation}

Similarly, for positive constant $\zeta>0$, the conductivity coefficient is approximated as 
\begin{equation}\label{con-app}
\kappa_w^{\zeta}:=\kappa(\vartheta)h_w^{\zeta},
\end{equation}
where 
\begin{equation}\label{h-app}
    h_w^{\zeta}=1 \hspace{0.1cm}\mathrm{in} \hspace{0.1cm} Q_T^w, \hspace{0.3cm} h_w^{\zeta}=\zeta \hspace{0.1cm} \mathrm{in} \hspace{0.1cm} ([0,T]\times\mathsf{B})\setminus Q_T^w.
\end{equation}

Furthermore, the positive constant $a$ in the pressure and internal energy (see $\eqref{state-equ}$) is approximated as
\begin{equation}\label{a-app}
a_{w}^{\lambda}:= af_w^{\lambda},
\end{equation}
where 
\begin{equation}\label{a-app-f}
    f_w^{\lambda}=1 \hspace{0.1cm}\mathrm{in} \hspace{0.1cm} Q_T^w, \hspace{0.3cm} f_w^{\lambda}=\lambda \hspace{0.1cm} \mathrm{in} \hspace{0.1cm} ([0,T]\times\mathsf{B})\setminus Q_T^w.
\end{equation}
With this, 
the pressure  term is approximated as follows
\begin{align}
p_w^{\lambda, \delta} := p_M(\varrho, \vartheta) + \frac{a^\lambda_w}{3}\vartheta^4+ \delta (\varrho+b)^\beta, \ \ \delta > 0, \ \beta \geq \max\{4,\gamma\} ,  
\end{align}
and the internal energy and specific entropy are approximated as 
\begin{align}\label{entropy-w-lambda}
e_w^{\lambda}: = e_{M}(\varrho, \vartheta) + \frac{a^\lambda_w}{\varrho} \vartheta^4, \ \ s^\lambda_w:= s_{M}(\varrho, \vartheta) + \frac{4a^\lambda_w}{3}\frac{\vartheta^3}{\varrho}. 
\end{align}

The initial data $(\varrho_0, b_0, (\varrho\bu)_0, \vartheta_0)$ in \eqref{ini-data-1} will be approximated using the parameter $\delta$ and a sufficiently large parameter $\beta \geq  \max\{4,\gamma\}$, satisfying 
  \begin{align}\label{ini-app}
&\varrho_{0,\delta}, \hspace{0.3cm}\varrho_{0,\delta}\not\equiv 0, \hspace{0.3cm} \varrho_{0,\delta}|_{\mathbb{R}^2\setminus{\Omega_w(0)}}=0, \hspace{0.3cm}\int_{\mathsf{B}}(\varrho_{0,\delta}^{\gamma}+\delta\varrho_{0,\delta}^\beta)\leq C,\notag\\
&\varrho_{0,\delta}\rightarrow \varrho_0 \hspace{0.1cm}\mathrm{in}\hspace{0.1cm} L^{\gamma}(\mathsf{B}), \hspace{0.3cm}|\{\varrho_{0,\delta}<\varrho_0\}|\rightarrow 0\hspace{0.3cm} \mathrm{as}\hspace{0.2cm} \delta \rightarrow 0,\notag\\
&{b_{0,\delta}, \hspace{0.3cm}b_{0,\delta}\not\equiv 0, \hspace{0.3cm} b_{0,\delta}|_{\mathbb{R}^2\setminus{\Omega_w(0)}}=0, \hspace{0.3cm}\int_{\mathsf{B}}(b_{0,\delta}^2+\delta b_{0,\delta}^\beta)\leq C,}\\
&b_{0,\delta}\rightarrow b_0 \hspace{0.1cm}\mathrm{in}\hspace{0.1cm} L^{2}(\mathsf{B}), \hspace{0.3cm}|\{b_{0,\delta}<b_0\}|\rightarrow 0 \hspace{0.3cm} \mathrm{as}\hspace{0.2cm} \delta \rightarrow 0,\notag\\
&\vartheta_{0,\delta}\geq \underline{\vartheta}\geq 0, \hspace{0.3cm}\vartheta_{0,\delta}\in L^\infty(\mathsf{B})\cap C^{2+\nu_0}(\mathsf{B}), \, \text{ for some } \nu_0>0,\notag
\end{align}
\begin{equation}\label{mom-app}
  (\varrho\bu)_{0,\delta}=\left\{
    \begin{aligned}
&(\varrho\bu)_0,\hspace{0.1cm} \mathrm{when}\hspace{0.1cm} \varrho_{0,\delta}\geq \varrho_0,\\
&0, \hspace{0.1cm}\mathrm{otherwise}, 
\end{aligned}
\right.
\hspace{0.3cm} \int_{\mathsf{B}}\frac{1}{\varrho_{0,\delta}}|(\varrho\bu)_{0,\delta}|^2\leq C, 
\end{equation}
\begin{equation}\label{ene-entropy-app}
  \begin{aligned}
   & \varrho_{0,\delta}e(\varrho_{0,\delta},\vartheta_{0,\delta})\rightarrow   \varrho_{0}e(\varrho_{0},\vartheta_{0}) \hspace{0.1cm}\mathrm{in}\hspace{0.1cm} L^1(\Omega_w(0)), \\
&\varrho_{0,\delta}s(\varrho_{0,\delta},\vartheta_{0,\delta})\rightharpoonup  \varrho_{0}s(\varrho_{0},\vartheta_{0}) \hspace{0.1cm}\mathrm{in}\hspace{0.1cm} L^1(\Omega_w(0)).
\end{aligned}
\end{equation}

Now we give the precise definition of the weak solution to the fluid-structure system $\eqref{shell}$, $\eqref{full-MHD}$ with $\eqref{VT-con}$-$\eqref{F-Q-con}$ and $\eqref{ini-data}$-$\eqref{ini-data-1}$, when the time-varying domain $\Omega_w(t)$ is extended to the fixed domain $\mathsf{B}$.

\subsection{Weak formulations in the reference domain} 

Let us now write the weak formulations in the extended fixed domain $(0,T)\times \mathsf{B}$. 
\begin{definition}\label{weak-solutions-on-fixed-domain}
  We say that $(\varrho, b, \bu, \vartheta, w, \theta)$ is a weak solution to the fluid-structure system $\eqref{shell}$, $\eqref{full-MHD}$ in the fixed domain  $(0,T)\times \mathsf{B}$ with the initial data $(\varrho_0, b_0, (\varrho\bu)_0, \vartheta_0, (\varrho s)_0, w_0, v_0, \theta_0)$ satisfying $\eqref{ini-data}$-$\eqref{ini-data-1}$ if the following items hold: 
  
  \begin{itemize}
\item Function spaces:
unknowns $(\varrho, b, \bu, \vartheta)$ satisfy that 
\begin{equation}\label{fun-spa-B}
	\left\{\begin{aligned}
&\varrho \geq 0,\hspace{0.3cm} \vartheta >0\hspace{0.3cm} \mathrm{a.e.\hspace{0.1cm} in}\hspace{0.2cm} \mathsf{B} ,\\
&\varrho \in L^\infty(0,T; L^\gamma(\mathsf{B})), \\
&b \in L^\infty(0,T; L^2(\mathsf{B})), \\
&\bu \in L^2(0,T;W^{1,p}(\mathsf{B})) \hspace{0.1cm}{\mathrm{with}} \hspace{0.1cm} 1\leq p \leq 2, \hspace{0.3cm} \varrho|\bu|^2\in L^\infty(0,T;L^1(\mathsf{B})),\\
&\vartheta \in L^\infty (0,T;L^4(\mathsf{B})), \hspace{0.3cm}\vartheta, \nabla\vartheta, \log\vartheta, \nabla \log\vartheta \in L^2((0,T)\times \mathsf{B}),\\
&\varrho s, \varrho s\bu, \frac{\bq}{\vartheta} \in  L^1((0,T)\times \mathsf{B}) ;
\end{aligned}
\right.
\end{equation}
unknowns $(w, \theta)$ satisfy that 
\begin{equation}\label{fun-spa-2-B}
	\left\{\begin{aligned}
& w \in L^\infty (0,T;H^2(\Gamma))\cap W^{1,\infty}(0,T; L^2(\Gamma)), \\
& \sqrt{\alpha_1}w\in H^1(0,T;H^1(\Gamma)),\hspace{0.3cm} \sqrt{\alpha_2} w \in W^{1,\infty}(0,T;H^1(\Gamma)), \\
& \theta >0\hspace{0.1cm} {\mathrm{a.e.}}\hspace{0.1cm}\mathrm{in}\hspace{0.1cm} \Gamma_T, \hspace{0.3cm} \theta \in L^\infty(0,T;L^2(\Gamma))\cap L^2(0,T;H^1(\Gamma)),\\
&\log \theta \in L^2(0,T;H^s(\Gamma)), \hspace{0.1cm} \mathrm{with} \hspace{0.1cm}0< s\leq \frac{1}{2}.
  \end{aligned}
\right.
\end{equation}

\item The continuity on the interface $\Gamma^w_T$: $w_t \bfn=\bu|_{\Gamma_{w}(t)}$ and $\theta=\vartheta|_{\Gamma_{w}(t)}$ for all $t\in(0,T)$.

\item The continuity equation
\begin{equation}\label{weak-con-B}
\int_0^T\int_{\mathsf{B}}\left(\varrho\partial_t\phi+\varrho\bu\cdot\nabla\phi\right)=-\int_{\mathsf{B}}\varrho_0\phi(0,\cdot)
\end{equation}
holds for all $\phi\in C^1_c([0,T)\times\mathsf{B} )$.

Moreover, the continuity equation holds true in the renormalized sense:
\begin{align}\label{renor-weak-con-B}
\int_0^T \int_{\mathsf{B}} \varrho A(\varrho) \left(\partial_t \varrho +\bu \cdot \nabla \phi  \right) = \int_0^T \int_B a(\varrho) \rmdiv \bu \,  \phi +\int_{\mathsf{B}} \varrho_{0,\delta} A(\rho_{0,\delta}) \phi(0,\cdot),
\end{align}
holds for all $\phi \in C_c^\infty([0,T)\times \mathsf{B})$ and any $a\in L^\infty (0,\infty)\cap C([0,\infty))$ such that $a(0)=0$ with $A(\varrho)=A(1)+\int_1^\varrho \frac{a(z)}{z^2}dz$.

\item The non-resistive magnetic equation
\begin{equation}\label{mag-equ-B}
\int_0^T\int_{\mathsf{B}}\left(b\partial_t\phi+b\bu\cdot\nabla\phi\right)=-\int_{\mathsf{B}}b_0\phi(0,\cdot)
 \end{equation}
 holds for all $\phi\in C^1_c([0,T)\times \mathsf{B})$.

Also, the renormalized equation associated to  \eqref{mag-equ-B} holds in $(0,T)\times B$.  

\item The coupled momentum equations
  \begin{align}\label{mom-equ-B}
 &\int_0^T\int_{\mathsf{B}}\varrho\bu \cdot\partial_t\bmphi+\int_0^T\int_{\mathsf{B}}\left(\varrho\bu\otimes\bu+p_w^\lambda(\varrho,\vartheta)\mathbb{I}_2+{\frac{1}{2}b^2\mathbb{I}_2+\delta (\varrho+b)^\beta\mathbb{I}_2}-\bbS_w^{\omega}(\vartheta,\nabla\bu)\right): \nabla\bmphi\notag\\
 &+\iint_{\Gamma_T}(w_t\psi_t-\Delta w\Delta\psi-\alpha_1\nabla w_t\cdot\nabla\psi +\alpha_2\nabla w_t\cdot\nabla\psi_t+\nabla\theta\cdot\nabla\psi)\\
 &=-\int_{\mathsf{B}}(\varrho\bu)_0\bmphi(0,\cdot)-\int_{\Gamma}v_0\psi(0,\cdot)-\alpha_2\int_{\Gamma}\nabla v_0\cdot\nabla\psi(0,\cdot), \notag
 \end{align}
holds for all $\bmphi\in C^1_c([0,T)\times\mathsf{B}) $ and $\psi\in C_c^\infty([0,T)\times \Gamma)$ such that $\bmphi|_{\Gamma_w(t)}=\psi \bfn$ on $\Gamma_T$, and 
\begin{align*}
&p_w^{\lambda}(\varrho, \vartheta):=p_M(\varrho,\vartheta)+\frac{a^\lambda_{w}}{3}\vartheta^4, \\
&\bbS_w^{\omega}(\vartheta,\nabla\bu)=\mu_{w}^{\omega}(\vartheta)\left(\nabla\bu+\nabla^\top\bu-\rmdiv\bu\mathbb{I}_2\right)+\eta_{w}^{\omega}(\vartheta)\rmdiv\bu\mathbb{I}_2.
\end{align*}

\item The coupled entropy balance
  \begin{align}\label{entropy-ine-B}
&\int_0^T\int_{\mathsf{B}}\left(\varrho s_w^{\lambda}\vphi_t+\varrho s_w^{\lambda} \bu\cdot\nabla\vphi-\frac{\kappa^{\zeta}_{w}(\vartheta)\nabla\vartheta}{\vartheta}\cdot\nabla\vphi\right)+\langle \mathcal{D}_{w}^{\omega,\zeta};\vphi\rangle_{[\mathcal{M};C]([0,T]\times\overline{\mathsf{B}})} \notag\\
&+\xi \int_0^T\int_{\mathsf{B}}\vartheta^4\vphi+\iint_{\Gamma_T}(\theta\tilde\varphi_t-\nabla\theta\cdot\nabla\tilde\varphi+\nabla w\cdot\nabla\tilde\varphi_t)\\
&= -\int_{\mathsf{B}} \varrho_0 s_w^{\lambda}(\varrho_0,\vartheta_0)\vphi(0,\cdot)-\int_{\Gamma}\theta_0\tilde\varphi(0,\cdot)-\int_{\Gamma}\nabla w_0\cdot\nabla\tilde\varphi(0,\cdot), \notag
\end{align}
holds for all non-negative function $\vphi\in C^1_c([0,T)\times\mathsf{B}) $ and $\tilde\varphi \in C_c^\infty([0,T)\times \Gamma)$ such that $\vphi|_{\Gamma_w(t)}=\tilde\vphi$ on $\Gamma_T$ and $\mathcal{D}_{w}^{\omega,\zeta}$ satisfying 
\begin{align}\label{measure-D}
    \mathcal{D}_{w}^{\omega,\zeta} \geq \frac{1}{\vartheta}\left(\bbS_w^{\omega}(\vartheta,\nabla\bu):\nabla\bu+\frac{\kappa^{\zeta}_{w}(\vartheta)|\nabla\vartheta|^2}{\vartheta}\right).
\end{align}

In particular, 
\begin{align*} 
s_w^{\lambda}=s_w^{\lambda}(\varrho,\vartheta)=s_M(\varrho, \vartheta)+\frac{4a^\lambda_{w}}{3}\frac{\vartheta^3}{\varrho}.
\end{align*}

\item The coupled energy inequality
  \begin{align}\label{energy-ine-B}
    &\int_{\mathsf{B}}\bigg[\frac{1}{2}\varrho|\bu|^2+\frac{1}{2}b^2+\varrho e_{w}^{\lambda}(\varrho, \vartheta)+\frac{\delta}{\beta-1}(\varrho+b)^\beta\bigg](t)\notag\\
    &+\int_{\Gamma}\left(\frac{1-\delta}{2}|w_t|^2+\frac{1}{2}|\Delta w|^2+\frac{\alpha_2}{2}|\nabla w_t|^2+\frac{1-\delta}{2}|\theta|^2\right)(t)\notag\\
&+\xi\int_0^t\int_{\mathsf{B}}\vartheta^5+\int_0^t\int_{\Gamma}(\alpha_1|\nabla w_t|^2+|\nabla \theta|^2)\\
    &\leq \int_{\mathsf{B}}\left[\frac{1}{2}\frac{|(\varrho\bu)_{0,\delta}|^2}{\varrho_{0,\delta}}+\frac{1}{2}b_{0,\delta}^2+\varrho_{0,\delta}e_{w}^{\lambda}(\varrho_{0,\delta},\vartheta_{0,\delta})+\frac{\delta}{\beta-1}(\varrho_{0,\delta}+b_{0,\delta})^\beta\right]\notag\\
    &\hspace{0.5cm}+\int_{\Gamma}\left(\frac{1}{2}|v_0|^2+\frac{1}{2}|\Delta w_0|^2+\frac{\alpha_2}{2}|\nabla v_0|^2+\frac{1}{2}|\theta_0|^2\right), \notag
\end{align}
holds for all $t\in [0,T]$. 
  \end{itemize}
\end{definition}

Motivated by the results on moving domains in \cite{Kreml-jmpa-time-dep,KMNW-18}, we have extended the corresponding problem to the fixed domain $\mathsf{B}$. However, it is important to note that, in our case, the displacement $w(t,\bmphi(y))$ is unknown, which distinguishes our work from \cite{Kreml-jmpa-time-dep,KMNW-18}. Indeed, the system in the reference domain remains coupled and depends on the geometry through the {\em continuity condition on the interface} $\Gamma_T$. Consequently, it is not straightforward to decouple the system and solve the fluid part independently. To address this, we employ a decoupling approach based on the``operator splitting method" from \cite{Srjdan-Wang} (see also \cite{BCGTQ-13,MC-13,GGCC-09}, where the splitting method was introduced in the context of FSI) that penalizes the fluid velocity and temperature to enforce the kinematic coupling conditions. Specifically, the time interval $(0,T)$ is divided into $N$ subintervals, each of length $\Delta t = T/N$. The approximate solution is constructed via a time-marching procedure, where, in each time subinterval, we independently solve the fluid and structure subproblems. Furthermore, penalization terms are included to ensure the appropriate exchange of information across the interface $\Gamma_T^w$. As $\Delta t \to 0$, the kinematic coupling conditions are satisfied, and the interface becomes impermeable; therefore, we obtain a weak solution to the extended problem, as defined in Definition \ref{weak-solutions-on-fixed-domain}.

\subsection{The splitting scheme}

We split the time interval $(0,T)$ into $N \in \mathbb{N}$ subintervals of length $\Delta t = \frac{T}{N}$, and define $t_n = n \Delta t$ such that
\begin{equation}\label{interval}
(0,T)=\bigcup_{n=0}^{N-1}(t^n,t^{n+1}) 
\end{equation}
We then decompose the fluid-structure system $\eqref{shell}$, $\eqref{full-MHD}$ on the fixed domain $\mathsf{B}$ into two subproblems: one for the fluid and the other for the structure, solving the semi-discretized problems for both at each time step. This approach has proven to be a promising method for addressing coupled fluid-structure interaction systems (see \cite{CGM-20, MC-16-2, MC-16, MS-22}).
 
The system is decomposed in the coupled momentum equation \eqref{mom-equ-B} and the coupled entropy inequality \eqref{entropy-ine-B}. It is important to note that the kinematic condition $\eqref{VT-con}$ cannot be directly guaranteed. To address this, we introduce two unknowns, $\mathbf{v}$ and $\tilde{\vartheta}$, representing the traces of the fluid velocity and temperature on the interface $\Gamma_w(t)$. Specifically, $\mathbf{v} := \mathbf{u}|_{\Gamma_w(t)}$ and $\tilde{\vartheta} := \vartheta|_{\Gamma_w(t)}$.

It is important to note that the kinematic coupling conditions are not satisfied at the level of approximate solutions; that is, in general, $\mathbf{v} \neq \partial_t w \mathbf{n}$ and $\tilde{\vartheta} \neq \theta$. However, the penalization terms (which will be included in the decoupled equations) will ensure that $\eqref{VT-con}$ holds as $\Delta t \to 0$.

\subsection{The sub-problems} 
Before presenting the approximation scheme of the sub-problems for fluid and structure, we introduce the time-shift function denoted by $\tau_{\Delta t}f(t)$ for any function $f(t)$ as follows:
\begin{equation}\label{time-shift}
\tau_{\Delta t}f(t): =\left\{
\begin{aligned}
&f(t-\Delta t), \quad t\in [(n-1)\Delta t, n\Delta t], \ n\geq 1,\\
&f(0), \quad t\in[0,\Delta t].
\end{aligned}\right.
\end{equation}


\subsubsection{\bf{The sub-problem of structure}} The  structure sub-problem in $[n\Delta t, (n+1)\Delta t]\times \Gamma$ has three parts: 
\begin{itemize}
\item[(i)] the structure part of the momentum equation \eqref{mom-equ-B};
\item[(ii)] the structure part of the entropy balance \eqref{entropy-ine-B}; 
\item[(iii)] the structure part of the coupled energy inequality \eqref{energy-ine-B}.  
\end{itemize}

We now use induction process on $n\geq 0$. \\

 {\underline{\em Case $n=0$}}: Suppose that 
\begin{equation}\label{caae0}
  \begin{split}
   & w^0(0,\cdot):=w_0,\hspace{0.3cm}w^0_t(0,\cdot):=v_0, \hspace{0.3cm}\theta^0(0,\cdot):=\theta_0,\\
&\mathrm{and} \quad \bfv^0(t,\cdot):=v_0\bfn, \hspace{0.3cm} \tilde{\vartheta}^0(t,\cdot):=\theta_0,\hspace{0.3cm} \mathrm{for}\hspace{0.1cm} t\in[-\Delta t, 0] .
\end{split}
\end{equation}

\underline{\em Case $n \geq 1$}: Assume that the solution $(w^n, \theta^n)$ to structure sub-problem and the solution  \\
$(\varrho^n, b^n, \bu^n, \vartheta^n)$ to fluid sub-problem are already derived. Then we want to find $(w^{n+1}, \theta^{n+1})$ satisfying:
\begin{itemize}
  \item  Function space:
  \begin{equation}\label{dis-function-space}
    \left\{\begin{aligned}
&w^{n+1}\in W^{1,\infty}(n\Delta t, (n+1)\Delta t; L^2(\Gamma))\cap L^\infty(n\Delta t, (n+1)\Delta t; H^2(\Gamma)),\\
&\sqrt{\alpha_1}w^{n+1}\in H^1(n\Delta t,(n+1)\Delta t;H^1(\Gamma)),\\
&\sqrt{\alpha_2} w^{n+1}\in W^{1,\infty}(n\Delta t,(n+1)\Delta t;H^1(\Gamma)),\\
&\theta^{n+1}\in L^\infty(n\Delta t,(n+1)\Delta t; L^2(\Gamma))\cap L^2(n\Delta t,(n+1)\Delta t; H^1(\Gamma)).
  \end{aligned}
  \right.
\end{equation}

\item It holds that 
\begin{equation*}
  w^{n+1}(n\Delta t, \cdot)=w^n(n\Delta t, \cdot), \quad  w_t^{n+1}(n\Delta t, \cdot)=w_t^n(n\Delta t, \cdot),  \quad \theta^{n+1}(n\Delta t, \cdot)=\theta^n(n\Delta t, \cdot),
\end{equation*}
in the sense of weakly continuous  in time. 

\item  The weak formulation for the plate equation is
\begin{align}\label{weak-plate-function}
  &(1-\delta)\int_{n\Delta t}^{(n+1)\Delta t}\int_\Gamma w^{n+1}_t\psi_t-\delta \int_{n\Delta t}^{(n+1)\Delta t}\int_\Gamma \frac{w_t^{n+1}-\tau_{\Delta t}\bfv^{n}\cdot\bfn}{\Delta t}\psi\notag\\
  &-\int_{n\Delta t}^{(n+1)\Delta t}\int_{\Gamma}(\Delta w^{n+1}\Delta \psi +\alpha_1\nabla w_t^{n+1}\cdot\nabla\psi)\\
  &+\int_{n\Delta t}^{(n+1)\Delta t}\int_{\Gamma}(\alpha_2\nabla w^{n+1}_t\cdot\nabla\psi_t+\nabla\theta^{n+1}\cdot\nabla\psi)\notag\\
  &=(1-\delta)\int_{n\Delta t}^{(n+1)\Delta t}\frac{\mathrm{d}}{\mathrm{d}t}\int_{\Gamma}w_t^{n+1}\psi+\alpha_2\int_{n\Delta t}^{(n+1)\Delta t}\frac{\mathrm{d}}{\mathrm{d}t}\int_{\Gamma}\nabla w^{n+1}_t\cdot\nabla\psi,\notag
\end{align}
holds for all $\psi \in C^\infty([n\Delta t, (n+1)\Delta t]\times\Gamma)$. 

\item  The weak formulation for the heat equation is 
  \begin{align}\label{weak-heat-function}
&(1-\delta)\int_{n\Delta t}^{(n+1)\Delta t}\int_{\Gamma}\theta^{n+1}\tilde \vphi_t-\delta\int_{n\Delta t}^{(n+1)\Delta t}\int_{\Gamma}\frac{\theta^{n+1}-\tau_{\Delta t}\tilde{\vartheta}^n}{\Delta t}\tilde \vphi \notag\\
&-\int_{n\Delta t}^{(n+1)\Delta t}\int_{\Gamma}(\nabla \theta^{n+1}\cdot\nabla \tilde \vphi+\nabla w^{n+1}_t\cdot\nabla\tilde \vphi)\\
&=(1-\delta)\int_{n\Delta t}^{(n+1)\Delta t}\frac{\mathrm{d}}{\mathrm{d}t}\int_{\Gamma}\theta^{n+1}\tilde \vphi, \notag
\end{align}
holds for all $\tilde \vphi\in C^\infty([n\Delta t, (n+1)\Delta t]\times\Gamma)$. 

\item The energy inequality for structure is
  \begin{align}\label{weak-energy-structure}
    &\frac{\delta}{2\Delta t}\int_{n\Delta t}^t\left(\|w^{n+1}_t-\tau_{\Delta t}\bfv^{n}\cdot\bfn\|^2_{L^2(\Gamma)}+\|w^{n+1}_t\|_{L^2(\Gamma)}\right)+\frac{1-\delta}{2}\|w_t^{n+1}\|^2_{L^2(\Gamma)}(t)\notag\\
    &+\frac{\delta}{2\Delta t}\int_{n\Delta t}^t\left(\|\theta^{n+1}-\tau_{\Delta t}\tilde{\vartheta}^{n}\|^2_{L^2(\Gamma)}+\|\theta^{n+1}\|_{L^2(\Gamma)}\right)+\frac{1-\delta}{2}\|\theta^{n+1}\|^2_{L^2(\Gamma)}(t)\notag\\
    &+\frac{1}{2}\|\Delta w^{n+1}\|^2_{L^2(\Gamma)}(t)+\frac{\alpha_2}{2}\|\nabla w_t^{n+1}\|^2_{L^2(\Gamma)}(t)\\
    &+\int_{n\Delta t}^t\left(\alpha_1\|\nabla w_t^{n+1}\|^2_{L^2(\Gamma)}+\|\nabla\theta^{n+1}\|^2_{L^2(\Gamma)}\right)\notag\\
    &\leq \frac{1-\delta}{2}\|w_t^{n+1}\|^2_{L^2(\Gamma)}(n\Delta t)+\frac{1}{2}\|\Delta w^{n+1}\|^2_{L^2(\Gamma)}(n\Delta t)+\frac{\alpha_2}{2}\|\nabla w_t^{n+1}\|^2_{L^2(\Gamma)}(n\Delta t)\notag\\
    &\hspace{0.5cm}+\frac{1-\delta}{2}\|\theta^{n+1}\|^2_{L^2(\Gamma)}(n\Delta t)+\frac{\delta}{2\Delta t}\int_{n\Delta t}^t\left(\|\tau_{\Delta t}\bfv ^{n}\|_{L^2(\Gamma)}^2+\|\tau_{\Delta t}\tilde{\vartheta} ^{n}\|_{L^2(\Gamma)}^2\right),\notag
\end{align}
holds for all $t\in (n\Delta t, (n+1)\Delta t]$.
\end{itemize}

\subsubsection{\bf{The sub-problem of fluid}}
The  fluid  sub-problem in $[n\Delta t, (n+1)\Delta t]\times \mathsf{B}$ has five  parts: 
\begin{itemize}
\item[(i)] the renormalized continuity equation \eqref{renor-weak-con-B};
\item[(ii)] the magnetic equation \eqref{mag-equ-B} (in the sense of renormalized sense);  
\item[(iii)] the fluid part of the momentum equation \eqref{mom-equ-B};
\item[(iv)] the fluid part of the entropy balance \eqref{entropy-ine-B}; 
\item[(v)] the fluid  part of the coupled energy inequality \eqref{energy-ine-B}.  
\end{itemize}

\vspace*{.2cm}
We again use the induction process  on $n\geq 0$. \\

\underline{\em Case $n=0$}: We suppose 
  \begin{align*}
\varrho^0(0,\cdot):=\varrho_{0,\delta}(\cdot), \hspace{0.2cm}b^0(0,\cdot):=b_{0,\delta}(\cdot),\hspace{0.2cm} (\varrho\bu)^0(0,\cdot):=(\varrho\bu)_{0,\delta}(\cdot),\hspace{0.2cm}(\varrho s(\varrho,\vartheta))^0(0,\cdot):=\varrho_{0,\delta}s(\varrho_{0,\delta}, \vartheta_{0,\delta}) .
\end{align*}

\underline{\em Case $n\geq 1$}: Suppose that the solution $(w^{n+1}, \theta^{n+1})$ to structure sub-problem and solution 
$ ( \varrho^n, b^n, \bu^n, \vartheta^n ) $ to fluid sub-problem  are already derived. Then our aim is to find $(\varrho^{n+1}, b^{n+1}, \bu^{n+1}, \vartheta^{n+1})$ satisfying: 
\begin{itemize}
  \item Function space:
\begin{equation*}
  \left\{\begin{aligned}
&\varrho^{n+1}\geq  0, \hspace{0.3cm}\varrho^{n+1}\in L^{\infty}(n\Delta t, (n+1)\Delta t; L^\beta(\mathsf{B})),\\
&b^{n+1}\geq  0,\hspace{0.3cm} b^{n+1}\in L^{\infty}(n\Delta t, (n+1)\Delta t; L^\beta(\mathsf{B})),\\
&\bu^{n+1}, \nabla\bu^{n+1} \in L^2([n\Delta t,(n+1)\Delta t]\times\mathsf{B}), \hspace{0.3cm} \varrho^{n+1}\bu^{n+1}\in L^\infty(n\Delta t,(n+1)\Delta t;L^1(\mathsf{B})),\\
&\vartheta^{n+1}>0, \hspace{0.1cm}\mathrm{in}\hspace{0.1cm} [n\Delta t,(n+1)\Delta t]\times\mathsf{B}, \hspace{0.3cm} \vartheta^{n+1}\in L^\infty(n\Delta t, (n+1)\Delta t; L^4(\mathsf{B})),\\
&\vartheta^{n+1}, \nabla\vartheta^{n+1}\in L^2([n\Delta t,(n+1)\Delta t]\times\mathsf{B}), \hspace{0.3cm}\log\vartheta^{n+1}, \nabla\log\vartheta^{n+1} \in L^2([n\Delta t,(n+1)\Delta t]\times\mathsf{B}),\\
&\varrho^{n+1}s(\varrho^{n+1},\vartheta^{n+1}), \varrho^{n+1}s(\varrho^{n+1},\vartheta^{n+1})\bu^{n+1}, \frac{\bq(\vartheta^{n+1})}{\vartheta^{n+1}}\in L^1([n\Delta t,(n+1)\Delta t]\times\mathsf{B}).
\end{aligned}
\right.
\end{equation*}

\item It holds that  
\begin{align*}
  \varrho^{n+1}(n\Delta t)=\varrho^n(n\Delta t), \ b^{n+1}(n\Delta t)=b^n(n\Delta t), \ \varrho^{n+1}\bu^{n+1}(n\Delta t)=\varrho^n\bu^n(n\Delta t),
\end{align*}
in the sense of  weakly continuous  in time.

\item The continuity equation 
\begin{align}\label{weak-continuity}
\int_{n\Delta t}^{(n+1)\Delta t}\int_{\mathsf{B}}\varrho^{n+1}(\phi_t+\bu^{n+1}\cdot\nabla\phi)=-\int_{n\Delta t}^{(n+1)\Delta t}\frac{\mathrm{d}}{\mathrm{d}t}\int_{\mathsf{B}}\varrho^{n+1}\phi, 
\end{align}
holds for all $\phi\in C^\infty ([n\Delta t,(n+1)\Delta t]\times\mathsf{B})$. 
                                       
Here, the renormalized continuity equation takes the form:
\begin{align}\label{weak-renor-cont-ind} 
&\int_{n\Delta t}^{(n+1)\Delta t} \int_{\mathsf{B}} \varrho^{n+1} A(\varrho^{n+1})( \partial_t \phi +\bu^{n+1}\cdot \nabla \phi) \nonumber \\
&= \int_{n\Delta t}^{(n+1)\Delta t} \int_{\mathsf{B}} a(\varrho^{n+1}) \rmdiv \bu^{n+1} \phi - \int_{n\Delta t}^{(n+1)\Delta t}\frac{d}{dt}\int_{\mathsf{B}} \varrho^{n+1} A(\rho^{n+1}) \phi , 
\end{align}
holds for all $\phi \in C^\infty([n\Delta t,(n+1)\Delta t]\times \mathbb{R}^3)$ and any $a\in L^{\infty}(0,\infty) \cap C([0,\infty))$ such that $a(0)=0$ with $A(x)=A(1) + \int_1^x \frac{a(z)}{z^2}dz$.

\item The non-resistive magnetic equation 
\begin{align}\label{weak-magnetic-induction}
  \int_{n\Delta t}^{(n+1)\Delta t}\int_{\mathsf{B}}b^{n+1}(\phi_t+\bu^{n+1}\cdot\nabla\phi)=-\int_{n\Delta t}^{(n+1)\Delta t}\frac{\mathrm{d}}{\mathrm{d}t}\int_{\mathsf{B}}b^{n+1}\phi, 
  \end{align}
  holds for all $\phi\in C^\infty ([n\Delta t,(n+1)\Delta t]\times\mathsf{B})$. 

The renormalized equation also holds true for \eqref{weak-magnetic-induction} (similar to  the continuity equation \eqref{weak-renor-cont-ind}).  

  \item The momentum equations 
    \begin{align}\label{sub-fluid-momentum-equ}
    &\int_{n\Delta t}^{(n+1)\Delta t}\int_{\mathsf{B}}\varrho^{n+1}\bu^{n+1}\cdot\bmphi_t+ \int_{n\Delta t}^{(n+1)\Delta t}\int_{\mathsf{B}}\bigg(\varrho^{n+1}\bu^{n+1}\otimes\bu^{n+1}
   \notag \\
    &+p_{w^{n+1}}^{\lambda}(\varrho^{n+1},\vartheta^{n+1})\mathbb{I}_2+\frac{1}{2}(b^{n+1})^2\mathbb{I}_2+\delta (\varrho^{n+1}+b^{n+1})^\beta\mathbb{I}_2-\bbS_{w^{n+1}}^{\omega}(\vartheta^{n+1}, \nabla\bu^{n+1})\bigg):\nabla\bmphi\notag \\
    &-\delta \int_{n\Delta t}^{(n+1)\Delta t}\int_{\Gamma}\frac{\bfv^{n+1}-w_t^{n+1}\bfn}{\Delta t}\cdot\bmpsi\\
    &=\int_{n\Delta t}^{(n+1)\Delta t}\frac{\mathrm{d}}{\mathrm{d}t}\int_{\mathsf{B}}\varrho^{n+1}\bu^{n+1}\cdot\bmphi,\notag 
    \end{align}
holds for all $\bmphi \in C^\infty([n\Delta t,(n+1)\Delta t]\times\mathsf{B})$ and $\bmpsi\in C^\infty([n\Delta t,(n+1)\Delta t]\times\Gamma)$ such that $\bmphi|_{\Gamma_{w^{n+1}}}=\bmpsi$, where 
  \begin{align*}
&\bfv^{n+1}:=\bu^{n+1}|_{\Gamma_{w^{n+1}}}, \quad p_{w^{n+1}}^{\lambda}(\varrho^{n+1}, \vartheta^{n+1}):=p_M(\varrho^{n+1},\vartheta^{n+1})+\frac{a^\lambda_{w^{n+1}}}{3}(\vartheta^{n+1})^4, \\
&\bbS_{w^{n+1}}^{\omega}(\vartheta^{n+1},\nabla\bu^{n+1}):=\mu_{w^{n+1}}^{\omega}(\vartheta^{n+1})\left(\nabla\bu^{n+1}+\nabla^\top\bu^{n+1}-\rmdiv\bu^{n+1}\mathbb{I}_2\right)+\eta_{w^{n+1}}^{\omega}(\vartheta^{n+1})\rmdiv\bu^{n+1}\mathbb{I}_2.
\end{align*}

\item The entropy inequality 
  \begin{align}\label{sub-entropy-equ}
    &\int_{n\Delta t}^{(n+1)\Delta t}\int_{\mathsf{B}}\left(\varrho^{n+1}s_{w^{n+1}}^{\lambda, n+1}\vphi_t+\varrho^{n+1} s_{w^{n+1}}^{\lambda,n+1} \bu^{n+1}\cdot\nabla\vphi-\frac{\kappa_{w^{n+1}}^{\zeta}(\vartheta^{n+1})\nabla\vartheta^{n+1}}{\vartheta^{n+1}}\cdot\nabla\vphi\right)\notag\\
    &+\langle \mathcal{D}_{w^{n+1}}^{\omega,\zeta, n+1};\vphi\rangle_{[\mathcal{M};C]([n\Delta t,(n+1)\Delta t]\times\overline{\mathsf{B}})} \notag\\
    &+\xi \int_{n\Delta t}^{(n+1)\Delta t}\int_{\mathsf{B}}(\vartheta^{n+1})^4\vphi-\delta \int_{n\Delta t}^{(n+1)\Delta t}\int_{\Gamma}\frac{\tilde{\vartheta}^{n+1}-\theta^{n+1}}{\Delta t}\tilde\vphi\\
    &=\int_{n\Delta t}^{(n+1)\Delta t}\frac{\mathrm{d}}{\mathrm{d}t}\int_{\mathsf{B}}\varrho^{n+1}s^{\lambda, n+1} \vphi,\notag
\end{align}
holds for all $\vphi \in C^\infty([n\Delta t,(n+1)\Delta t]\times\mathsf{B})$ and $\tilde \vphi\in C^\infty([n\Delta t,(n+1)\Delta t]\times \Gamma)$ such that $\vphi|_{\Gamma_{w^{n+1}}}=\tilde \vphi$, $\vartheta^{n+1}|_{\Gamma_{w^{n+1}}}=\tilde{\vartheta}^{n+1}$, and 
\begin{equation*}
\mathcal{D}_{w^{n+1}}^{\omega, \zeta, n+1}\geq \frac{1}{\vartheta^{n+1}}\left(\bbS_{w^{n+1}}^{\omega}(\vartheta^{n+1},\nabla\bu^{n+1}):\nabla\bu^{n+1}+\frac{\kappa_{w^{n+1}}^{\zeta}(\vartheta^{n+1})|\nabla\vartheta^{n+1}|^2}{\vartheta^{n+1}}\right).
\end{equation*}
Furthermore, the approximate entropy is given by 
\begin{align*}
s_{w^{n+1}}^{\lambda,n+1}:=s_{w^{n+1}}^{\lambda}(\varrho^{n+1},\vartheta^{n+1}):=s_M(\varrho^{n+1}, \vartheta^{n+1})+\frac{4a^\lambda_{w^{n+1}}}{3}\frac{(\vartheta^{n+1})^3}{\varrho^{n+1}}.
\end{align*}

\item The energy inequality 
  \begin{align}\label{sub-fluid-energy-equ}
&\int_{\mathsf{B}}\bigg[\frac{1}{2}\varrho^{n+1}|\bu^{n+1}|^2+\varrho^{n+1}e^{\lambda}_{w^{n+1}}(\varrho^{n+1}, \vartheta^{n+1})+\frac{1}{2}(b^{n+1})^2+\frac{\delta}{\beta-1}(\varrho^{n+1}+b^{n+1})^\beta\bigg](t)\notag\\
&+\frac{\delta}{2\Delta t}\int_{n\Delta t}^t\int_{\Gamma}\left(|\bfv^{n+1}-w_t^{n+1}\bfn|^2+|\bfv^{n+1}|^2+|\tilde{\vartheta}^{n+1}-\theta^{n+1}|^2+|\tilde{\vartheta}^{n+1}|^2\right)\notag\\
&+\xi \int_{n\Delta t}^t\int_{\mathsf{B}}(\vartheta^{n+1})^5\\
&\leq \int_{\mathsf{B}}\bigg[\frac{1}{2}\varrho^{n}|\bu^{n}|^2+\varrho^{n}e^{\lambda}_{w^{n}}(\varrho^{n}, \theta^{n})+\frac{1}{2}(b^{n})^2+\frac{\delta}{\beta-1}(\varrho^{n}+b^n)^\beta\bigg](n\Delta t)\notag\\
&\hspace{0.5cm}+\frac{\delta}{2\Delta t}\int_{n\Delta t}^t\int_{\Gamma}(|w_{t}^{n+1}|^2+|\theta^{n+1}|^2),\notag
\end{align}
holds for any $t\in(n\Delta t, (n+1)\Delta t]$, where the approximate energy is given by
\begin{align*}
e^{\lambda}_{w^{n+1}}(\varrho^{n+1},\vartheta^{n+1}):=e_M(\varrho^{n+1},\vartheta^{n+1})+\frac{a^{\lambda}_{w^{n+1}}}{\varrho^{n+1}}(\vartheta^{n+1})^4.
\end{align*}

\item The total energy inequality
  \begin{align}\label{sub-total-energy-inequality}
&\int_{\mathsf{B}}\bigg[\frac{1}{2}\varrho^{n+1}|\bu^{n+1}|^2+\frac{1}{2}(b^{n+1})^2+\mathcal{H}_{\lambda}(\varrho^{n+1}, \vartheta^{n+1})-\frac{\mathcal{H}_{\lambda}(\bar{\varrho}, 1)}{\partial \varrho}(\varrho^{n+1}-\bar{\varrho})-\mathcal{H}_{\lambda}(\bar{\varrho}, 1)\notag\\
&+\frac{\delta}{\beta-1}(\varrho^{n+1}+b^{n+1})^\beta\bigg](t)\notag\\
&+\int_{n\Delta t}^t\int_{\mathsf{B}}\frac{1}{\vartheta^{n+1}}\left(\bbS_{w^{n+1}}^{\omega}(\vartheta^{n+1},\nabla\bu^{n+1}):\nabla\bu^{n+1}+\frac{\kappa_{w^{n+1}}^{\zeta}(\vartheta^{n+1})|\nabla\vartheta^{n+1}|^2}{\vartheta^{n+1}}\right)\notag\\
&+\frac{\delta}{2\Delta t}\int_{n\Delta t}^t\int_{\Gamma}\left(|\bfv^{n+1}-w_t^{n+1}\bfn|^2+|\bfv^{n+1}|^2+|\tilde{\vartheta}^{n+1}-\theta^{n+1}|^2+|\tilde{\vartheta}^{n+1}|^2\right)\\
&+\xi \int_{n\Delta t}^t\int_{\mathsf{B}}(\vartheta^{n+1})^5\notag\\
&\leq \int_{\mathsf{B}}\bigg[\frac{1}{2}\varrho^{n}|\bu^{n}|^2+\frac{1}{2}(b^{n})^2+\mathcal{H}_{\lambda}(\varrho^{n}, \vartheta^{n})-\frac{\mathcal{H}_{\lambda}(\bar{\varrho}, 1)}{\partial \varrho}(\varrho^{n}-\bar{\varrho})-\mathcal{H}_{\lambda}(\bar{\varrho}, 1)\notag\\
&\hspace{0.5cm}+\frac{\delta}{\beta-1}(\varrho^{n}+b^n)^\beta\bigg](n\Delta t)\notag\\
&\hspace{0.5cm}+\frac{\delta}{2\Delta t}\int_{n\Delta t}^t\int_{\Gamma}(|w_t^{n+1}|^2+|\theta^{n+1}|^2)+\delta \int_{n\Delta t}^t\int_{\Gamma}\frac{\tilde{\vartheta}^{n+1}-\theta^{n+1}}{\Delta t}, \notag
\end{align}
holds for any $t\in (n\Delta t, (n+1)\Delta t]$, where the approximate Helmholtz function $\mathcal{H}_{\lambda}$ is defined as 
\begin{equation*}
    \mathcal{H}_{\lambda}(\varrho^{n+1},\vartheta^{n+1})=\varrho^{n+1}\left(e^{\lambda}_{w^{n+1}}(\varrho^{n+1},\vartheta^{n+1})-\bar{\vartheta}s_{w^{n+1}}^{\lambda}(\varrho^{n+1},\vartheta^{n+1})\right), 
\end{equation*}
for some given constant $\bar{\vartheta}>0$, 
and the constant $\bar{\varrho}$ satisfying $\int_{\mathsf{B}}(\varrho-\bar{\varrho})=0$.
\end{itemize}

\subsection{Solving the sub-problems}
In this subsection, we will demonstrate how to solve the sub-problems and explain why all estimates are satisfied.

By modifying the method presented in \cite{MC-16} to include penalization terms, we can solve the sub-problem for the structure. Specifically, by choosing $\psi=w_t^{n+1}$ in \eqref{weak-plate-function} and $\tilde\vphi=\theta^{n+1}$ in $\eqref{weak-heat-function}$, we can derive $\eqref{weak-energy-structure}$ where we have used the following facts:
\begin{align*}
&\delta \int_{n\Delta t}^{ (n+1)\Delta t}\int_\Gamma \frac{w_t^{n+1}-\tau_{\Delta t}\bfv^{n}\cdot\bfn}{\Delta t}w^{n+1}_t\\
&=\frac{\delta}{2\Delta t}\int_{n\Delta t}^{(n+1)\Delta t}\left(\|w^{n+1}_t-\tau_{\Delta t}\bfv^{n}\cdot\bfn\|^2_{L^2(\Gamma)}+\|w^{n+1}_t\|_{L^2(\Gamma)}\right)-\frac{\delta}{2\Delta t}\int_{n\Delta t}^{(n+1)\Delta t}\|\tau_{\Delta t}\bfv^{n}\cdot\bfn\|^2_{L^2(\Gamma)},
\end{align*}
and 
\begin{align*}
&\delta \int_{n\Delta t}^{(n+1)\Delta t }\int_\Gamma \frac{\theta^{n+1}-\tau_{\Delta t}\tilde\vartheta^{n}}{\Delta t}\theta^{n+1}\\
&=\frac{\delta}{2\Delta t}\int_{n\Delta t}^{(n+1)\Delta t}\left(\|\theta^{n+1}-\tau_{\Delta t}\tilde\vartheta^{n}\|^2_{L^2(\Gamma)}+\|\theta^{n+1}\|_{L^2(\Gamma)}\right)-\frac{\delta}{2\Delta t}\int_{n\Delta t}^{(n+1)\Delta t}\|\tau_{\Delta t}\tilde\vartheta^{n}\|^2_{L^2(\Gamma)}.
\end{align*}

Regarding the existence of a solution to the sub-problem for the fluid, the initial step in our approach is to consider the existence in the fixed domain $\mathsf{B}$. Therefore, the proof of existence itself does not significantly deviate from that in \cite{LS-21}, and we provide a brief explanation to highlight the differences. There are two penalization terms 
$\displaystyle -\delta \int_{n\Delta t}^{(n+1)\Delta t}\int_{\Gamma}\frac{\bfv^{n+1}-w_t^{n+1}\bfn}{\Delta t}\cdot\bmpsi$ and $\displaystyle -\delta \int_{n\Delta t}^{(n+1)\Delta t}\int_{\Gamma}\frac{\tilde{\vartheta}^{n+1}-\theta^{n+1}}{\Delta t}\tilde \vphi$ in momentum equation $\eqref{sub-fluid-momentum-equ}$ and entropy equation $\eqref{sub-entropy-equ}$, separately. For fixed $\Delta t$, these terms can be effectively managed by treating them as compact perturbations.

\subsection{The uniform bounds for the approximated solutions}

Suppose that the sub-problems for fluid and structure have already solved in time interval $[0, m\Delta t]$ with $1\leq m\leq N-1$, then we aim to derive the uniform bounds for these solutions. Note that we omit the superscript $n+1$ of the solutions in the following analysis for simplicity.

For $n\leq m-2$, we sum $\eqref{weak-energy-structure}$ and $\eqref{sub-fluid-energy-equ}$ at times $(n+1)\Delta t$ over $n=1,...,m-2$, and add $\eqref{weak-energy-structure}$ and $\eqref{sub-fluid-energy-equ}$ at $t\in [(m-1)\Delta t, m\Delta t]$
to get 
\begin{align*}
    &\int_{\mathsf{B}}\left[\frac{1}{2}\varrho|\bu|^2+\varrho e^\lambda_w(\varrho,\theta)+\frac{1}{2}b^2+\frac{\delta}{\beta-1}(\varrho+b)^\beta\right](t)\\
    &+\frac{1-\delta}{2}\|w_t\|^2_{L^2(\Gamma)}(t)+\frac{1}{2}\|\Delta w\|^2_{L^2(\Gamma)}(t)+\frac{\alpha_2}{2}\|\nabla w_t\|^2_{L^2(\Gamma)}(t)+\frac{1-\delta}{2}\|\theta\|^2_{L^2(\Gamma)}(t)\\
&+\xi\int_0^t\int_{\mathsf{B}}\vartheta^5+\int_0^t(\alpha_1\|\nabla w_t\|^2_{L^2(\Gamma)}+\|\nabla\theta\|^2_{L^2(\Gamma)})\\
    &+\frac{\delta}{2\Delta t}\int^t_{(m-1)\Delta t}\|\tau_{\Delta t}\bfv\|^2_{L^2(\Gamma)}+\frac{\delta}{2\Delta t}\int_0^t(\|\bfv-w_t\bfn\|^2_{L^2(\Gamma)}+\|w_t-\tau_{\Delta t}\bfv\cdot\bfn\|^2_{L^2(\Gamma)})\\
    &+\frac{\delta}{2\Delta t}\int^t_{(m-1)\Delta t}\|\tau_{\Delta t}\tilde{\vartheta}\|^2_{L^2(\Gamma)}+\frac{\delta}{2\Delta t}\int_0^t(\|\tilde{\vartheta}-\theta\|^2_{L^2(\Gamma)}+\|\theta-\tau_{\Delta t}\tilde{\vartheta}\|^2_{L^2(\Gamma)})\\
    &\leq \int_{\mathsf{B}}\left[\frac{1}{2}\frac{|(\varrho\bu)_{0,\delta}|^2}{\varrho_{0,\delta}}+\varrho_{0,\delta}e^\lambda_w(\varrho_{0,\delta}, \vartheta_{0,\delta})+\frac{1}{2}b_{0,\delta}^2+\frac{\delta}{\beta-1}(\varrho_{0,\delta}+b_{0,\delta})^\beta\right]\\
    &\hspace{0.5cm}+\frac{1}{2}\|v_{0,\delta}\|^2_{L^2(\Gamma)}+\frac{1}{2}\|\Delta w_{0,\delta}\|^2_{L^2(\Gamma)}+\frac{\alpha_2}{2}\|\nabla v_{0,\delta}\|^2_{L^2(\Gamma)}+\frac{1}{2}\|\theta_{0,\delta}\|^2_{L^2(\Gamma)}.
  \end{align*}
  
Performing the same procedures for $\eqref{sub-total-energy-inequality}$, we obtain   
  \begin{align}\label{Delta-t-total-energy}
    &\int_{\mathsf{B}}\bigg[\frac{1}{2}\varrho|\bu|^2+\frac{1}{2}b^2+\mathcal{H}_\lambda(\varrho,\vartheta)-\frac{\partial \mathcal{H}_{\lambda}(\bar{\varrho},1)}{\partial \varrho}(\varrho-\bar{\varrho})-\mathcal{H}_{\lambda}(\bar{\varrho}, 1)+\frac{\delta}{\beta-1}(\varrho+b)^\beta\bigg](t)\notag\\
    &+\frac{1-\delta}{2}\|w_t\|^2_{L^2(\Gamma)}(t)+\frac{1}{2}\|\Delta w\|^2_{L^2(\Gamma)}(t)+\frac{\alpha_2}{2}\|\nabla w_t\|^2_{L^2(\Gamma)}(t)+\frac{1-\delta}{2}\|\theta\|^2_{L^2(\Gamma)}(t)\notag\\
    &+\int_0^t\int_{\mathsf{B}}\frac{1}{\vartheta}\left(\bbS_w^\omega(\vartheta,\nabla\bu):\nabla\bu+\frac{\kappa_w^\zeta(\vartheta)|\nabla\vartheta|^2}{\vartheta}\right)\notag\\
&+\xi\int_0^t\int_{\mathsf{B}}\vartheta^5+\int_0^t(\alpha_1\|\nabla w_t\|^2_{L^2(\Gamma)}+\|\nabla\theta\|^2_{L^2(\Gamma)})\\
    &+\frac{\delta}{2\Delta t}\int^t_{(m-1)\Delta t}\|\tau_{\Delta t}\bfv\|^2_{L^2(\Gamma)}+\frac{\delta}{2\Delta t}\int_0^t(\|\bfv-w_t\bfn\|^2_{L^2(\Gamma)}+\|w_t-\tau_{\Delta t}\bfv\cdot\bfn\|^2_{L^2(\Gamma)})\notag\\
    &+\frac{\delta}{2\Delta t}\int^t_{(m-1)\Delta t}\|\tau_{\Delta t}\tilde{\vartheta}\|^2_{L^2(\Gamma)}+\frac{\delta}{2\Delta t}\int_0^t(\|\tilde{\vartheta}-\theta\|^2_{L^2(\Gamma)}+\|\theta-\tau_{\Delta t}\tilde{\vartheta}\|^2_{L^2(\Gamma)})\notag\\
    &\leq \int_{\mathsf{B}}\bigg[\frac{1}{2}\frac{|(\varrho\bu)_{0,\delta}|^2}{\varrho_{0,\delta}}+\frac{1}{2}b_{0,\delta}^2+\mathcal{H}_\lambda(\varrho_{0,\delta},\vartheta_{0,\delta})-\frac{\partial \mathcal{H}_\lambda(\bar{\varrho},1)}{\partial \varrho}(\varrho_{0,\delta}-\bar{\varrho})-\mathcal{H}_\lambda(\bar{\varrho}, 1)\notag\\
    &\hspace{0.5cm}{+\frac{\delta}{\beta-1}(\varrho_{0,\delta}+b_{0,\delta})^\beta}\bigg]\notag\\
    &+\frac{1}{2}\|v_{0,\delta}\|^2_{L^2(\Gamma)}+\frac{1}{2}\|\Delta w_{0,\delta}\|^2_{L^2(\Gamma)}+\frac{\alpha_2}{2}\|\nabla v_{0,\delta}\|^2_{L^2(\Gamma)}+\frac{1}{2}\|\theta_{0,\delta}\|^2_{L^2(\Gamma)}\notag\\
    &+\frac{\delta }{4\Delta t}\int_0^t\|\tau_{\Delta t}\tilde{\vartheta}-\theta\|^2_{L^2(\Gamma)}+\delta C (\Gamma)+\delta\int_0^t\int_{\Gamma}\frac{\tilde{\vartheta}-\tau_{\Delta t}\tilde{\vartheta}}{\Delta t}.\notag
  \end{align}

 The last three terms in \eqref{Delta-t-total-energy} come from  
\begin{align*}
&\delta \sum_{n}\int_{n\Delta t}^{(n+1)\Delta t}\int_{\Gamma}\frac{{\tilde\vartheta}^{n+1}-\theta^{n+1}}{\Delta t}=\delta \sum_{n}\int_{n\Delta t}^{(n+1)\Delta t}\int_{\Gamma}\frac{{\tilde\vartheta}^{n+1}-\tau_{\Delta t}\tilde\vartheta^{n+1}+\tau_{\Delta t}\tilde\vartheta^{n+1}-\theta^{n+1}}{\Delta t}\\
&=\delta \int_0^t\int_{\Gamma}\frac{\tilde\vartheta-\tau_{\Delta t}\tilde\vartheta}{\Delta t}+\delta \int_0^t\int_{\Gamma}\frac{\tau_{\Delta t}\tilde\vartheta-\theta}{\Delta t},
\end{align*}
and 
\begin{align*}
\delta \int_0^t\int_{\Gamma}\frac{\tau_{\Delta t}\tilde\vartheta-\theta}{\Delta t}\leq \frac{\delta }{4\Delta t}\int_0^t\|\tau_{\Delta t}\tilde{\vartheta}-\theta\|^2_{L^2(\Gamma)}+\delta C(\Gamma),
\end{align*}
where $C(\Gamma)>0$ is a constant depending on $\Gamma$.

Furthermore, the straightforward calculations give that 
  \begin{align*}
\delta\int_0^t\int_{\Gamma}\frac{\tilde{\vartheta}-\tau_{\Delta t}\tilde{\vartheta}}{\Delta t}&=-\delta\int_0^{\Delta t}\int_{\Gamma}\frac{\theta_{0,\delta}}{\Delta t}+\delta \int^t_{t-\Delta t}\int_{\Gamma}\frac{\tilde{\vartheta}}{\Delta t}\\
&\leq \delta \|\theta_{0,\delta}\|^2_{L^2(\Gamma)}+\frac{\delta}{4\Delta t}\int_{t-\Delta t}^t\|\tilde{\vartheta}\|^2_{L^2(\Gamma)}+\frac{\delta}{\Delta t}\int_{t-\Delta t}^t\int_{\Gamma}1\\
&\leq \delta \|\theta_{0,\delta}\|^2_{L^2(\Gamma)}+\frac{\delta}{4\Delta t}\int_{t-\Delta t}^t\|{\vartheta}\|^2_{L^2(\Gamma)}+\frac{\delta}{4\Delta t}\int_{t-\Delta t}^t\|\theta-\tilde{\vartheta}\|^2_{L^2(\Gamma)}+
\delta C(\Gamma)\\
&\leq \delta \|\theta_{0,\delta}\|^2_{L^2(\Gamma)}+\frac{\delta}{4}\|{\vartheta}\|^2_{L^2(\Gamma)}+\frac{\delta}{4\Delta t}\int_{t-\Delta t}^t\|\theta-\tilde{\vartheta}\|^2_{L^2(\Gamma)}+
\delta C(\Gamma). 
\end{align*}

Based on the preceding uniform bounds, we can recursively deduce the solution over the entire time interval $[0,T]$. This solution depends on parameters such as $\Delta t, \omega, \zeta, \lambda, \xi, \delta$, although we simplify by not explicitly enumerating them.

From our analysis, we establish that the approximate solution $(\varrho, b, \bu, \vartheta, w, \theta)$ satisfies the energy inequality $\eqref{Delta-t-total-energy}$ on the time interval $[0,T]$. Furthermore, we need to formulate weak formulations for the continuity equation, magnetic equation, coupled momentum equation, and coupled entropy equation. Specifically, these are:
\begin{itemize}
\item The continuity equation 
\begin{align}\label{weak-continuity-app}
\int_{0}^{t}\int_{\mathsf{B}}\varrho(\phi_t+\bu\cdot\nabla\phi)=\int_{\mathsf{B}}\varrho(t,\cdot)\phi(t,\cdot)-\int_{\mathsf{B}}\varrho_{0,\delta}\phi(0,\cdot), 
\end{align}
holds for $t\in[0,T]$ and all $\phi\in C^\infty_c ([0,t]\times\mathsf{B})$.

\item The non-resistive magnetic equation 
\begin{align}\label{weak-magnetic-induction-app}
\int_{0}^{t}\int_{\mathsf{B}}b(\phi_t+\bu\cdot\nabla\phi)=\int_{\mathsf{B}}b(t,\cdot)\phi(t,\cdot)-\int_{\mathsf{B}}b_{0,\delta}\phi(t,\cdot), 
  \end{align}
holds for $t\in[0,T]$ and all $\phi\in C^\infty_c ([0,t]\times\mathsf{B})$. 

  \item The coupled  momentum equation
    \begin{align}\label{sub-fluid-momentum-equ-app}
&\int_{0}^{t}\int_{\mathsf{B}}\varrho\bu\cdot\bmphi_t+ \int_{0}^{t}\int_{\mathsf{B}}\left(\varrho\bu\otimes\bu+p_w^{\lambda}(\varrho,\vartheta)\mathbb{I}_2+\frac{1}{2}b^2\mathbb{I}_2+\delta (\varrho+b)^\beta\mathbb{I}_2-\bbS_w^{\omega}(\vartheta, \nabla\bu)\right):\nabla\bmphi\notag \\
    &-\delta \int_{0}^{t}\int_{\Gamma}\frac{(\bfv-\tau_{\Delta t}\bfv)\cdot\bfn}{\Delta t} \psi+(1-\delta)\int_0^t\int_{\Gamma}w_t\psi_t-\int_0^t\int_{\Gamma}\Delta w\Delta \psi\notag\\
    &-\int_0^t\int_{\Gamma}\alpha_1\nabla w_t\cdot\nabla\psi+\int_0^t\int_{\Gamma}(\alpha_2\nabla w_t\cdot\nabla\psi_t+\nabla\theta\cdot\nabla\psi)\\
    &=-\int_{\mathsf{B}}(\varrho\bu)_{0,\delta}\cdot\bmphi(0,\cdot)-(1-\delta)\int_{\Gamma}v_0\psi(0,\cdot)-\int_{\Gamma}\nabla v_0\cdot\nabla\psi(0,\cdot),\notag 
    \end{align}
holds for $t\in[0, T]$ and all $\bmphi \in C^\infty_c([0, t)\times\mathsf{B})$ and $\psi\in C^\infty_c([0, t)\times\Gamma)$ such that $\bmphi|_{\Gamma_{w}(t)}=\psi \bfn$ on $\Gamma_T$, where 
  \begin{align*}
&\bfv:=\bu|_{\Gamma_{w}}, \quad p_w^{\lambda}(\varrho, \vartheta):=p_M(\varrho,\vartheta)+\frac{a^\lambda_{w}}{3}\vartheta^4, \\
&\bbS_w^{\omega}(\vartheta,\nabla\bu)=\mu_{w}^{\omega}(\vartheta)\left(\nabla\bu+\nabla^\top \bu-\rmdiv\bu\mathbb{I}_2\right)+\eta_{w}^{\omega}(\vartheta)\rmdiv\bu\mathbb{I}_2.
\end{align*}

\item The coupled  entropy equation 
  \begin{align}\label{sub-entropy-equ-app}
    &\int_{0}^{t}\int_{\mathsf{B}}\left(\varrho s_w^{\lambda}\vphi_t + \varrho s_w^{\lambda} \bu\cdot\nabla\vphi -\frac{\kappa_w^{\zeta}(\vartheta)\nabla\vartheta}{\vartheta}\cdot\nabla\vphi\right)+\langle \mathcal{D}_{w}^{\omega,\zeta};\vphi\rangle_{[\mathcal{M};C]([0, t]\times\overline{\mathsf{B}})} \notag\\
    &+\xi \int_{0}^{ t}\int_{\mathsf{B}}\vartheta^4\vphi-\delta \int_{0}^{ t}\int_{\Gamma}\frac{\tilde{\vartheta}-\tau_{\Delta t}\tilde\vartheta}{\Delta t}\tilde \vphi+(1-\delta)\int_0^t\int_{\Gamma}\theta\tilde\vphi_t-\int_0^t\int_{\Gamma}\nabla\theta\cdot\nabla\tilde \vphi\\
    &+\int_0^t\int_{\Gamma}\nabla w\cdot\nabla \tilde\vphi_t=\int_{\mathsf{B}} \varrho_{0,\delta} s^{\lambda}_w(\varrho_{0,\delta}, \vartheta_{0,\delta})\vphi(0,\cdot)+(1-\delta)\int_{\Gamma}\theta_0\tilde\vphi(0,\cdot)+\int_{\Gamma}\nabla w_0\cdot\nabla\tilde\vphi(0,\cdot),\notag
\end{align}
holds for all $t\in [0,T]$ and $\vphi \in C^\infty_c([0, t)\times\mathsf{B})$ and $\tilde\vphi\in C^\infty_c([0, t)\times \Gamma)$ such that $\vphi|_{\Gamma_{w}(t)}=\tilde\vphi$ on $\Gamma_T$, $\vartheta|_{\Gamma_{w}(t)}=\tilde \vartheta$, and 
\begin{equation*}
\mathcal{D}_{w}^{\omega, \zeta}\geq \frac{1}{\vartheta}\left(\bbS_w^{\omega}(\vartheta,\nabla\bu):\nabla\bu+\frac{\kappa_w^{\zeta}(\vartheta)|\nabla\vartheta|^2}{\vartheta}\right).
\end{equation*}
Note that the approximate entropy is given by 
\begin{align*}
s_w^{\lambda}:=s_w^{\lambda}(\varrho,\vartheta)=s_M(\varrho, \vartheta)+\frac{4a^\lambda_{w}}{3}\frac{\vartheta^3}{\varrho}.
\end{align*}
\end{itemize}

Now, we are in position to state the following lemma.
\begin{lemma}
For any $t\in (0,T]$, there is a positive constant $C$ independent of $\Delta t, \omega, \zeta, \lambda, \xi, \delta$, such that the uniform bounds for approximate solutions $(\rho, b, \bu, \vartheta, w, \theta)$ hold as follows: 
\begin{align}\label{Delta-t-uni-bound}
&\frac{\delta}{\Delta t} \int_0^t(\|\tilde{\vartheta}-\theta\|^2_{L^2(\Gamma)}+\|\bfv-w_t\bfn\|^2_{L^2(\Gamma)})\leq  C,\notag\\
&\esssup_{0\leq t\leq T}(\|w_t\|^2_{L^2(\Gamma)}(t)+\|\Delta  w\|^2_{L^2(\Gamma)}(t)+\alpha_2\|\nabla w_t\|^2_{L^2(\Gamma)}(t)+\|\theta\|^2_{L^2(\Gamma)}(t))\leq C,\notag\\
&\int_0^t(\alpha_1\|\nabla w_t\|^2_{L^2(\Gamma)}+\|\nabla\theta\|^2_{L^2(\Gamma)})\leq C,\notag\\
&{\esssup_{0\leq t\leq T}\int_{\mathsf{B}}\bigg(\varrho|\bu|^2+b^2+\delta(\varrho+b)^\beta\bigg)(t)\leq C,}\\
&\esssup_{0\leq t\leq T}\int_{\mathsf{B}} (a^\lambda_w\vartheta^4+{\varrho^{\gamma}}+\varrho \vartheta)(t)\leq C\esssup_{0\leq t\leq T}\int_{\mathsf{B}} \varrho e_w^{\lambda}(\varrho,\vartheta)(t)\leq C,\notag\\
&\int_0^t\int_{\mathsf{B}}\left(\xi\vartheta^5+h_w^{\zeta}(|\nabla\log\vartheta|^2+|\nabla\vartheta^{\frac{3}{2}}|^2)\right)\leq C,\notag\\
&  \int_0^t\int_{\mathsf{B}}\frac{1}{\vartheta}\left(\bbS_{w}^{\omega}(\vartheta, \nabla\bu):\nabla\bu+\frac{\kappa_w^{\zeta}(\vartheta)|\nabla\vartheta|^2}{\vartheta}\right)\leq C.\notag
\end{align}
\end{lemma}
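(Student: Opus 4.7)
The plan is to read every bound in the lemma directly off the two energy inequalities that have already been derived: the summed fluid energy inequality (obtained from \eqref{sub-fluid-energy-equ} by telescoping over the time subintervals) and the summed total energy inequality \eqref{Delta-t-total-energy} that additionally incorporates the Helmholtz function. The key preliminary step is to verify that the right-hand sides of both inequalities are bounded independently of $\Delta t, \omega, \zeta, \lambda, \xi, \delta$. For the structural part this follows immediately from the assumptions $v_0 \in L^2(\Gamma)$, $\sqrt{\alpha_2}v_0 \in H^1(\Gamma)$, $w_0 \in H^2(\Gamma)$, $\theta_0 \in L^2(\Gamma)$ in \eqref{ini-data-1}. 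For the fluid part, the approximation properties \eqref{ini-app}--\eqref{ene-entropy-app} together with the fact that $\mathcal{H}_{\lambda}(\varrho_{0,\delta},\vartheta_{0,\delta})$ remains controlled in $L^1(\mathsf{B})$ give the needed uniform bound, while the linear term $\tfrac{\partial \mathcal{H}_\lambda(\bar\varrho,1)}{\partial\varrho}(\varrho_{0,\delta} - \bar\varrho)$ is integrable since $\varrho_{0,\delta} \to \varrho_0$ in $L^\gamma(\mathsf{B})$.

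With the right-hand sides under control, each bound in \eqref{Delta-t-uni-bound} corresponds to a distinct non-negative term on the left-hand side. The penalization term $\tfrac{\delta}{2\Delta t}\int_0^t \|\bfv - w_t \bfn\|^2_{L^2(\Gamma)}$ yields the first bound; the structural kinetic, bending, inertial and thermal energies give the $L^\infty_t$ bounds on $w_t, \Delta w, \sqrt{\alpha_2}\nabla w_t, \theta$; and the viscoelastic and thermal dissipation give the bounds on $\sqrt{\alpha_1}\nabla w_t$ and $\nabla \theta$ in $L^2_t L^2_y$. For the fluid estimates, I would use the summed version of \eqref{sub-fluid-energy-equ} directly: it controls $\varrho|\bu|^2$, $b^2$, $\delta(\varrho+b)^\beta$ and $\varrho e^\lambda_w(\varrho,\vartheta)$ in $L^\infty_t L^1$. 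Since $\varrho e^\lambda_w = \tfrac{\varrho^\gamma}{\gamma-1} + \varrho\vartheta + a^\lambda_w \vartheta^4$ decomposes as a sum of non-negative terms, the bound on $\varrho^\gamma, \varrho\vartheta$ and $a^\lambda_w\vartheta^4$ follows termwise. The artificial $\xi\int \vartheta^5$ estimate appears explicitly on the left of both inequalities.

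Finally, the two entropy-type dissipation bounds are read off \eqref{Delta-t-total-energy}. The last line of \eqref{Delta-t-uni-bound} is simply the positive term $\int_0^t\!\int_{\mathsf{B}} \tfrac{1}{\vartheta}(\bbS_w^\omega:\nabla\bu + \tfrac{\kappa_w^\zeta|\nabla\vartheta|^2}{\vartheta})$ appearing on the LHS of \eqref{Delta-t-total-energy}. From this, the logarithm and $\vartheta^{3/2}$ gradient bounds are obtained by exploiting the growth hypothesis \eqref{heat-coe} on $\kappa$: namely $\tfrac{\kappa(\vartheta)|\nabla\vartheta|^2}{\vartheta^2} \gtrsim \tfrac{|\nabla\vartheta|^2}{\vartheta^2} + |\nabla \vartheta^{3/2}|^2$, and multiplying by $h_w^\zeta$ preserves these bounds weighted by $h_w^\zeta$. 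The primary obstacle I anticipate is the bookkeeping necessary to confirm that the coercivity of the shifted Helmholtz potential $\mathcal{H}_\lambda(\varrho,\vartheta) - \tfrac{\partial\mathcal{H}_\lambda(\bar\varrho,1)}{\partial\varrho}(\varrho - \bar\varrho) - \mathcal{H}_\lambda(\bar\varrho,1) \geq 0$ survives the $a^\lambda_w$-approximation uniformly; this is ensured by the pointwise bound $0 < \lambda \leq a^\lambda_w \leq a$ and the fact that the essential strict convexity of $\mathcal{H}_\lambda$ in $\varrho$ is retained, so the standard Feireisl--Novotn\'y argument carries through without modification.
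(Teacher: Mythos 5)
Your proposal is correct and follows exactly the route the paper takes (indeed the paper offers no separate proof — the lemma is stated as an immediate consequence of the two summed energy inequalities you cite). You correctly read off the penalization, structural, and fluid bounds from the left-hand sides, decompose $\varrho e^\lambda_w = \tfrac{\varrho^\gamma}{\gamma-1} + \varrho\vartheta + a^\lambda_w\vartheta^4$ termwise, use $\kappa(\vartheta) \gtrsim 1+\vartheta^3$ to extract $h^\zeta_w(|\nabla\log\vartheta|^2 + |\nabla\vartheta^{3/2}|^2)$ from the entropy dissipation, and note that the shifted Helmholtz coercivity is preserved since $0<\lambda a\leq a^\lambda_w\leq a$; the only additional care required (which your remark on the right-hand side already points at) is that $\int_{\sfB}\varrho_{0,\delta}\,e^\lambda_w(\varrho_{0,\delta},\vartheta_{0,\delta})$ reduces on $\Omega_w(0)$ to the $a$-quantity controlled by \eqref{ene-entropy-app}, while the contribution on $\sfB\setminus\Omega_w(0)$ is just $a^\lambda_w\vartheta_{0,\delta}^4\leq a\,\|\vartheta_{0,\delta}\|^4_{L^\infty}$, bounded by the $L^\infty$ assumption on $\vartheta_{0,\delta}$.
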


We derive from $\eqref{Delta-t-uni-bound}_7$, $\eqref{vis-coe}$, $\eqref{vis-app}$ and $\eqref{g-app}$ that (and by using  Korn-Poincar\'{e} inequality in the fixed domain)
\begin{align}\label{Delta-t-bu}
   \|\bu\|^2_{L^2(0,T;W^{1,2}(\mathsf{B}))}\leq \frac{C}{\omega},
\end{align}
which further gives (since we are in dimension $2$)
\begin{align}\label{Delta-t-bu-Lp}
\|\bu\|_{L^2(0,T; L^p(\mathsf{B}))} \leq \frac{C}{\sqrt{\omega}}, \quad \text{for any } p \in [2, \infty) .
\end{align}

Next, we compute 
\begin{align*}
\left[\int_{\sfB} |\varrho \bu|^{\frac{2\gamma}{\gamma+1}}\right]^{\frac{\gamma+1}{2\gamma}} \leq 
\left[\left( \int_{\sfB}\left|\sqrt{\varrho} \bu \right|^2  \right)^{\frac{\gamma}{\gamma+1}} \left(\int_{\sfB} \varrho^\gamma \right)^{\frac{1}{\gamma+1}} \right]^{\frac{\gamma+1}{2\gamma}} ,
\end{align*} 
and 
thus using $\eqref{Delta-t-uni-bound}_4$ and $\eqref{Delta-t-uni-bound}_5$, it holds that 
  \begin{align}\label{Delta-t-varrho-bu}\|\varrho\bu\|_{L^\infty(0,T;L^{\frac{2\gamma}{\gamma+1}}(\mathsf{B}))}\leq C.
\end{align}
We now observe that 
\begin{align*}
\bigg| \int_{\sfB} \varrho \bu \otimes \bu \cdot  \bmphi \bigg| \leq \|\varrho \bu\|_{L^{\frac{2\gamma}{\gamma+1}}(\sfB) } \|\bu\|_{L^p(\sfB)} \|\bmphi\|_{L^{\frac{2\gamma p}{p(\gamma-1)-2\gamma}}(\sfB)},
\end{align*} 
for any $\bmphi \in L^{\frac{2\gamma p}{p(\gamma-1)-2\gamma}}(\sfB)$ and any $p > \frac{2\gamma}{\gamma-1}$ (since the dimension is $2$). This implies (using \eqref{Delta-t-bu} and \eqref{Delta-t-varrho-bu}) 
 \begin{align}\label{Delta-t-varrho-bu-2}
\|\varrho\bu\otimes \bu\|_{L^2(0,T;L^{\frac{2p\gamma}{(2+p)\gamma+p}}(\mathsf{B}))} \leq \frac{C}{\sqrt{\omega}},
\end{align}
for any $p> \frac{2\gamma}{\gamma-1}$.

Let us observe that 
\begin{align*}
\| b \bu \|_{L^{\frac{2p}{p+2}}(\sfB)}\leq \|b\|_{L^2(\sfB)} \|\bu\|_{L^p(\sfB)} , \ \ \text{for any } p \geq 2,
\end{align*}
and therefore, by means of $\eqref{Delta-t-uni-bound}_4$ and \eqref{Delta-t-bu-Lp}, we have 
\begin{align}\label{Delta-t-b-bu-bound} 
\|b \bu \|_{L^2(0,T;L^{\frac{2p}{p+2}}(\sfB))} \leq \frac{C}{\sqrt{\omega}}.
\end{align}

We also infer from $\eqref{Delta-t-uni-bound}_5$ and $\eqref{Delta-t-uni-bound}_7$ that 
  \begin{align}\label{Delta-t-entropy}
\|\varrho s_{w}^{\lambda}(\varrho,\vartheta)\|_{L^q((0,T)\times\mathsf{B})}+\|\varrho s_{w}^{\lambda}(\varrho,\vartheta)\bu\|_{L^s((0,T)\times\mathsf{B})}+\left\|\frac{\kappa_{w}^{\zeta}{(\vartheta)}\nabla\vartheta}{\vartheta}\right\|_{L^r((0,T)\times\mathsf{B})}\leq C, 
\end{align}
for some $q, s, r>1$, where $C$ is dependent on the parameters $w, \lambda,\zeta$.


By employing the concept based on ``Bogovski\u{\i} operator'' (see \cite{Bo-80, FP-00} for details), we can achieve the improved integrability as follows:
\begin{align}\label{pressure-int}
{\iint_{\mathcal O}\bigg(\varrho^{\gamma+1}+b^{2+1}+\delta(\varrho^{\beta+1}+b^{\beta+1})\bigg)\leq C},
\end{align}
for any compact subset $\mathcal O \Subset \overline{(0,T) \times \mathsf{B}}\setminus \Gamma^w_T$,  where $C$ is independent of $\Delta t$.

\vspace{.15cm}

As for the estimate of displacement $w(t,y)$, for any $\frac{3}{4}<\theta<1$, it holds that 
\begin{align}\label{embedding-Delta-t}
w& \in  W^{1,\infty}(0,T;L^2(\Gamma))\cap L^\infty (0,T;H^2(\Gamma))\notag \\
&\hookrightarrow C^{0, 1-\theta}(0,T;H^{2\theta}(\Gamma))\\
&\hookrightarrow C^{0, 1-\theta}(0,T; C^{1, 2\theta-\frac{3}{2}}(\Gamma))\notag ,
\end{align}

where we have used the following elementary estimates 
  \begin{align*}
\|w(t)-w(s)\|_{(L^2(\Gamma), H^2(\Gamma))_{\theta,2}}&\leq \|w(t)-w(s)\|_{H^2(\Gamma)}^\theta\|w(t)-w(s)\|_{L^2(\Gamma)}^{1-\theta}\\
&\leq C\|w\|_{L^\infty(0,T;H^2(\Gamma))}^\theta\|w\|_{W^{1,\infty}(0,T;L^2(\Gamma))}^{1-\theta}|t-s|^{1-\theta}, 
\end{align*}
and 
  \begin{align*}
H^{2\theta}(\Gamma) \hookrightarrow C^{1, 2\theta-\frac{3}{2}}(\Gamma);
\end{align*}
see for instance \cite[Theorem 1.50]{Danchin-chemin-hajer}. 

Finally, the upper bound of the time interval $T$ is determined by 
\begin{equation} 
\begin{aligned}\label{T-max}
&\alpha_{\partial\Omega}< \min_{y\in\Gamma}w_0(y)-CT^{\frac{1}{5}}\leq |w_0(y)|-|w(t,y)-w_0(y)| \leq |w(t,y)| ,  \\
& |w(t,y)| \leq  |w_0(y)|+|w(t,y)-w_0(y)|\leq \max_{y\in\Gamma}w_0(y)+CT^{\frac{1}{5}}< \beta_{\partial\Omega}; 
\end{aligned}
\end{equation}
such calculations reveal that the displacement of the shell is bounded from the initial setup.

\bigskip 

In the subsequent sections of this paper, we will proceed in the following order to derive the final limiting system. Our approach involves firstly considering $\Delta t\rightarrow 0$, followed by $\omega, \zeta, \lambda, \xi \rightarrow 0 $, and finally $\delta\rightarrow 0$.

\section{Passing to the limit as $\Delta t\rightarrow 0$}\label{Sec-limit-Delta-t}  
In this section, our goal is to derive the limiting system when   $\Delta t\rightarrow 0$.

\subsection{A first set of limits}\label{limit-Delta-t}
From $\eqref{Delta-t-uni-bound}_1$ and $\eqref{Delta-t-uni-bound}_2$, we infer  that 
  \begin{align}\label{Delta-w-theta}
&\tilde\vartheta_{\Delta t}\rightharpoonup \theta,\hspace{0.5cm} \mathrm{weakly} \hspace{0.1cm}\mathrm{in} \hspace{0.1cm} L^2(0,T; L^2(\Gamma))\notag,\\
&\bfv_{\Delta t}\rightharpoonup w_t\bfn,\hspace{0.5cm} \mathrm{weakly} \hspace{0.1cm} \mathrm{in} \hspace{0.1cm} L^2(0,T; L^2(\Gamma),\\
&(w_{\Delta t})_t \rightharpoonup w_t,\hspace{0.5cm} \mathrm{weakly *} \hspace{0.1cm} \mathrm{in} \hspace{0.1cm} L^\infty(0,T;L^2({\Gamma})) \notag,\\
&\theta_{\Delta t}\rightharpoonup \theta,\hspace{0.5cm} \mathrm{weakly*} \hspace{0.1cm} \mathrm{in} \hspace{0.1cm} L^\infty(0,T;L^2({\Gamma})), \hspace{0.1cm} \mathrm{as}\hspace{0.1cm}  \Delta t\to  0.\notag
\end{align}

Moreover, for the penalization terms in \eqref{sub-fluid-momentum-equ-app} and $\eqref{sub-entropy-equ-app}$, we use the definition of time-shift in $\eqref{time-shift}$ to derive that 
\begin{align}\label{Delta-t-v}
&\int_0^T\int_{\Gamma}\frac{\bfv_{\Delta t}\cdot \bfn -\tau_{\Delta t}\bfv_{\Delta t}\cdot \bfn}{\Delta t}\psi\notag\\
&=-\int_{\Delta t}^{T-\Delta t}\int_{\Gamma}\bfv_{\Delta t}\cdot \bfn \frac{\tau_{-\Delta t}\psi-\psi}{\Delta t}+\frac{1}{\Delta t}\int_{T-\Delta t}^T\int_{\Gamma}\bfv_{\Delta t}\cdot\bfn \psi-\frac{1}{\Delta t}\int_{0}^{\Delta t}\int_{\Gamma}v_{0}\psi\\
&\to -\int_0^T\int_{\Gamma}w_t\psi_t-\int_{\Gamma}v_{0}\psi(0,\cdot) , \hspace{0.5cm}\mathrm{as} \hspace{0.2cm} \Delta t \to 0, \ \forall \psi\in C_c^\infty([0,T)\times\Gamma), \notag
\end{align}
and 
\begin{align}\label{Delta-w-theta}
&-\int_0^T\int_{\Gamma}\frac{\tilde{\vartheta}_{\Delta t}-\tau_{\Delta t}\tilde{\vartheta}_{\Delta t}}{\Delta t}\tilde\vphi\notag\\
&=-\int_{\Delta t}^{T-\Delta t}\int_{\Gamma}\tilde{\vartheta}_{\Delta t}\frac{\tau_{-\Delta t}\tilde\vphi-\tilde\vphi}{\Delta t}+\frac{1}{\Delta t}\int_{T-\Delta t}^{T}\tilde{\vartheta}_{\Delta t}\tilde\vphi-\frac{1}{\Delta t}\int_0^{\Delta t}\int_{\Gamma}\theta_{0}\tilde\vphi \\
&\to -\int_0^T\int_{\Gamma}\theta\tilde\vphi_t-\int_{\Gamma}\theta_{0}\tilde\vphi(0,\cdot),\hspace{0.5cm}\mathrm{as} \hspace{0.2cm} \Delta t \to 0, \  \forall \tilde\vphi\in C_c^\infty([0,T)\times\Gamma). \notag
\end{align}

Thus, in the weak formulations  \eqref{sub-fluid-momentum-equ-app} and $\eqref{sub-entropy-equ-app}$, we finally have 
\begin{align*}
&-\delta\int_0^T\int_{\Gamma}\frac{\bfv_{\Delta t}\cdot\bfn -\tau_{\Delta}\bfv_{\Delta t}\cdot\bfn}{\Delta t}\psi+(1-\delta)\int_0^T\int_{\Gamma}(w_{\Delta t})_t\psi_t-(1-\delta)\int_0^T\frac{\mathrm{d}}{\mathrm{d}t}\int_{\Gamma}(w_{\Delta t})_t\psi\notag\\
&\to \int_0^T\int_{\Gamma}w_t\psi_t+\int_{\Gamma}w_0\psi(0,\cdot), \hspace{0.5cm}\mathrm{as} \hspace{0.2cm} \Delta t \to 0, \ \forall \psi\in C_c^\infty([0,T)\times\Gamma)
\end{align*}
and 
  \begin{align*}
&-\delta\int_0^T\int_{\Gamma}\frac{\tilde{\vartheta}_{\Delta t}-\tau_{\Delta t}\tilde{\vartheta}_{\Delta t}}{\Delta t}\tilde\vphi-(1-\delta)\int_0^T\int_{\Gamma}\theta_{\Delta t} \tilde\vphi_t+(1-\delta)\int_0^T\frac{\mathrm{d}}{\mathrm{d} t}\int_{\Gamma}\theta_{\Delta t}\tilde\vphi\notag\\
&\to \int_0^T\int_{\Gamma}\theta\tilde\vphi_t+\int_{\Gamma}\theta_0\tilde\vphi(0,\cdot), \hspace{0.5cm}\mathrm{as} \hspace{0.2cm} \Delta t \to 0, \ \forall \tilde\vphi\in C_c^\infty([0,T)\times\Gamma).
\end{align*}

According to $\eqref{embedding-Delta-t}$, for $\frac{3}{4}<\theta<1$, it holds that 
\begin{align*}
w_{\Delta t}\in W^{1,\infty}(0,T;L^2(\Gamma))\cap L^\infty (0,T;H^2(\Gamma))\hookrightarrow C^{0, 1-\theta}(0,T;C^{1,2\theta-\frac{3}{2}}(\Gamma)), 
\end{align*}
thus 
\begin{align*}
w_{\Delta t}\rightarrow w, \hspace{0.5cm}\mathrm{in}\hspace{0.2cm} C^{\frac{1}{5}}([0,T]\times\Gamma). 
\end{align*}
Moreover, it has 
\begin{align}\label{Omega}
|(\Omega_{w_{\Delta t}}\setminus \Omega_w)\cup ( \Omega_w\setminus\Omega_{w_{\Delta t}} )|\to 0, \hspace{0.5cm}\mathrm{in}\hspace{0.2cm} C^{\frac{1}{5}}([0,T]),
\end{align}
and
\begin{align}
&g^\omega_{w_{\Delta t}}\to g^\omega_{w}, \hspace{0.5cm}\mathrm{in} \hspace{0.5cm} C^{\frac{1}{5}}([0,T]\times\mathsf{B}), \hspace{0.5cm}\mathrm{as} \hspace{0.5cm}\Delta t\to 0, \notag \\
&h^{\zeta}_{w_{\Delta t}}\to h^{\zeta}_{w}, \hspace{0.5cm} f^{\lambda}_{w_{\Delta t}}\to f^{\lambda}_{w}, \hspace{0.5cm}\mathrm{in} \hspace{0.5cm} L^\infty([0,T]\times\mathsf{B}), \hspace{0.5cm}\mathrm{as} \hspace{0.5cm}\Delta t\to 0.\label{parameter-Delta-t}
\end{align}

Using $\eqref{Delta-t-uni-bound}_4$, $\eqref{Delta-t-uni-bound}_5$ and $\eqref{Delta-t-uni-bound}_6$, we have 
\begin{align}\label{varrho-vartheta-Delta-t}
&{\varrho_{\Delta t} \rightharpoonup  \varrho, \hspace{0.5cm} \mathrm{weakly *} \hspace{0.2cm}\mathrm{in} \hspace{0.2cm} L^\infty(0,T;L^{\gamma}(\mathsf{B}))},\notag\\
&{b_{\Delta t} \rightharpoonup  b, \hspace{0.5cm} \mathrm{weakly *} \hspace{0.2cm}\mathrm{in} \hspace{0.2cm} L^\infty(0,T;L^2(\mathsf{B}))},\\
&\vartheta_{\Delta t} \rightharpoonup  \vartheta, \hspace{0.5cm} \mathrm{weakly *} \hspace{0.2cm}\mathrm{in} \hspace{0.2cm} L^\infty(0,T;L^{4}(\mathsf{B})),\notag\\
&\vartheta_{\Delta t} \rightharpoonup  \vartheta, \hspace{0.5cm} \mathrm{weakly} \hspace{0.2cm}\mathrm{in} \hspace{0.2cm} L^2(0,T;H^1(\mathsf{B})). \notag
\end{align}

With the aid of the continuity equation $\eqref{weak-continuity}$, $\eqref{weak-magnetic-induction}$ and $\eqref{Delta-t-uni-bound}_4$, and by applying Arzel\`{a}-Ascoli theorem, we can then strengthen $\eqref{varrho-vartheta-Delta-t}_1$ and $\eqref{varrho-vartheta-Delta-t}_2$ to 
  \begin{align*}
    {\varrho_{\Delta t} \rightharpoonup \varrho \hspace{0.5cm}\mathrm{in}\hspace{0.2cm} C_{\mathrm{weak}}([0,T];L^{\gamma}(\mathsf{B}))},
\end{align*}
and 
\begin{align*}
       { b_{\Delta t}\rightharpoonup b \hspace{0.5cm}\mathrm{in}\hspace{0.2cm} C_{\mathrm{weak}}([0,T];L^{2}(\mathsf{B}))}.
\end{align*}
It follows from $\eqref{Delta-t-bu}$ that 
\begin{align*}
    \bu_{\Delta t} \rightharpoonup \bu, \hspace{0.5cm} \mathrm{weakly} \hspace{0.1cm}\mathrm{in} \hspace{0.1cm}L^2(0,T;H_0^1(\mathsf{B})).
\end{align*}

Now, by from  the bound \eqref{Delta-t-b-bu-bound}, one has  
\begin{align*}
    b_{\Delta t}\bu_{\Delta t}\rightharpoonup b\bu \hspace{0.5cm}\mathrm{weakly}\hspace{0.2cm}\mathrm{in}\hspace{0.2cm} L^2(0,T;L^{\frac{2p}{p+2}}(\mathsf{B})), 
\end{align*}
for any $p\geq 2$.

On the other hand, based on the bound \eqref{Delta-t-varrho-bu}, we have  
\begin{align*}
    \varrho_{\Delta t}\bu_{\Delta t}\rightharpoonup \varrho\bu \hspace{0.5cm}\mathrm{weakly *}\hspace{0.2cm}\mathrm{in}\hspace{0.2cm} L^\infty(0,T;L^{\frac{2\gamma}{\gamma+1}}(\mathsf{B})), 
\end{align*}
together with the momentum equation,  
\begin{align*}
    \varrho_{\Delta t}\bu_{\Delta t}\rightharpoonup \varrho\bu \hspace{0.5cm}\mathrm{in}\hspace{0.1cm} C_{\mathrm{weak}}(0,T;L^{\frac{2\gamma}{\gamma+1}}(\mathsf{B})) .  
\end{align*}
Using the compact embedding $L^{\frac{2\gamma}{\gamma+1}}(\mathsf{B})$ into $H^{-1}(\mathsf{B})$, we further derive that 
\begin{align*}
    \varrho_{\Delta t}\bu_{\Delta t}\rightarrow \varrho\bu \hspace{0.5cm}\mathrm{in}\hspace{0.1cm} C_{\mathrm{weak}}(0,T; H^{-1}(\mathsf{B})), 
\end{align*}

 We further  have (by means of the bound \eqref{Delta-t-varrho-bu-2}) 
\begin{align*}
    \varrho_{\Delta t}\bu_{\Delta t}\otimes\bu_{\Delta t}\rightharpoonup \varrho\bu\otimes \bu \hspace{0.5cm}\mathrm{weakly}\hspace{0.1cm}\mathrm{in}\hspace{0.1cm} L^2(0,T;L^{\frac{2p\gamma}{(2+p)\gamma+p}}(\mathsf{B})),
\end{align*}
for all $p>\frac{2\gamma}{\gamma-1}$.

\subsection{Weak convergence of the pressure}
Based on the estimates $\eqref{pressure-int}$, $\eqref{Omega}$, $\eqref{parameter-Delta-t}$, and the definition
  \begin{align*}
   \widetilde p_{M}(\varrho,\vartheta,b):=&p_M(\varrho,\vartheta)+\frac{1}{2}b^2=\varrho^\gamma+\varrho\vartheta+\frac{1}{2}b^2,
\end{align*}
we obtain  
  \begin{align*}
&p^{\lambda,\delta}_{w_{\Delta t}}(\varrho_{\Delta t},\vartheta_{\Delta t}, b_{\Delta t}):=\widetilde p_M(\varrho_{\Delta t},\vartheta_{\Delta t}, b_{\Delta t})+\frac{a^\lambda_{w_{\Delta t}}}{3}\vartheta_{\Delta t}^4+\delta(\varrho_{\Delta t}+b_{\Delta t})^\beta \notag\\
&\hspace{3.5cm}\rightharpoonup \overline{\widetilde p_{M}(\varrho,\vartheta, b)}+\frac{a_{w}^{\lambda}}{3}\overline{\vartheta^4}+\delta\overline{(\varrho+b)^\beta} \hspace{0.5cm}\mathrm{weakly}\hspace{0.1cm}\mathrm{in}\hspace{0.1cm} L^1(\mathcal{O}) , 
\end{align*} 
for any subset $\mathcal{O}\Subset [0,T]\times\mathsf{B}$ such that $\mathcal{O} \cap  ((0,T) \times \Gamma_{w_{\Delta t}}(t)) = \emptyset$. 
\emph{Here and in the sequel, we use ``bar''  notation to denote a weak limit of a composed or nonlinear function}. The method we adopt here is same as the result in \cite[Section 3.8]{MMNRT-22}. More accurately, choosing a sequence of compact sets $\{\mathcal{O}_i\}_{i\in \mathbb{N}}$ such that 
\begin{align*}
    \mathcal{O}_i\cap   ( (0,T) \times  \Gamma_{w_{\Delta t}}(t) )   = \emptyset , \hspace{0.3cm}\mathcal{O}_i\subset\mathcal{O}_{i+1},\hspace{0.3cm}  \mathcal{O}_i\to[0,T]\times\mathsf{B} \hspace{0.5cm}\mathrm{as} \hspace{0.1cm} i\rightarrow \infty, 
\end{align*}
then we try to prove a weak limit of $p^{\lambda,\delta}_{w_{\Delta t}}(\varrho_{\Delta t},\vartheta_{\Delta t}, b_{\Delta t})$ on the whole domain $[0,T]\times\mathsf{B}$. The only difficulty for us is to preclude the concentration at the moving boundary $\Gamma_{w}(t)$, and this could be done through using  \cite[Lemma 6.4]{Breit-Sebastian-ARMA}.
\begin{lemma}\label{exc-pre-1} 
    For any $\epsilon>0$, there exists $\widetilde{\Delta t}>0$ and $\mathcal{A}_{\epsilon}\Subset [0,T]\times\mathsf{B}$ such that for all $\Delta t <\widetilde{\Delta t}$, it holds that 
    \begin{align*}
    \mathcal{A}_{\epsilon}\cap ([0,T]\times \Gamma_{w_{\Delta t}}(t)) = \emptyset, \hspace{1cm}\int_{((0,T)\times \mathsf{B})\setminus \mathcal{A}_{\epsilon}} p^{\lambda,\delta}_{w_{\Delta t}}(\varrho_{\Delta t},\vartheta_{\Delta t}, b_{\Delta t})\leq \epsilon.
    \end{align*}
\end{lemma}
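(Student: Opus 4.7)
\textbf{Plan for Lemma \ref{exc-pre-1}.} The strategy is to take $\mathcal{A}_\epsilon$ as the complement of a thin tubular neighborhood of the limit interface $\Gamma_T^w$, and then to rule out pressure concentration on that tube via a localized Bogovski\u{\i} test-function argument, following \cite[Lemma 6.4]{Breit-Sebastian-ARMA}.

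\emph{Geometric set-up.} For $\eta>0$ I introduce
\begin{equation*}
U_\eta := \big\{(t,x)\in[0,T]\times\mathsf{B} \,:\, \operatorname{dist}(x,\Gamma_w(t))<\eta\big\}, \qquad \mathcal{A}_\eta := \overline{\big([0,T]\times\mathsf{B}\big)\setminus U_\eta}.
\end{equation*}
Then $\mathcal{A}_\eta$ is compact and disjoint from $\Gamma_T^w$. The $C^{0,1/5}$--convergence $w_{\Delta t}\to w$ arising from \eqref{embedding-Delta-t} together with Arzel\`{a}--Ascoli yields $\widetilde{\Delta t}(\eta)>0$ such that $\|w-w_{\Delta t}\|_{C^0([0,T]\times\Gamma)}<\eta/2$ whenever $\Delta t<\widetilde{\Delta t}$. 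This forces $\Gamma_{w_{\Delta t}}(t)\subset U_\eta$ for every $t$, so that
\begin{equation*}
\mathcal{A}_\eta \cap \big([0,T]\times\Gamma_{w_{\Delta t}}(t)\big)=\emptyset,
\end{equation*}
which delivers the first assertion of the lemma.

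\emph{Killing concentration at the interface.} The substance of the proof is the uniform-in-$\Delta t$ bound
\begin{equation*}
\int_{U_\eta} p^{\lambda,\delta}_{w_{\Delta t}}(\varrho_{\Delta t},\vartheta_{\Delta t},b_{\Delta t}) \longrightarrow 0 \quad\text{as } \eta\to 0,
\end{equation*}
after which the choice $\mathcal{A}_\epsilon:=\mathcal{A}_\eta$ with $\eta$ small enough closes the proof. I plan to test the momentum equation \eqref{sub-fluid-momentum-equ-app} against a Bogovski\u{\i}-type vector field
\begin{equation*}
\bmphi_\eta := \mathcal{B}\bigl(\chi_\eta\,(\varrho_{\Delta t}+b_{\Delta t}) - \langle \chi_\eta(\varrho_{\Delta t}+b_{\Delta t})\rangle\bigr),
\end{equation*}
where $\chi_\eta$ is a smooth cut-off adapted to $U_\eta$ and $\mathcal{B}$ is an inverse-divergence Bogovski\u{\i} operator on a thin tube surrounding $\Gamma_{w_{\Delta t}}(t)$. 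The resulting identity matches the leading order $\int_{U_\eta} p^{\lambda,\delta}_{w_{\Delta t}}\,(\varrho_{\Delta t}+b_{\Delta t})\,\chi_\eta$ against convective, viscous, penalization and structural contributions. Each of these is controlled via the a priori bounds \eqref{Delta-t-uni-bound}, \eqref{Delta-t-varrho-bu-2} and \eqref{Delta-t-b-bu-bound}, combined with the higher integrability \eqref{pressure-int} valid on the slightly larger compact set $\mathcal{A}_{\eta/2}\Supset\operatorname{supp}\nabla\chi_\eta$, and each carries a factor that vanishes as $\eta\to 0$ either through $|U_\eta|\to 0$ or through H\"older interpolation against the $L^{p}$--bounds on $\mathcal{A}_{\eta/2}$. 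A lower bound $\int p^{\lambda,\delta}_{w_{\Delta t}}(\varrho+b)\chi_\eta \gtrsim \int p^{\lambda,\delta}_{w_{\Delta t}}\chi_\eta$ on the pressure-dominant part of $U_\eta$, together with an absorbing argument on the low-density part using the uniform $L^1$ bound for $p^{\lambda,\delta}_{w_{\Delta t}}$ from \eqref{Delta-t-uni-bound}, then yields the required smallness.

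\emph{Main obstacle.} The delicate point is the uniform-in-$\Delta t$ construction of $\mathcal{B}$: the curve $\Gamma_{w_{\Delta t}}(t)$ depends on the discretization, so I need a Lipschitz atlas on its tubular neighborhoods whose geometric constants depend only on the limit displacement $w$. Luckily \eqref{embedding-Delta-t} provides a uniform $C^{1,2\theta-3/2}$ bound on $w_{\Delta t}$ for any $3/4<\theta<1$, so the family $\{\Gamma_{w_{\Delta t}}(t)\}$ is uniformly Lipschitz and the Bogovski\u{\i} operator norm can be bounded independently of $\Delta t$, exactly as in \cite[Lemma 6.4]{Breit-Sebastian-ARMA}. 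A secondary nuisance is the penalization term $-\delta\int(\bfv-\tau_{\Delta t}\bfv)\cdot\bmpsi/\Delta t$, which I intend to handle by combining $\eqref{Delta-t-uni-bound}_1$ with the time-shift identity \eqref{Delta-t-v} before inserting $\bmphi_\eta$.
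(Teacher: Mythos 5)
Your geometric reduction (define the tube $U_\eta$, use the $C^{1/5}$-convergence of $w_{\Delta t}$ to trap all the discrete interfaces inside $U_\eta$ for $\Delta t$ small, take $\mathcal{A}_\epsilon$ as the complement) is exactly what the paper intends; the paper itself gives no details and simply defers to \cite[Lemma 6.4]{Breit-Sebastian-ARMA}. The problem is in your core analytic step, the choice of test function.

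First, admissibility: in the coupled momentum formulation \eqref{sub-fluid-momentum-equ-app} the fluid test function must satisfy $\bmphi|_{\Gamma_{w_{\Delta t}}(t)}=\psi\,\bfn$, i.e.\ its trace on the moving interface must be purely normal and matched by a structure test function. A Bogovski\u{\i} field $\mathcal{B}\bigl(\chi_\eta(\varrho_{\Delta t}+b_{\Delta t})-\langle\cdot\rangle\bigr)$ vanishes only on the boundary of the domain on which $\mathcal{B}$ is built; on the interface itself, which lies in the interior of your tube, its trace is generically neither zero nor normal, so it is not an admissible test function and the penalization/structure terms cannot even be written down. Second, even ignoring admissibility, your error estimates are either circular or degenerate: the higher integrability \eqref{pressure-int} holds only on compact sets at a fixed positive distance from $\Gamma_T^w$, with a constant depending on that set, so it cannot be invoked on $\operatorname{supp}\nabla\chi_\eta\subset U_{2\eta}\setminus U_\eta$, which shrinks onto the interface as $\eta\to 0$ — ruling out exactly that concentration is the content of the lemma. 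And if instead you build $\mathcal{B}$ on the thin tube itself, its operator norm blows up as the tube degenerates, so the bounds are not uniform in $\eta$.

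The argument the paper points to avoids both issues by using a test function of normal-extension type, $\bmphi_\kappa=\xi(t)\,f\bigl(d_{w_{\Delta t}}(t,x)/\kappa\bigr)\nabla d_{w_{\Delta t}}$, where $d_{w_{\Delta t}}$ is a regularized signed distance to $\Gamma_{w_{\Delta t}}(t)$ (uniformly $C^{1,\alpha}$ in $\Delta t$ by \eqref{embedding-Delta-t}). Its trace on the interface is $\xi\,\bfn$, hence admissible with $\psi=\xi$; it satisfies $\|\bmphi_\kappa\|_\infty\leq C$, $\|\nabla\bmphi_\kappa\|_\infty\leq C/\kappa$, and $\rmdiv\,\bmphi_\kappa\geq c/\kappa$ on $U_\kappa$. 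Testing the momentum equation then yields
\begin{align*}
\int_{U_\kappa} p^{\lambda,\delta}_{w_{\Delta t}} \;\leq\; C\int_{U_{2\kappa}}\bigl(|\varrho_{\Delta t}\bu_{\Delta t}\otimes\bu_{\Delta t}|+|\bbS^{\omega}_{w_{\Delta t}}|\bigr)+C\kappa ,
\end{align*}
and the right-hand side tends to $0$ uniformly in $\Delta t$ because the convective and viscous terms are bounded in $L^q$ for some $q>1$ by \eqref{Delta-t-varrho-bu-2} and $\eqref{Delta-t-uni-bound}_7$ (hence equi-integrable) while $|U_{2\kappa}|\to 0$. No pressure integrability near the interface is ever used. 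I recommend replacing your Bogovski\u{\i} construction by this one; your density splitting and the treatment of the radiative and penalization terms can then be dropped or become routine.
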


\subsection{Strong convergence of the temperature}\label{Section-strong-conv-temp-Delta-t} 
In order to apply the Div-Curl lemma (see for instance \cite[Theorem 10.12]{FN-09}, or \cite{Ta-79}), we set 
\begin{align}
&{\bf{Z}}_{\Delta t}:=\bigg(\varrho_{\Delta t}s^{\lambda}_{w_{\Delta t}}(t)(\varrho_{\Delta t}, \vartheta_{\Delta t}), \hspace{0.3cm}\varrho_{\Delta t}s^{\lambda}_{w_{\Delta t}}(\varrho_{\Delta t}, \vartheta_{\Delta t}) \bu_{\Delta t}-\frac{\kappa^{\zeta}_{w_{\Delta t}}(\vartheta_{\Delta t})\nabla\vartheta_{\Delta t}}{\vartheta_{\Delta t}}\bigg),\label{Z-fun}\\
&{{\bf{G}}_{\Delta t}}:=(G(\vartheta_{\Delta t}),0,0), \label{G-fun}
\end{align}
where $G(\cdot)$ in $\eqref{G-fun}$ is a bounded and Lipschitz function on $[0,\infty)$. 

To avoid dealing with the limit on the boundary $\Gamma_{w_{\Delta t}}(t)$, let us consider a domain $\mathcal{O}\Subset (0,T)\times\mathsf{B}$ such that $\mathcal{O}\cap ((0,T)\times \Gamma_{w_{\Delta t}}(t))   =\emptyset$ for all $\Delta t\leq \widetilde{\Delta t}$, where $\widetilde{\Delta t}$ is specified in Lemma \ref{exc-pre-1}.

According to $\eqref{Delta-t-entropy}$ and $\eqref{Delta-t-uni-bound}_7$, we know that ${\bf{Z}}_{\Delta t}\in L^p(\mathcal{O})$ for some $p>1$, and $\mathrm {Curl}_{t,x}{\bf{G}}_{\Delta t}\in L^2(\mathcal{O})$ is compact in $W^{-1, q}(\mathcal{O})$ for some $q>1$. We further prove the pre-compactness of $\mathrm{Div}_{t,x}{\bf{Z}}_{\Delta t}$. Precisely, for any $\psi\in C_c^\infty(\mathcal{O})$, it follows from $\eqref{entropy-ine-B}$ and $\eqref{Delta-t-uni-bound}_6$ that 
\begin{align*}
\int_{\mathcal{O}}\mathrm{Div}_{t,x}{\bf{Z}}_{\Delta t}\psi=-\langle\mathcal{D}^{\omega,\zeta}_{w_{\Delta t}};\psi\rangle_{[\mathcal{M};C](\mathcal{O})}+\int_{\mathcal{O}}\zeta \vartheta^4_{\Delta t}\psi\leq C\|\psi\|_{C(\mathcal{O})}\leq C\|\psi\|_{W^{k,l}(\mathcal{O})}, 
\end{align*}
 for some $k>1$ and $l>3$. Thus, it has 
\begin{align*}
    \mathrm{Div}_{t,x}{\bf{Z}}_{\Delta t}\in W^{-k, l^*}(\mathcal{O})\Subset W^{-1, l'}(\mathcal{O}), \hspace{0.3cm}\mathrm{with}\hspace{0.3cm}  l'\in (1,\frac{3}{2}). 
\end{align*}
Since, for any domain $\tilde{\mathcal{O}}\Subset (0,T)\times\mathsf{B}$ satisfying $\tilde{\mathcal{O}}\cap ((0,T)\times \Gamma_{w_{\Delta t}}(t))  =\emptyset$, we can find a domain $\mathcal O$ as above such that  $\tilde{\mathcal{O}}\subset \mathcal{O}$ (for any $\Delta t \leq \widetilde{\Delta t}$),   we can conclude by Div-Curl lemma that
\begin{align}\label{Delta-t-s-G}
\overline{\varrho s^{\lambda}_{w}(\varrho,\vartheta)G(\varrho)}=\overline{\varrho s^{\lambda}_{w}(\varrho,\vartheta)}\hspace{0.1cm} \overline{G(\varrho)}, \hspace{0.3cm} \mathrm{a.e.}\hspace{0.1cm}\mathrm{in}\hspace{0.1cm} (0,T)\times\mathsf{B}. 
  \end{align}

In order to get the strong convergence of $\vartheta_{\Delta t}$, we first use the parameterized Young measures theory (see \cite[Theorem 10.21]{FN-09}) to get the following fact
\begin{align}\label{Delta-t-s-M-G}
\overline{\varrho s_M(\varrho,\vartheta)G(\varrho)}\geq \overline{\varrho s_M(\varrho,\vartheta)}\overline{G(\varrho)}, \hspace{0.2cm} \overline{\vartheta^3G(\vartheta)}\geq \overline{\vartheta^3}\hspace{0.1cm}\overline{G(\vartheta)} \hspace{0.3cm} \mathrm{a.e.}\hspace{0.1cm}\mathrm{in}\hspace{0.1cm} (0,T)\times\mathsf{B}.
  \end{align}
We then use $\eqref{Delta-t-s-G}$, $\eqref{Delta-t-s-M-G}$, and  $\eqref{entropy-w-lambda}$ to get 
\begin{align*}
\overline{\vartheta^4}= \overline{\vartheta^3}\vartheta.
\end{align*}
Subsequently, we use the monotonicity of $\vartheta^4$ to get 
\begin{align}\label{strong-convergence-vartheta-t}
\vartheta_{\Delta t}\to \vartheta \hspace{0.3cm} \mathrm{a.e.}\hspace{0.1cm} \mathrm{in}\hspace{0.1cm} (0,T)\times\mathsf{B}.
\end{align}

\subsection{Continuity of velocity and temperature on the interface} 
We need to proof the limits of $w$, $\bu$, $\theta$, $\vartheta$ derived in Section $\ref{limit-Delta-t}$ satisfy the property of the continuity on the interface in the sense of Definition $\ref{fixed-domain}$. Referring to $\eqref{Delta-t-uni-bound}_1$, we observe that $\bu_{\Delta t}|_{\Gamma_{w_{\Delta t}}}-(w_{\Delta t })_t\bfn \rightarrow 0$ and $\vartheta_{\Delta t}|_{\Gamma_{w_{\Delta t}}}-\theta_{\Delta t} \rightarrow 0$ as $\Delta t \rightarrow 0$. According to $\eqref{Delta-w-theta}_3$ and $\eqref{Delta-w-theta}_4$, it remains to proof that $(\bu_{\Delta t}) \circ \widetilde{\bmphi_{w_{\Delta t}}} \rightharpoonup \bu \circ \widetilde{\bmphi_{w}}$ and $(\vartheta_{\Delta t}) \circ \widetilde{\bmphi_{w_{\Delta t}}} \rightharpoonup \vartheta \circ \widetilde{\bmphi_{w}}$ in $L^1((0,T)\times\mathbb{R}^2)$ when $\bu_{\Delta t}$ and $\vartheta_{\Delta t}$ are extended by zero in $\mathbb{R}^2\setminus \mathsf{B}$. The idea of the proof will be similar to the analysis presented in Section \ref{Sec-con-interface} where the limit of $\delta\to 0$ is analyzed.


\subsection{Strong convergence of the fluid density and magnetic field}\label{Section-strong-conv-dens-mag-Delta-t}
For the convenience of notation, we define 
$$\mathsf{d}_{\Delta t}=\varrho_{\Delta t}+b_{\Delta t}, \ \  \mathsf{d}=\varrho + b, $$
and 
$$(\mathcal{R}_{\varrho_{\Delta t}},\mathcal{R}_{b_{\Delta t}})=\left(\frac{\varrho_{\Delta t}}{\mathsf{d}_{\Delta t}}, \frac{b_{\Delta t}}{\mathsf{d}_{\Delta t}}\right) \  \textnormal{ if } \mathsf{d}_{\Delta t} \neq 0, \ \ (\mathcal{R}_{\varrho},\mathcal{R}_{b})=\left(\frac{\varrho}{\mathsf{d}}, \frac{b}{\mathsf{d}}\right) \  \textnormal{ if } \mathsf{d} \neq 0 .$$ 
Thus, it has $0\leq \mathcal{R}_{\varrho_{\Delta t}},\mathcal{R}_{b_{\Delta t}}, \mathcal{R}_{\varrho},\mathcal{R}_{b}\leq 1$.  Apparently, the following decomposition holds 
\begin{align}\label{D-pressure}
(\varrho_{\Delta t})^\gamma+\frac{1}{2}(b_{\Delta t})^2=&(\mathcal{R}_{\varrho_{\Delta t}}\mathsf{d}_{\Delta t})^\gamma+\frac{1}{2}(\mathcal{R}_{b_{\Delta t}}\mathsf{d}_{\Delta t})^2-\bigg((\mathcal{R}_{\varrho}\mathsf{d}_{\Delta t})^\gamma+\frac{1}{2}(\mathcal{R}_{b}\mathsf{d}_{\Delta t})^2\bigg)\\
&+\bigg((\mathcal{R}_{\varrho}\mathsf{d}_{\Delta t})^\gamma+\frac{1}{2}(\mathcal{R}_{b}\mathsf{d}_{\Delta t})^2\bigg),\notag
\end{align}
\begin{align}\label{sum-b-varrho}
\varrho_{\Delta t}+b_{\Delta t}=(\mathcal{R}_{\varrho}+\mathcal{R}_{b})\mathsf{d}_{\Delta t}+(\mathcal{R}_{\varrho_{\Delta t}}-\mathcal{R}_{\varrho}+\mathcal{R}_{b_{\Delta t}}-\mathcal{R}_{b})\mathsf{d}_{\Delta t}. 
\end{align}

\begin{lemma}
    For any $t\in (0,T]$, domain $\mathcal O \Subset [0,t] \times \mathsf{B}$  such that $\mathcal O \cap ((0,t) \times \Gamma_{w_{\Delta t}}) = \emptyset$, and $\varphi \in C(\overline{\mathcal O})$ with $\varphi\geq 0$, it holds that 
 \begin{align}\label{weak-press}
   & \lim_{\Delta t\rightarrow 0}\iint_{\mathcal{O} }\varphi(\varrho+b)\bigg((\varrho_{\Delta t})^\gamma+\frac{1}{2}(b_{\Delta t})^2\bigg)\notag\\
   &\leq \lim_{\Delta t\rightarrow 0}   \iint_{\mathcal{O}} \varphi(\varrho_{\Delta t}+b_{\Delta t}) \bigg((\varrho_{\Delta t})^\gamma+\frac{1}{2}(b_{\Delta t})^2\bigg).
    \end{align}
    
\end{lemma}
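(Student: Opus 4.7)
After cancellation the claim is equivalent to the lower-semicontinuity statement
\[
\lim_{\Delta t \to 0}\iint_{\mathcal{O}} \varphi\, (\mathsf{d}_{\Delta t} - \mathsf{d})\bigg(\varrho_{\Delta t}^\gamma + \tfrac{1}{2} b_{\Delta t}^2\bigg) \ \geq \ 0.
\]
The plan is to reduce this two-density inequality to a one-variable monotonicity statement in $\mathsf{d}_{\Delta t}$ by exploiting the algebraic decomposition \eqref{D-pressure}. Writing $\varrho_{\Delta t}^\gamma + \tfrac{1}{2}b_{\Delta t}^2 = P_{\mathcal{R}}(\mathsf{d}_{\Delta t}) + E_{\Delta t}$, with $P_{\mathcal{R}}(s) := \mathcal{R}_\varrho^\gamma s^\gamma + \tfrac{1}{2}\mathcal{R}_b^2 s^2$ a strictly increasing function of $s$ whose coefficients depend only on the \emph{limit} mass fractions $\mathcal{R}_\varrho, \mathcal{R}_b$ (and are therefore constant in $\Delta t$), the residual
\[
E_{\Delta t} := \big(\mathcal{R}_{\varrho_{\Delta t}}^\gamma - \mathcal{R}_\varrho^\gamma\big)\mathsf{d}_{\Delta t}^\gamma + \tfrac{1}{2}\big(\mathcal{R}_{b_{\Delta t}}^2 - \mathcal{R}_b^2\big)\mathsf{d}_{\Delta t}^2
\]
collects the discrepancy between the approximate and limit mass fractions.

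For the monotone part, I will invoke Chebyshev's correlation inequality applied to the one-dimensional Young measure $\nu_{t,x}$ of $\mathsf{d}_{\Delta t}$: since $s \mapsto s$ and $s \mapsto P_{\mathcal{R}}(s)$ are both non-decreasing, one obtains pointwise
\[
\overline{\mathsf{d}_{\Delta t}\, P_{\mathcal{R}}(\mathsf{d}_{\Delta t})}(t,x) \ \geq \ \mathsf{d}(t,x)\cdot \overline{P_{\mathcal{R}}(\mathsf{d}_{\Delta t})}(t,x),
\]
which, after multiplication by the non-negative test function $\varphi$ and integration, delivers the desired inequality for this clean piece. The equi-integrability supplied by the Bogovski\u{\i}-type estimate \eqref{pressure-int}---valid precisely because $\mathcal{O}$ stays away from the interface $\Gamma_{w_{\Delta t}}$---justifies every weak passage to the limit in all the products involved.

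It remains to control $\iint \varphi(\mathsf{d}_{\Delta t}-\mathsf{d})E_{\Delta t}$. Using the pointwise identities $\mathcal{R}_{\varrho_{\Delta t}}\mathsf{d}_{\Delta t} = \varrho_{\Delta t}$ and $\mathcal{R}_{b_{\Delta t}}\mathsf{d}_{\Delta t} = b_{\Delta t}$, the residual can be rewritten as $E_{\Delta t} = \big(\varrho_{\Delta t}^\gamma - \mathcal{R}_\varrho^\gamma \mathsf{d}_{\Delta t}^\gamma\big) + \tfrac{1}{2}\big(b_{\Delta t}^2 - \mathcal{R}_b^2 \mathsf{d}_{\Delta t}^2\big)$; expanding the product with $\mathsf{d}_{\Delta t}-\mathsf{d}$ and passing to weak limits produces a finite collection of terms of the form $\overline{\mathsf{d}^{\gamma+1}}, \overline{\mathsf{d}^3}, \overline{\varrho^\gamma}, \overline{b^2}$ (and cross products with $\mathcal{R}_\varrho^\gamma, \mathcal{R}_b^2$), each of which is a one-dimensional monotone pairing in $\mathsf{d}_{\Delta t}$ that can be rearranged via Chebyshev. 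The main technical obstacle is precisely this regrouping: because the mass fractions $\mathcal{R}_{\varrho_{\Delta t}}, \mathcal{R}_{b_{\Delta t}}$ are not known to converge strongly, the residual cannot simply be shown to vanish and must instead be absorbed into the monotone contribution through a careful structure-preserving recombination, modelled on the two-fluid analysis of \cite{KMN-24}.
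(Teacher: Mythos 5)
Your overall architecture is the one the paper actually uses: the same splitting of $\varrho_{\Delta t}^\gamma+\tfrac12 b_{\Delta t}^2$ into the ``frozen-fraction'' part $P_{\mathcal R}(\mathsf d_{\Delta t})=(\mathcal R_\varrho\mathsf d_{\Delta t})^\gamma+\tfrac12(\mathcal R_b\mathsf d_{\Delta t})^2$ plus a residual (this is exactly \eqref{D-pressure} combined with \eqref{sum-b-varrho}), and the same Chebyshev/monotone-pairing inequality for the weak limits of products of non-decreasing functions of the single scalar $\mathsf d_{\Delta t}$, justified by the equi-integrability coming from \eqref{pressure-int} on compact sets away from the interface. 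That part of your proposal is sound and coincides with the paper's steps \eqref{multiply-varho-b-1}--\eqref{multiply-varho-b-2}.

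The gap is in your treatment of the residual $E_{\Delta t}$ (and of the fraction-difference term hidden in \eqref{sum-b-varrho}). You assert that the mass fractions ``are not known to converge strongly'' and that the residual therefore ``cannot simply be shown to vanish'' but must be ``absorbed into the monotone contribution'' by a Chebyshev-type recombination. Both claims are wrong. First, the cross terms you would generate, e.g. $\overline{\mathsf d_{\Delta t}\,\varrho_{\Delta t}^\gamma}=\overline{\mathcal R_{\varrho_{\Delta t}}^\gamma\mathsf d_{\Delta t}^{\gamma+1}}$, are genuinely two-variable objects (they see the joint oscillations of $\mathsf d_{\Delta t}$ and $\mathcal R_{\varrho_{\Delta t}}$), so the one-dimensional correlation inequality does not apply to them; moreover $E_{\Delta t}$ has no sign, so no monotone rearrangement can absorb it — it must be shown to vanish. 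Second, it \emph{does} vanish, and the mechanism you are missing is Lemma \ref{strong-con} (the almost-compactness property inherited from the transport structure of the continuity and magnetic equations, following \cite{Wen-21,KMN-24}): it gives the weighted strong convergence $\iint\mathsf d_{\Delta t}\,|\mathcal R_{\varrho_{\Delta t}}-\mathcal R_\varrho|^{p}\to 0$ for every finite $p$, which, combined with H\"older's inequality and the bound \eqref{d-beta} (a consequence of $\beta$ being large relative to $\gamma$ and $2$), kills both error terms — this is exactly the estimate of $\mathcal I_2$ and $\mathcal I_3$ in the paper's proof, and the analogous term $\mathcal J$ needed to identify $\overline{\overline{P_{\mathcal R}(\mathsf d)}}$ with $\overline{\varrho^\gamma+\tfrac12 b^2}$. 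Without invoking this weighted compactness your argument does not close.
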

\begin{proof}
Using $\eqref{D-pressure}$ and $\eqref{sum-b-varrho}$, we have 
\begin{equation}\label{Eq-I-i}
  \begin{aligned}
&\iint_{\mathcal{O}}\varphi(\varrho_{\Delta t}+b_{\Delta t}) \bigg((\varrho_{\Delta t})^\gamma+\frac{1}{2}(b_{\Delta t})^2\bigg)\\
&=\iint_{\mathcal{O}}\varphi(\mathcal{R}_{\varrho}+\mathcal{R}_{b})\mathsf{d}_{\Delta t}\bigg((\mathcal{R}_{\varrho}\mathsf{d}_{\Delta t})^\gamma+\frac{1}{2}(\mathcal{R}_{b}\mathsf{d}_{\Delta t})^2\bigg)\\
&+\iint_{\mathcal{O}}\varphi(\mathcal{R}_{\varrho_{\Delta t}} -\mathcal{R}_{\varrho}+\mathcal{R}_{b_{\Delta t}}-\mathcal{R}_{b})\mathsf{d}_{\Delta t}\bigg((\mathcal{R}_{\varrho}\mathsf{d}_{\Delta t})^\gamma+\frac{1}{2}(\mathcal{R}_{b}\mathsf{d}_{\Delta t})^2\bigg)\\
&+\iint_{\mathcal{O}}\varphi(\varrho_{\Delta t}+b_{\Delta t})\bigg[(\mathcal{R}_{\varrho_{\Delta t}}\mathsf{d}_{\Delta t})^\gamma+\frac{1}{2}(\mathcal{R}_{b_{\Delta t}}\mathsf{d}_{\Delta t})^2-\bigg((\mathcal{R}_{\varrho}\mathsf{d}_{\Delta t})^\gamma+\frac{1}{2}(\mathcal{R}_{b}\mathsf{d}_{\Delta t})^2\bigg)\bigg]\\
&:=\sum_{i=1}^3\mathcal{I}_{i}.
\end{aligned}
\end{equation} 

We shall estimate the terms $\mathcal I_2$ and $\mathcal I_3$. Since we have assumed that $\beta>\{\gamma, 2\}$,  there exists  sufficiently large integer $k_1$ such that $\beta \geq \bigg\{\gamma\frac{k_1}{k_1-1}, 2\frac{k_1}{k_1-1}\bigg\}$; see for instance, \cite{Wen-21,VWY-19}, where such result has also been utilized.
 Moreover, it follows from $\eqref{pressure-int}$ that 
  \begin{align}\label{d-beta}
\iint_{\mathcal{O}}\mathsf{d}_{\Delta t}\bigg|\mathsf{d}_{\Delta t}^{\gamma\frac{k_1}{k_1-1}}+\mathsf{d}_{\Delta t}^{2\frac{k_1}{k_1-1}}\bigg|\leq  C.
\end{align} 
    
We use H\"older's inequality, $\eqref{d-beta}$, and the property $0\leq \mathcal{R}_{\varrho}, \mathcal{R}_b\leq1$ to derive 
 \begin{align*}
|\mathcal{I}_2|\leq &C \bigg(\iint_{\mathcal O}\mathsf{d}_{\Delta t}|\mathcal{R}_{\varrho_{\Delta t}}-\mathcal{R}_{\varrho}|^{k_1}\mathsf{d}_{\Delta t}\bigg)^{\frac{1}{k_1}}\bigg[\iint_{\mathcal{O}}\mathsf{d}_{\Delta t}\bigg((\mathcal{R}_{\varrho}\mathsf{d}_{\Delta t})^\gamma+(\mathcal{R}_{b}\mathsf{d}_{\Delta t})^2\bigg)^{\frac{k_1}{k_1-1}}\bigg]^{\frac{k_1-1}{k_1}}\\
&+C\bigg(\iint_{\mathcal{O}}\mathsf{d}_{\Delta t}|\mathcal{R}_{b_{\Delta t}} -\mathcal{R}_{b}|^{k_1}\mathsf{d}_{\Delta t}\bigg)^{\frac{1}{k_1}}\bigg[\iint_{\mathcal{O}}\mathsf{d}_{\Delta t}\bigg((\mathcal{R}_{\varrho}\mathsf{d}_{\Delta t})^\gamma+(\mathcal{R}_{b}\mathsf{d}_{\Delta t})^2\bigg)^{\frac{k_1}{k_1-1}}\bigg]^{\frac{k_1-1}{k_1}}\\
\leq &C\bigg(\iint_{\mathcal{O}}\mathsf{d}_{\Delta t}|\mathcal{R}_{\varrho_{\Delta t}}-\mathcal{R}_{\varrho}|^{k_1}\mathsf{d}_{\Delta t}\bigg)^{\frac{1}{k_1}}\bigg(\iint_{\mathcal{O}}\mathsf{d}_{\Delta t}\bigg|\mathsf{d}_{\Delta t}^{\gamma\frac{k_1}{k_1-1}}+\mathsf{d}_{\Delta t}^{2\frac{k_1}{k_1-1}}\bigg|\bigg)^{\frac{k_1-1}{k_1}}\\
&+C\bigg(\iint_{\mathcal{O}}\mathsf{d}_{\Delta t}|\mathcal{R}_{b_{\Delta t}}-\mathcal{R}_{b}|^{k_1}\mathsf{d}_{\Delta t}\bigg)^{\frac{1}{k_1}}\bigg(\iint_{\mathcal{O}}\mathsf{d}_{\Delta t}\bigg|\mathsf{d}_{\Delta t}^{\gamma\frac{k_1}{k_1-1}}+\mathsf{d}_{\Delta t}^{2\frac{k_1}{k_1-1}}\bigg|\bigg)^{\frac{k_1-1}{k_1}}\\
\leq  &C\bigg(\iint_{\mathcal{O}}\mathsf{d}_{\Delta t}|\mathcal{R}_{\varrho_{\Delta t}}-\mathcal{R}_{\varrho}|^{k_1}\mathsf{d}_{\Delta t}\bigg)^{\frac{1}{k_1}}+C\bigg(\iint_{\mathcal{O}}\mathsf{d}_{\Delta t}|\mathcal{R}_{b_{\Delta t}}-\mathcal{R}_{b}|^{k_1}\mathsf{d}_{\Delta t}\bigg)^{\frac{1}{k_1}}.
\end{align*} 

Applying Lemma \ref{strong-con}, as $\Delta t\rightarrow 0$, we can conclude 
\begin{align*}
\bigg(\iint_{\mathcal{O}}\mathsf{d}_{\Delta t}|\mathcal{R}_{\varrho_{\Delta t}}-\mathcal{R}_{\varrho}|^{k_1}\mathsf{d}_{\Delta t}\bigg)^{\frac{1}{k_1}}\rightarrow 0, \quad \bigg(\iint_{\mathcal{O}}\mathsf{d}_{\Delta t}|\mathcal{R}_{b_{\Delta t}}-\mathcal{R}_{b}|^{k_1}\mathsf{d}_{\Delta t}\bigg)^{\frac{1}{k_1}}\rightarrow 0, 
   \end{align*} 
which gives 
$$\mathcal I_2 \to 0 \text{ as } \Delta t \to 0.$$

Next, we look into the term $\mathcal I_3$. 
Using the mean value theorem and Young's inequality, we get  
\begin{align}\label{minus-R-d}
&\bigg|(\mathcal{R}_{\varrho_{\Delta t}}\mathsf{d}_{\Delta t})^\gamma+\frac{1}{2}(\mathcal{R}_{b_{\Delta t}}\mathsf{d}_{\Delta t})^2-\bigg((\mathcal{R}_{\varrho}\mathsf{d}_{\Delta t})^\gamma+\frac{1}{2}(\mathcal{R}_{b}\mathsf{d}_{\Delta t})^2\bigg)\bigg|\notag\\
&\leq C\bigg(1+(\varrho_{\Delta t}+b_{\Delta t})^\gamma+(\varrho_{\Delta t}+b_{\Delta t})^2\bigg)(|\mathcal{R}_{\varrho_{\Delta t}}\mathsf{d}_{\Delta t}-\mathcal{R}_{\varrho}\mathsf{d}_{\Delta t}|+|\mathcal{R}_{b_{\Delta t}}\mathsf{d}_{\Delta t}-\mathcal{R}_{b}\mathsf{d}_{\Delta t}|)\notag\\
&\leq C\bigg(1+(\mathsf{d}_{\Delta t})^{\tilde{\gamma}}\bigg)(|\mathcal{R}_{\varrho_{\Delta t}}\mathsf{d}_{\Delta t}-\mathcal{R}_{\varrho}\mathsf{d}_{\Delta t}|+|\mathcal{R}_{b_{\Delta t}}\mathsf{d}_{\Delta t}-\mathcal{R}_{b}\mathsf{d}_{\Delta t}|), 
\end{align} 
where $\tilde{\gamma}=\max\{\gamma, 2\}$.

Under the assumption $\beta> \max\{\gamma+1, 3\}$, we can further find a sufficiently large integer $k_2$ such that $\beta+1>\bigg(\tilde{\gamma}+2-\frac{1}{k_2}\bigg)\frac{k_2}{k_2-1}$.
Then, 
by $\eqref{minus-R-d}$, $\eqref{d-beta}$, and H\"older's inequality, it holds that 
\begin{align*}
\mathcal{I}_3\leq & C \iint_{\mathcal{O}}\bigg(1+(\mathsf{d}_{\Delta t})^{\tilde{\gamma}+1}\bigg)(|\mathcal{R}_{\varrho_{\Delta t}}\mathsf{d}_{\Delta t}-\mathcal{R}_{\varrho}\mathsf{d}_{\Delta t}|+|\mathcal{R}_{b_{\Delta t}}\mathsf{d}_{\Delta t}-\mathcal{R}_{b}\mathsf{d}_{\Delta t}|)\\
\leq &C\iint_{\mathcal{O}}\bigg((\mathsf{d}_{\Delta t})^{\frac{1}{2}}(\mathsf{d}_{\Delta t})^{\frac{1}{2}}+(\mathsf{d}_{\Delta t})^{\tilde{\gamma}+2-\frac{1}{k_2}}(\mathsf{d}_{\Delta t})^{\frac{1}{k_2}}\bigg)|\mathcal{R}_{\varrho_{\Delta t}}-\mathcal{R}_{\varrho}|\\
&+C\iint_{\mathcal{O}}\bigg((\mathsf{d}_{\Delta t})^{\frac{1}{2}}(\mathsf{d}_{\Delta t})^{\frac{1}{2}}+(\mathsf{d}_{\Delta t})^{\tilde{\gamma}+2-\frac{1}{k_2}}(\mathsf{d}_{\Delta t})^{\frac{1}{k_2}}\bigg)|\mathcal{R}_{b_{\Delta t}}-\mathcal{R}_{b}|\\
\leq &C\bigg(\iint_{\mathcal{O}}\mathsf{d}_{\Delta t}\bigg)^{\frac{1}{2}}\bigg(\iint_{\mathcal{O}}\mathsf{d}_{\Delta t}|\mathcal{R}_{\varrho_{\Delta t}}-\mathcal{R}_{\varrho}|^2\bigg)^{\frac{1}{2}}\\
&+C\bigg(\iint_{\mathcal{O}}(\mathsf{d}_{\Delta t})^{(\tilde{\gamma}+2-\frac{1}{k_2})\frac{k_2}{k_2-1}}\bigg)^{\frac{k_2-1}{k_2}}\bigg(\iint_{\mathcal{O}}\mathsf{d}_{\Delta t}|\mathcal{R}_{\varrho_{\Delta t}}-\mathcal{R}_{\varrho}|^{k_2}\bigg)^{\frac{1}{k_2}}\\
&+C\bigg(\iint_{\mathcal{O}}\mathsf{d}_{\Delta t}\bigg)^{\frac{1}{2}}\bigg(\iint_{\mathcal{O}}\mathsf{d}_{\Delta t}|\mathcal{R}_{b_{\Delta t}}-\mathcal{R}_{b}|^2\bigg)^{\frac{1}{2}}\\
&+C\bigg(\iint_{\mathcal{O}}(\mathsf{d}_{\Delta t})^{(\tilde{\gamma}+2-\frac{1}{k_2})\frac{k_2}{k_2-1}}\bigg)^{\frac{k_2-1}{k_2}}\bigg(\iint_{\mathcal{O}}\mathsf{d}_{\Delta t}|\mathcal{R}_{b_{\Delta t}}-\mathcal{R}_{b}|^{k_2}\bigg)^{\frac{1}{k_2}}\\
\leq &C\bigg(\iint_{\mathcal{O}}\mathsf{d}_{\Delta t}|\mathcal{R}_{\varrho_{\Delta t}}-\mathcal{R}_{\varrho}|^2\bigg)^{\frac{1}{2}}+C\bigg(\iint_{\mathcal{O}}\mathsf{d}_{\Delta t}|\mathcal{R}_{\varrho_{\Delta t}}-\mathcal{R}_{\varrho}|^{k_2}\bigg)^{\frac{1}{k_2}}\\
&+C\bigg(\iint_{\mathcal{O}}\mathsf{d}_{\Delta t}|\mathcal{R}_{b_{\Delta t}}-\mathcal{R}_{b}|^2\bigg)^{\frac{1}{2}}+C\bigg(\iint_{\mathcal{O}}\mathsf{d}_{\Delta t}|\mathcal{R}_{b_{\Delta t}}-\mathcal{R}_{b}|^{k_2}\bigg)^{\frac{1}{k_2}}\\
\rightarrow & \,  0 , 
\end{align*} 
as $\Delta t\rightarrow 0$, where we have also used  Lemma \ref{strong-con}.

Thus, collecting above estimates in \eqref{Eq-I-i}, we get 
 \begin{align*} 
&\lim_{\Delta t\rightarrow 0}\iint_{\mathcal{O}}\varphi(\varrho_{\Delta t}+b_{\Delta t}) \bigg((\varrho_{\Delta t})^\gamma+\frac{1}{2}(b_{\Delta t})^2\bigg)\\
&=\lim_{\Delta t\rightarrow 0}\iint_{\mathcal{O}}\varphi(\mathcal{R}_{\varrho}+\mathcal{R}_{b})\mathsf{d}_{\Delta t}\bigg((\mathcal{R}_{\varrho}\mathsf{d}_{\Delta t})^\gamma+\frac{1}{2}(\mathcal{R}_{b}\mathsf{d}_{\Delta t})^2\bigg). 
 \end{align*} 

Based on the fact that the functions $z\mapsto z$ and $z \mapsto (\mathcal{R}_{\varrho}z)^\gamma +(\mathcal{R}_{b}z)^2$ are non-decreasing, and using the notation $\overline{\overline{g(\cdot)}}$ to denote the weak limit of $g(\cdot)$ with respect to $\mathsf{d}_{\Delta t}$ as $\Delta t\rightarrow 0$, we get 
 \begin{align}\label{multiply-varho-b-1} 
&\lim_{\Delta t\rightarrow 0}\iint_{\mathcal{O}}\varphi(\mathcal{R}_{\varrho}+\mathcal{R}_{b})\mathsf{d}_{\Delta t}\bigg((\mathcal{R}_{\varrho}\mathsf{d}_{\Delta t})^\gamma+\frac{1}{2}(\mathcal{R}_{b}\mathsf{d}_{\Delta t})^2\bigg)\notag\\
&=\iint_{\mathcal{O}}\varphi(\mathcal{R}_{\varrho}+\mathcal{R}_{b})\overline{\overline{\mathsf{d}\bigg((\mathcal{R}_{\varrho}\mathsf{d})^\gamma+\frac{1}{2}(\mathcal{R}_{b}\mathsf{d})^2\bigg)}}\notag\\
&\geq\iint_{\mathcal{O}}\varphi(\mathcal{R}_{\varrho}+\mathcal{R}_{b})\mathsf{d}\overline{\overline{\bigg((\mathcal{R}_{\varrho}\mathsf{d})^\gamma+\frac{1}{2}(\mathcal{R}_{b}\mathsf{d})^2\bigg)}}\\
&=\iint_{\mathcal{O}}\varphi(\varrho+b)\overline{\overline{\bigg((\mathcal{R}_{\varrho}\mathsf{d})^\gamma+\frac{1}{2}(\mathcal{R}_{b}\mathsf{d})^2\bigg)}}.\notag
 \end{align} 

Moreover, it holds that 
 \begin{align}\label{multiply-varho-b-2} 
\iint_{\mathcal{O}} \varphi(\varrho+b)\overline{\overline{\bigg((\mathcal{R}_{\varrho}\mathsf{d})^\gamma+\frac{1}{2}(\mathcal{R}_{b}\mathsf{d})^2\bigg)}}=
\iint_{\mathcal{O}}\varphi(\varrho+b)\overline{\bigg(\varrho^\gamma+\frac{1}{2}b^2\bigg)}, 
 \end{align} 
which is based on the following analysis
 \begin{align*}
&\iint_{\mathcal{O}} \varphi(\varrho+b)\overline{\overline{\bigg((\mathcal{R}_{\varrho}\mathsf{d})^\gamma+\frac{1}{2}(\mathcal{R}_{b}\mathsf{d})^2\bigg)}}\\
&=\lim_{\Delta t\rightarrow 0 }\iint_{\mathcal{O}}\varphi(\varrho+b)\bigg((\mathcal{R}_{\varrho}\mathsf{d}_{\Delta t})^\gamma+\frac{1}{2}(\mathcal{R}_{b}\mathsf{d}_{\Delta t})^2\bigg)\\
&=\lim_{\Delta t\rightarrow 0 }\iint_{\mathcal{O}}\varphi(\varrho+b)\bigg((\varrho_{\Delta t})^\gamma+\frac{1}{2}(b_{\Delta t})^2\bigg)\\
&\hspace{0.5cm}+\underbrace{\lim_{\Delta t\rightarrow 0 }\iint_{\mathcal{O}}\varphi(\varrho+b)\bigg[\bigg((\mathcal{R}_{\varrho}\mathsf{d}_{\Delta t})^\gamma+\frac{1}{2}(\mathcal{R}_{b}\mathsf{d}_{\Delta t})^2\bigg)-\bigg((\varrho_{\Delta t})^\gamma+\frac{1}{2}(b_{\Delta t})^2\bigg)\bigg]}_{\mathcal{J}}.
 \end{align*} 
We mimic the estimate in $\mathcal{I}_3$, then derive that $\mathcal{J} \rightarrow 0$ as $\Delta t\rightarrow 0$. Thus, one  has $\eqref{multiply-varho-b-2}$. Combining it with $\eqref{multiply-varho-b-1}$, we conclude the proof.    
\end{proof}

A weak compactness identity for effective pressure $\widetilde p_M(\varrho, \vartheta, b)+\delta(\varrho+b)^{\beta}-(\mu(\vartheta)+\eta(\vartheta))\mathrm{div}\bu$ is established as in the following lemma. Since the proof is similar to \cite[Section 3.6.5]{FN-09}, we omit it for brevity. Only note that for test function $\psi\in C_c^\infty((0,T)\times(\mathsf{B}\setminus\Gamma_{w_{\Delta t}}(t)))$, there exists $\widetilde{\Delta t}$ such that for all $\Delta t\leq \widetilde{\Delta t}$ it holds  $\psi\in C_c^\infty((0,T)\times(\mathsf{B}\setminus\Gamma_{w_{\Delta t}}(t)))$.

\begin{lemma}
 For any 
$\psi\in C_c^\infty((0,T)\times(\mathsf{B}\setminus\Gamma_{w_{\Delta t}}(t)))$, it holds that 
  \begin{align}\label{Delta-t-EVF}
&\lim_{\Delta t\to 0}\int_{[0,T]\times\mathsf{B}}(\widetilde p_M(\varrho_{\Delta t}, \vartheta_{\Delta t}, b_{\Delta t})+\delta(\varrho_{\Delta t}+b_{\Delta t})^\beta -(\mu(\vartheta_{\Delta t})+\eta(\vartheta_{\Delta t}))\mathrm{div}\bu_{\Delta t})(\varrho_{\Delta t}+b_{\Delta t})\psi\notag \\
&=\int_{[0,T]\times\mathsf{B}}(\overline{\widetilde p_M(\varrho, \vartheta, b)+\delta(\varrho+b)^\beta}-(\mu(\vartheta)+\eta(\vartheta))\mathrm{div}\bu)(\varrho+b)\psi. 
\end{align}
\end{lemma}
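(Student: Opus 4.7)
The strategy follows the classical Lions--Feireisl weak compactness identity for the effective viscous flux, with the density replaced by the quantity $\mathsf{d}_{\Delta t}=\varrho_{\Delta t}+b_{\Delta t}$. The crucial structural observation is that, by adding \eqref{weak-continuity-app} and \eqref{weak-magnetic-induction-app}, $\mathsf{d}_{\Delta t}$ itself satisfies a continuity equation
\begin{equation*}
\partial_t \mathsf{d}_{\Delta t}+\rmdiv(\mathsf{d}_{\Delta t}\bu_{\Delta t})=0
\end{equation*}
in the distributional sense on $(0,T)\times\mathsf{B}$, with the renormalized version inherited from the corresponding renormalized equations of the two constituents. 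This is exactly the analogue of the density equation needed to implement the standard effective-flux argument.

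Since $\psi\in C_c^\infty((0,T)\times(\mathsf{B}\setminus\Gamma_{w_{\Delta t}}(t)))$, the uniform convergence $w_{\Delta t}\to w$ in $C^{1/5}([0,T]\times\Gamma)$ established in Section~\ref{limit-Delta-t} together with the smoothness of the flow map $\widetilde{\bmphi_{w}}$ implies that, for all sufficiently small $\Delta t$, $\mathrm{supp}\,\psi$ is contained in a fixed open set $\mathcal O\Subset(0,T)\times\mathsf{B}$ which stays away from $\Gamma_{w_{\Delta t}}(t)$ for every such $\Delta t$. On $\mathcal O$ we have $g_{w_{\Delta t}}^{\omega}\equiv 1$ (resp.\ $h_{w_{\Delta t}}^\zeta\equiv 1$, $f_{w_{\Delta t}}^\lambda\equiv 1$) or the analogous constant value, so that $\bbS_{w_{\Delta t}}^\omega=\bbS(\vartheta_{\Delta t},\nabla\bu_{\Delta t})$ there, and the penalization term at the shell in \eqref{sub-fluid-momentum-equ-app} does not interfere with our computations.

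The main step is to test the momentum equation \eqref{sub-fluid-momentum-equ-app} with the compactly supported vector field
\begin{equation*}
\bmphi_{\Delta t}(t,x)=\psi(t,x)\,\nabla\Delta^{-1}\bigl[\mathbf 1_{\mathsf{B}}\mathsf{d}_{\Delta t}\bigr],
\end{equation*}
where $\Delta^{-1}$ denotes the inverse Laplacian on $\R^2$ applied to the zero-extension (and a routine mollification is performed to guarantee admissibility as a test function, with corresponding boundary piece $\psi=0$ on $\Gamma$ since $\bmphi_{\Delta t}$ vanishes near $\Gamma_{w_{\Delta t}}$). A parallel computation is performed in the limit system, using $\nabla\Delta^{-1}[\mathbf 1_{\mathsf{B}}\mathsf{d}]$ with the limit continuity equation for $\mathsf{d}=\varrho+b$. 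Subtracting, isolating the product $\psi(\widetilde p_M+\delta\mathsf{d}_{\Delta t}^\beta-(\mu+\eta)\rmdiv\bu_{\Delta t})\mathsf{d}_{\Delta t}$, and passing to the limit, the only nontrivial terms are convective ones of the type $\varrho_{\Delta t}\bu_{\Delta t}\otimes\bu_{\Delta t}:\nabla\bmphi_{\Delta t}$ and commutators of the form
\begin{equation*}
\int_0^T\!\!\int_{\mathsf{B}}\psi\,\bu_{\Delta t}\cdot\Bigl(\mathsf{d}_{\Delta t}\nabla\Delta^{-1}[\mathsf{d}_{\Delta t}\rmdiv\bu_{\Delta t}]-\mathsf{d}_{\Delta t}\nabla\Delta^{-1}[\rmdiv(\mathsf{d}_{\Delta t}\bu_{\Delta t})]\Bigr),
\end{equation*}
and their limit analogues. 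These are handled exactly as in \cite[Section~3.6.5]{FN-09} via the Div--Curl lemma, using the strong convergence $\nabla\Delta^{-1}[\mathbf 1_{\mathsf B}\mathsf{d}_{\Delta t}]\to\nabla\Delta^{-1}[\mathbf 1_{\mathsf B}\mathsf{d}]$ in $C([0,T];L^q(\mathsf B))$ (a consequence of $\mathsf{d}_{\Delta t}\to\mathsf{d}$ in $C_{\mathrm{weak}}([0,T];L^{\min\{\gamma,2\}}(\mathsf B))$ combined with compact embedding), the strong convergence $\vartheta_{\Delta t}\to\vartheta$ a.e.\ from \eqref{strong-convergence-vartheta-t}, and the bounds \eqref{pressure-int}.

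The principal technical obstacle is to ensure compact support of the test function and the commutator identity in the presence of the moving interface, since the test fields constructed by Riesz potentials are nonlocal. The device of localizing by $\psi$ with $\mathrm{supp}\,\psi$ away from $\Gamma_{w_{\Delta t}}$ cleanly avoids the boundary coupling terms in \eqref{sub-fluid-momentum-equ-app}, and the uniform separation of $\mathrm{supp}\,\psi$ from $\Gamma_{w_{\Delta t}}$ (guaranteed by the Hölder convergence of $w_{\Delta t}$) means that the argument reduces to the fixed-domain case treated in \cite{FN-09}. Once this localization is in place, the remainder of the proof is a direct transcription of \cite[Section~3.6.5]{FN-09} with $\varrho$ replaced by $\mathsf{d}=\varrho+b$, and we omit the details.
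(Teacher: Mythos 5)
Your proposal matches the paper's approach exactly: the paper omits the proof, citing \cite[Section 3.6.5]{FN-09}, and notes only that the support of $\psi$ stays away from $\Gamma_{w_{\Delta t}}(t)$ for all sufficiently small $\Delta t$ — which is precisely your localization step, followed by the standard effective viscous flux argument with $\varrho$ replaced by $\mathsf{d}=\varrho+b$. Your write-up simply supplies more detail (the test field $\psi\,\nabla\Delta^{-1}[\mathbf 1_{\mathsf B}\mathsf{d}_{\Delta t}]$ and the commutator terms) than the paper chooses to record.
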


\vspace{.5cm}

Now, going back to $\eqref{Delta-t-EVF}$, we rewrite it as 
\begin{align}\label{Delta-t-EVF-2}
&\overline{[\widetilde p_M(\varrho, \vartheta, b)+\delta(\varrho+b)^{\beta}](\varrho+b)}- \overline{[\widetilde p_M(\varrho_, \vartheta, b)+\delta(\varrho+b)^{\beta}]}\hspace{0.1cm} (\varrho+b)\notag\\
&=(\mu(\vartheta)+\eta(\vartheta))(\overline{\mathrm{div}\bu(\varrho+b)}-\mathrm{div}\bu(\varrho+b)),\hspace{0.3cm} \mathrm{a.e.}\hspace{0.1cm} \mathrm{in}\hspace{0.1cm} (0,T)\times\mathsf{B}.
\end{align}
Using $\eqref{weak-press}$ and the strong convergence of $\vartheta_{\Delta t}$ in $\eqref{strong-convergence-vartheta-t}$, we obtain that 
\begin{align}\label{p-M-weak}
\overline{[\widetilde p_M(\varrho_, \vartheta, b)+\delta(\varrho+b)^{\beta}](\varrho+b)}- \overline{[\widetilde p_M(\varrho_, \vartheta, b)+\delta(\varrho+b)^{\beta}]}\hspace{0.1cm} (\varrho+b)\geq 0, \hspace{0.3cm} \mathrm{a.e.}\hspace{0.1cm} \mathrm{in}\hspace{0.1cm} (0,T)\times\mathsf{B}.
\end{align}
Furthermore, we combine $\eqref{Delta-t-EVF-2}$ and $\eqref{p-M-weak}$ to get 
\begin{align}\label{div-u-varrho-b}
\overline{\mathrm{div}\bu(\varrho+b)}\geq \mathrm{div}\bu(\varrho+b), \hspace{0.3cm} \mathrm{a.e.}\hspace{0.1cm} \mathrm{in}\hspace{0.1cm} (0,T)\times\mathsf{B}.
\end{align}

Actually, the limits $(\varrho, \bu)$ and $(b, \bu)$ are solutions to the corresponding renormalized equations in a fixed domain. Inspired by \cite[Lemma 4.4]{VWY-19}, we can also control $\varrho_{\Delta t}$ and $b_{\Delta t}$ in the form of $(\cdot)\log(\cdot)$ as follows:
\begin{align}\label{log-varrho-b-t}
&\int_{\mathsf{B}}(\varrho_{\Delta t}\log\varrho_{\Delta t}-\varrho\log\varrho+b_{\Delta t}\log b_{\Delta t}-b\log b)(t,\cdot)\notag\\
&\leq \int_0^t\int_{\mathsf{B}}(\varrho+b)\mathrm{div}\bu-\int_0^t\int_{\mathsf{B}}(\varrho_{\Delta t}+b_{\Delta t})\mathrm{div}\bu_{\Delta t},
\end{align}
for almost every $t\in(0,T)$.

Letting $\Delta t\rightarrow 0$ in $\eqref{log-varrho-b-t}$, and then using $\eqref{div-u-varrho-b}$, we derive 
\begin{align}\label{blogb-negative}
\int_{\mathsf{B}}(\overline{\varrho\log\varrho}-\varrho\log\varrho+\overline{b\log b}-b\log b)(t,\cdot)\leq 0.
\end{align}
The convexity of $z\mapsto z\log z$ leads to 
\begin{align}\label{blogb-positive}
\overline{\varrho\log\varrho}\geq \varrho\log\varrho, \quad \overline{b\log b}\geq b\log b.
\end{align}

It follows from $\eqref{blogb-negative}$ and $\eqref{blogb-positive}$ that 
\begin{align*}
\int_{\mathsf{B}}(\overline{\varrho\log\varrho}-\varrho\log\varrho+\overline{b\log b}-b\log b)(t,\cdot)= 0.
\end{align*}

Moreover, it holds that 
\begin{align*}
\overline{\varrho\log\varrho}=\varrho\log\varrho,\quad \overline{b\log b}=b\log b ,\hspace{0.3cm} \mathrm{a.e.}\hspace{0.1cm} \mathrm{in}\hspace{0.1cm} (0,T)\times\mathsf{B}, 
\end{align*}
which gives 
\begin{align}\label{varhho-b-t}
\varrho_{\Delta t}\rightarrow \varrho,\quad  b_{\Delta t }\rightarrow b, \hspace{0.3cm} \mathrm{a.e.}\hspace{0.1cm} \mathrm{in}\hspace{0.1cm} (0,T)\times\mathsf{B}.
\end{align}

With $\eqref{strong-convergence-vartheta-t}$ and $\eqref{varhho-b-t}$ in hand, we have 
\begin{align*} 
& \vartheta_{\Delta t}^4 \to \vartheta^4 \hspace{0,3cm} \mathrm{a.e.}\hspace{0.1cm}\mathrm{in}\hspace{0.1cm} (0,T)\times\mathsf{B},\notag\\
&\frac{\kappa^{\zeta}_{w_{\Delta t}}(\vartheta_{\Delta t})\nabla \vartheta_{\Delta t}}{\vartheta_{\Delta t}}\rightharpoonup \frac{\kappa^{\zeta}_{w}(\vartheta)\nabla \vartheta}{\vartheta}\hspace{0,3cm} \mathrm{weakly}\hspace{0.1cm}\mathrm{in}\hspace{0.1cm} L^{1}((0,T)\times\mathsf{B}),\notag\\
&\bbS_{w_{\Delta t}}^\omega(\vartheta_{\Delta t}, \nabla \bu_{\Delta t})\rightharpoonup \bbS_{w}^\omega(\vartheta, \nabla \bu) \hspace{0,3cm} \mathrm{weakly}\hspace{0.1cm}\mathrm{in}\hspace{0.1cm} L^{1}((0,T)\times\mathsf{B}),\\
&p_{w_{\Delta t}}(\varrho_{\Delta t}, \vartheta_{\Delta t}, b_{\Delta t})\rightharpoonup p_{w}(\varrho, \vartheta, b)\hspace{0,3cm} \mathrm{weakly}\hspace{0.1cm}\mathrm{in}\hspace{0.1cm} L^{1}((0,T)\times\mathsf{B}),\notag\\
&\varrho_{\Delta t}s_{w_{\Delta t}}(\varrho_{\Delta t}, \vartheta_{\Delta t})\to \varrho s_{w}(\varrho,\vartheta)\hspace{0,3cm} \mathrm{a.e.}\hspace{0.1cm}\mathrm{in}\hspace{0.1cm} (0,T)\times\mathsf{B},\notag\\
&\varrho_{\Delta t}s_{w_{\Delta t}}(\varrho_{\Delta t}, \vartheta_{\Delta t})\bu_{\Delta t}\rightharpoonup \varrho s_{w}(\varrho,\vartheta)\bu\hspace{0,3cm} \mathrm{weakly}\hspace{0.1cm}\mathrm{in}\hspace{0.1cm} L^{1}((0,T)\times\mathsf{B}).\notag
\end{align*}

\subsection{Construction of test functions}

We know that the test functions in $\eqref{mom-equ-B}$ and $\eqref{entropy-ine-B}$ depend on the solution $w$. Test functions satisfy 
$\bmphi_{\Delta t}|_{\Gamma_{w_{\Delta t}}(t)}=\psi_{\Delta t}\bfn$ in the $\Delta t$--layer. When $\Delta t \rightarrow 0$, we need to ensure that this relationship is satisfied on the boundary $\Gamma_w(t)$. Indeed,  
one can solve this problem by directly adopting the method from  \cite[Section 6.1.5]{KMN-24}, which is omitted here for brevity.
Roughly speaking, for a fixed $\bmphi\in C^\infty([0,T]\times \mathbb{R}^2)$, we define $\psi_{\Delta t}=\bmphi|_{\Gamma_{w_{\Delta t}}(t)}\cdot \bfn$. Moreover, we know that $\psi_{\Delta t }$ inherits the regularity from $w_{\Delta t}$, and the limit $\psi=\bmphi|_{\Gamma_{w}(t)} \cdot \bfn$.

The above analysis enables us to pass to the limit $\Delta t\to 0$. Therefore, we obtain the weak solution in the fixed domain $\mathsf{B}$. 

\begin{proposition}\label{fixed-domain}
Assume that the extension of the initial data and transport coefficients listed in the Section \ref{Section-extension-parameters} hold. Moreover, the parameter in the artificial pressure $\beta\geq \max\{4, \gamma\}$.  Then there exist $T\in(0,\infty]$ and a weak solution to the extended problem in the sense of Definition $\ref{weak-solutions-on-fixed-domain}$ in the time interval $(0,T)$. The time $T$ is finite only for 
\begin{align*}
\mathrm{either}\hspace{0.3cm}\lim_{s\to T}w(t,y)\searrow \alpha_{\partial\Omega} \hspace{0.2cm} \mathrm{or}\hspace{0.2cm} \lim_{s\to T}w(t,y)\nearrow \beta_{\partial\Omega}, \hspace{0.3cm} \mathrm{for} \hspace{0.1cm}\mathrm{some} \hspace{0.1cm}y\in\Gamma. 
\end{align*}
\end{proposition}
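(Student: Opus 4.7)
The plan is to assemble the limit passage $\Delta t\to 0$ that has been developed throughout Section \ref{Sec-limit-Delta-t}, and then to determine the maximal time of existence. The starting point is the family of approximate solutions $(\varrho_{\Delta t}, b_{\Delta t}, \bu_{\Delta t}, \vartheta_{\Delta t}, w_{\Delta t}, \theta_{\Delta t})$ built iteratively by solving the decoupled sub-problems on the partition $\{n\Delta t\}$ and satisfying the weak formulations \eqref{weak-continuity-app}--\eqref{sub-entropy-equ-app}, the energy inequality \eqref{Delta-t-total-energy}, the uniform bounds \eqref{Delta-t-uni-bound}--\eqref{Delta-t-entropy}, the pressure gain \eqref{pressure-int} and the H\"older embedding \eqref{embedding-Delta-t}. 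From these bounds I extract a (non-relabelled) subsequence enjoying all the weak and weak-$*$ limits listed in Section \ref{limit-Delta-t}, the $C^{1/5}$-compactness of $w_{\Delta t}$, the convergences \eqref{Omega}--\eqref{parameter-Delta-t} of the geometric cut-off factors, and the a.e.\ convergences \eqref{strong-convergence-vartheta-t} and \eqref{varhho-b-t} obtained in Sections \ref{Section-strong-conv-temp-Delta-t} and \ref{Section-strong-conv-dens-mag-Delta-t}.

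With these convergences in hand, the passage to the limit in \eqref{weak-continuity-app} and \eqref{weak-magnetic-induction-app} is straightforward and yields \eqref{weak-con-B}, \eqref{mag-equ-B}; the renormalized forms are recovered by the DiPerna--Lions theory since $\bu\in L^2(0,T;W^{1,2}(\mathsf{B}))$ by \eqref{Delta-t-bu}. In the coupled momentum equation \eqref{sub-fluid-momentum-equ-app} the convective term is treated by \eqref{Delta-t-varrho-bu-2} together with the Aubin--Lions-type compactness of $\varrho_{\Delta t}\bu_{\Delta t}$ in $C_{\mathrm{weak}}(0,T;H^{-1}(\mathsf{B}))$; the magnetic and physical pressures are handled by the identity \eqref{weak-press} and the uniform integrability \eqref{pressure-int}; the viscous stress converges thanks to the a.e.\ convergence of $\vartheta_{\Delta t}$, the continuity of $\mu,\eta$ and the strong convergence of $g^\omega_{w_{\Delta t}}$. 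The crucial penalization term in \eqref{sub-fluid-momentum-equ-app} is transferred onto the structure side through the discrete integration-by-parts identity \eqref{Delta-t-v}, which recovers the missing $\int_0^T\!\int_\Gamma w_t\psi_t+\int_\Gamma v_0\psi(0,\cdot)$ contribution in \eqref{mom-equ-B}. Combined with the bound $\frac{\delta}{\Delta t}\|\bfv_{\Delta t}-(w_{\Delta t})_t\bfn\|^2_{L^2(\Gamma_T)}\leq C$ from \eqref{Delta-t-uni-bound}$_1$ and the convergence \eqref{Delta-w-theta}, this simultaneously yields the kinematic coupling $\bu|_{\Gamma_w(t)}=w_t\bfn$ on $\Gamma_T^w$ in the sense of Definition \ref{weak-solutions-on-fixed-domain}.

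For the coupled entropy balance \eqref{sub-entropy-equ-app} I use the a.e.\ convergences \eqref{strong-convergence-vartheta-t}, \eqref{varhho-b-t} together with the uniform $L^1$-bound on the entropy production to pass to the limit, while the entropy production measure $\mathcal{D}_w^{\omega,\zeta}$ is defined as a weak-$*$ limit of $\mathcal{D}_{w_{\Delta t}}^{\omega,\zeta}$ in $\mathcal{M}([0,T]\times\overline{\mathsf{B}})$; the lower-bound constraint \eqref{measure-D} then follows from Fatou together with the convexity of $\bbS:\nabla\bu$ in $\nabla\bu$. The energy inequality \eqref{energy-ine-B} is obtained by taking $\liminf_{\Delta t\to 0}$ in \eqref{Delta-t-total-energy}: the left-hand side is handled by weak lower semicontinuity (the penalization contributions are nonnegative and are dropped), while the right-hand side converges by construction of the initial data \eqref{ini-app}--\eqref{ene-entropy-app}. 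The construction of admissible test pairs $(\bmphi,\psi)$ and $(\vphi,\tilde\vphi)$ satisfying the interface condition $\bmphi|_{\Gamma_w(t)}=\psi\bfn$ follows the prescription indicated at the end of Section \ref{Sec-limit-Delta-t}: for a fixed smooth $\bmphi$ one sets $\psi_{\Delta t}:=\bmphi|_{\Gamma_{w_{\Delta t}}(t)}\cdot\bfn$, which inherits the $C^{1/5}$-regularity of $w_{\Delta t}$ and converges uniformly to $\psi=\bmphi|_{\Gamma_w(t)}\cdot\bfn$.

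Finally, the maximal time of existence is determined by passing the estimate \eqref{T-max} through the uniform H\"older bound \eqref{embedding-Delta-t}, which gives $\alpha_{\partial\Omega}<w(t,y)<\beta_{\partial\Omega}$ on an interval $[0,T_0]$ for some $T_0>0$ depending only on the energy data. A standard prolongation argument then extends the solution until either $T=+\infty$ or one of the two degeneracies $w(t,y)\searrow\alpha_{\partial\Omega}$ or $w(t,y)\nearrow\beta_{\partial\Omega}$ occurs. The principal difficulty in the whole scheme is the interaction between the moving interface $\Gamma_{w_{\Delta t}}(t)$, the penalization of the coupling, and the test functions: the pressure concentration near the moving boundary has to be excluded by Lemma \ref{exc-pre-1} before the Div-Curl argument of Section \ref{Section-strong-conv-temp-Delta-t} can be used, and the kinematic condition on $\Gamma_T^w$ must be recovered in the limit from a bound that is only of order $\sqrt{\Delta t}$ on $\bfv_{\Delta t}-(w_{\Delta t})_t\bfn$; all other steps are natural adaptations of the arguments for fixed and for time-dependent Lipschitz domains.
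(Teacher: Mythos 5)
Your proposal follows essentially the same route as the paper: Proposition \ref{fixed-domain} is proved there by assembling exactly the ingredients you list from Section \ref{Sec-limit-Delta-t} --- the uniform bounds, the discrete integration by parts \eqref{Delta-t-v} for the penalization term, Lemma \ref{exc-pre-1} together with the Div-Curl and effective-viscous-flux machinery for the strong convergences of $\vartheta_{\Delta t}$, $\varrho_{\Delta t}$, $b_{\Delta t}$, weak lower semicontinuity for \eqref{energy-ine-B}, and \eqref{T-max} for the maximal time. The only step you compress more than the paper does is the identification of the weak limit of the trace $\bu_{\Delta t}|_{\Gamma_{w_{\Delta t}}(t)}$ with $\bu|_{\Gamma_w(t)}$, which requires the composition-with-$\widetilde{\bmphi_{w_{\Delta t}}}$ argument that the paper itself defers to Section \ref{Sec-con-interface}; this is a presentational rather than a mathematical gap.
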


\section{Limits $\omega$, $\zeta$, $\lambda$, $\xi$ $\to$ 0}\label{Section-Limits}

\subsection{Limits of the integrals outside fluid domain}

Before considering the limit process, we list the uniform bounds obtained in Proposition $\ref{fixed-domain}$. 

\begin{lemma}
  The uniform bounds for the solutions $(w, \theta, \varrho, b, \bu, \vartheta)$ hold for any $t\in (0,T)$, as derived in Proposition $\ref{fixed-domain}$.  Indeed, we have 
  \begin{equation}\label{Delta-t-uni-bound-2}
  \begin{aligned}
  &\esssup_{0\leq t\leq T}(\|w_t\|^2_{L^2(\Gamma)}(t)+\|\Delta  w\|^2_{L^2(\Gamma)}(t) + \alpha_2\|\nabla w_t\|^2_{L^2(\Gamma)}(t)+\|\theta\|^2_{L^2(\Gamma)}(t))\leq C,  \\
  &\int_0^T(\alpha_1\|\nabla w_t\|^2_{L^2(\Gamma)}+\|\nabla\theta\|^2_{L^2(\Gamma)})\leq C,  \\
  &\esssup_{0\leq t\leq T}\int_{\mathsf{B}}[\varrho|\bu|^2+|b|^2+{\delta(\varrho+b)^\beta}]\leq C,\\
  &{\esssup_{0\leq t\leq T}\int_{\mathsf{B}} (a^\lambda_w\vartheta^4+\varrho^{\gamma} + \varrho \vartheta)}\leq C\esssup_{0\leq t\leq T}\int_{\mathsf{B}} \varrho e_w(\varrho,\vartheta)\leq C,\\
&\int_0^T\int_{\mathsf{B}}\left(\xi\vartheta^5+\kappa^\zeta_w(|\nabla\log\vartheta|^2+|\nabla\vartheta^{\frac{3}{2}}|^2)\right)\leq C,  \\  
  &  \int_0^T\int_{\mathsf{B}}\frac{1}{\vartheta}\left(\bbS^{\omega}_{w}(\vartheta, \nabla\bu):\nabla\bu+\frac{\kappa_{w}^{\zeta}(\vartheta)|\nabla\vartheta|^2}{\vartheta}\right)\leq C.
  \end{aligned}
  \end{equation} 
  \end{lemma}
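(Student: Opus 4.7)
My plan is to derive \eqref{Delta-t-uni-bound-2} directly from the $\Delta t$-uniform estimates \eqref{Delta-t-uni-bound} by passing to the limit $\Delta t\to 0$ using the convergences collected in Section~\ref{Sec-limit-Delta-t}, exploiting weak (or weak-$*$) lower semicontinuity and Fatou's lemma. Since the bounds in \eqref{Delta-t-uni-bound} are independent of $\Delta t$, each bound in \eqref{Delta-t-uni-bound-2} is obtained by identifying the corresponding quantity in the limit and invoking lower semicontinuity of the relevant norm or integral functional.

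For the two structure lines, I rely on $\eqref{Delta-w-theta}$ together with the parallel weak-$*$ convergences of $w_{\Delta t}$ in $L^\infty_t H^2_y$, of $\sqrt{\alpha_2}(w_{\Delta t})_t$ in $L^\infty_t H^1_y$, and the weak $L^2_t L^2_y$ convergences of $\sqrt{\alpha_1}\nabla(w_{\Delta t})_t$ and $\nabla \theta_{\Delta t}$, all of which follow from $\eqref{Delta-t-uni-bound}_{1,2,3}$. Weak lower semicontinuity of the respective Bochner norms then preserves the bounds. For the fluid lines three and four, I combine the a.e.\ convergences $\vartheta_{\Delta t}\to\vartheta$ (from $\eqref{strong-convergence-vartheta-t}$) and $\varrho_{\Delta t}\to\varrho$, $b_{\Delta t}\to b$ (from $\eqref{varhho-b-t}$) with Fatou's lemma to handle $\varrho^\gamma$, $\varrho\vartheta$, $a_w^\lambda \vartheta^4$, $b^2$ and $\delta(\varrho+b)^\beta$. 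For the kinetic energy I write $\varrho|\bu|^2 = |\sqrt{\varrho}\bu|^2$, use that $\sqrt{\varrho_{\Delta t}}\bu_{\Delta t}$ is bounded in $L^\infty_t L^2_x$ by $\eqref{Delta-t-uni-bound}_4$, and exploit the a.e.\ convergence of $\sqrt{\varrho_{\Delta t}}$ together with the weak convergence of $\bu_{\Delta t}$ to identify the weak-$*$ limit and conclude by lower semicontinuity of $\|\cdot\|_{L^\infty_t L^2_x}$.

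For the dissipation bounds on the fifth and sixth lines, I proceed term by term. The bound on $\int \xi \vartheta^5$ follows from Fatou and $\vartheta_{\Delta t}\to\vartheta$ a.e. The bound on $\kappa_w^\zeta(\vartheta)\bigl(|\nabla\log\vartheta|^2+|\nabla\vartheta^{3/2}|^2\bigr)$ follows from the weak $L^2_{t,x}$ convergences of $\nabla\log\vartheta_{\Delta t}$ and $\nabla\vartheta_{\Delta t}^{3/2}$ provided by $\eqref{Delta-t-uni-bound}_6$, the a.e.\ convergence $\kappa_w^\zeta(\vartheta_{\Delta t})\to\kappa_w^\zeta(\vartheta)$ (continuity of $\kappa(\cdot)$ and convergence of $h_w^\zeta$ from \eqref{parameter-Delta-t}), and weak lower semicontinuity of the convex functional $v\mapsto\int \kappa_w^\zeta(\vartheta)|v|^2$ with frozen weight. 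For the last line I decompose the integrand as
\begin{equation*}
\frac{\mu_w^\omega(\vartheta)}{\vartheta}\bigl|\nabla\bu+\nabla^\top\bu-\rmdiv\bu\,\mathbb I_2\bigr|^2 + \frac{\eta_w^\omega(\vartheta)}{\vartheta}(\rmdiv\bu)^2 + \frac{\kappa_w^\zeta(\vartheta)}{\vartheta^2}|\nabla\vartheta|^2,
\end{equation*}
which is a sum of convex quadratics in $(\nabla\bu,\nabla\vartheta)$ weighted by nonnegative measurable functions of $\vartheta$ that converge a.e. along the sequence, and apply the classical lower semicontinuity theorem for normal convex integrands, as stated in \cite[Theorem 10.20]{FN-09}.

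The main obstacle is the semicontinuity of the viscous piece $\frac{1}{\vartheta}\bbS_w^\omega(\vartheta,\nabla\bu):\nabla\bu$, because the weight $1/\vartheta$ is unbounded on the set where $\vartheta\to 0$. This is resolved by the fact that $\vartheta>0$ a.e.\ on $(0,T)\times \mathsf{B}$ (guaranteed by the entropy inequality passed through Section~\ref{Section-strong-conv-temp-Delta-t}), so the weight is finite a.e.; after truncating $1/\vartheta$ at level $N$, applying the normal-integrand lower semicontinuity result with the bounded weight, and letting $N\to\infty$ by monotone convergence on the left-hand side, we recover the desired estimate. All remaining refinements are routine consequences of the convergence analysis in Sections~\ref{limit-Delta-t}--\ref{Section-strong-conv-dens-mag-Delta-t}.
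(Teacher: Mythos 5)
Your proposal is correct and matches the paper's (largely implicit) argument: the paper simply records these bounds as carrying over from \eqref{Delta-t-uni-bound} and the energy/entropy inequalities preserved in Proposition \ref{fixed-domain} under the limit $\Delta t\to 0$, which is exactly the weak lower semicontinuity and Fatou-type passage you spell out term by term. Your extra care with the unbounded weight $1/\vartheta$ via truncation and monotone convergence is a legitimate filling-in of a step the paper leaves unstated.
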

  
Note that from $\eqref{Delta-t-uni-bound-2}_6$, one can obtain (using Korn-Poincar\'{e} inequality) 
\begin{align}\label{bound-u-omega}
\|\bu\|^2_{L^2(0,T; W^{1,2}(\mathsf{B}))} \leq \frac{C}{\omega} . 
\end{align}

\vspace*{.2cm}

Also, it follows from $\eqref{Delta-t-uni-bound-2}_6$ that 
\begin{align*}
&\iint_{Q_T^w}\frac{1}{\vartheta}\left(\frac{1}{2}\mu(\vartheta)|\nabla\bu+\nabla^\top \bu -\mathrm{div}\bu\mathbb{I}_2|^2+\eta(\vartheta)|\mathrm{div}\bu|^2\right)\\
    & \leq C  \iint_{Q_T^w}\frac{1}{\vartheta}\bbS(\vartheta,\nabla \bu):\nabla\bu \leq  \int_0^T\int_{\mathsf{B}}\frac{1}{\vartheta}\bbS_w^{\omega}(\vartheta,\nabla \bu):\nabla\bu\leq C. 
\end{align*}
This together with $\eqref{vis-coe}$ yields
\begin{align}\label{gradient-u}
\iint_{Q_T^w}\left(|\nabla\bu+\nabla^\top\bu - \rmdiv \bu \mathbb{I}_2 |^2+|\mathrm{div}\bu|^2\right)\leq C.
\end{align}
Then,
by virtue of the generalized Korn-Poincar\'{e} inequality in Lemma \ref{Korn-inequality}, $\eqref{Delta-t-uni-bound-2}_3$, and $\eqref{gradient-u}$, we derive that 
\begin{align*}
    \|\bu\|^2_{L^2(0,T; W^{1,2}(\Omega_w(t))}\leq C.
\end{align*}
Subsequently, it has
\begin{align*}
&\|\bu\|_{L^2(0,T;L^p(\Omega_w(t)))}\leq C,\hspace{0.3cm}\mathrm{for \hspace{0.2cm}all\hspace{0.2cm}} p\geq 2,\\
&\|\varrho\bu\otimes\bu\|_{L^2(0,T;L^q(\Omega_w(t)))}\leq C, \text{ for } q = \frac{2p\gamma}{(p+2)\gamma + p} .
\end{align*}

Furthermore, it holds that
\begin{align*}
\|\varrho s(\varrho,\vartheta)\|_{L^q(Q_T^w)}+\|\varrho s(\varrho,\vartheta)\bu\|_{L^s(Q_T^w)}+\left\|\frac{\kappa{(\vartheta)}\nabla\vartheta}{\vartheta}\right\|_{L^r(Q_T^w)}\leq C, 
\end{align*}
for some $q, s, r>1$.

Finally, by applying the technique based on ``Bogovski\u{\i}  operator'', one can obtain the improved pressure estimate similar to \eqref{pressure-int}.

\vspace*{.2cm}

Based on the above bounds, we examine the limit as parameters $\xi$, $\omega$, $\zeta$, $\lambda$ $\to$ 0. Our objective is to eliminate all terms originating outside the fluid domain $Q_T^w$. Specifically, we allow these parameters to approach 0 simultaneously, albeit on different scales. Explicitly, one may consider  that 
\begin{align}\label{parameter-ass}
  \xi = \omega^{\frac{1}{2}} = \zeta^{\frac{1}{2}} = \lambda^{\frac{1}{6}} ,
\end{align}
and we prove  the following result.
\begin{lemma}\label{Lemma-vanishing-lower-terms}
Assume that parameters satisfy $\eqref{parameter-ass}$, then as  $\xi\to 0$, we have 
\begin{align*}
&\int_0^T\int_{\mathsf{B}\setminus \Omega_w(t)} a\lambda\vartheta^4\psi_t\to 0,\\
&\int_0^T \int_{\mathsf{B}\setminus \Omega_w(t)}a\lambda\vartheta^3\bu\cdot \nabla\psi\to 0,\\
& \int_0^T \int_{\mathsf{B}\setminus \Omega_w(t)}\bbS^{\omega}_w(\vartheta,\nabla\bu):\nabla\bmphi \to 0, \\
&\int_0^T \int_{\mathsf{B}\setminus \Omega_w(t)}\frac{\kappa^{\zeta}_w(\vartheta)\nabla\vartheta}{\vartheta}\cdot\nabla\psi\to 0, \\
& \int_0^T \int_{\sfB \setminus \Omega_w(t)} \xi \vartheta^4 \psi \to 0 . 
\end{align*}
\end{lemma}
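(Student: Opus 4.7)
The plan is to estimate each of the five integrals by H\"older's inequality, combining the uniform bounds from $\eqref{Delta-t-uni-bound-2}$ with the smallness of $g^\omega_w$, $h^\zeta_w$, $f^\lambda_w$ on $(0,T)\times(\sfB\setminus\Omega_w(t))$ given by $\eqref{g-app}$, $\eqref{h-app}$ and $\eqref{a-app-f}$. After invoking the scaling $\eqref{parameter-ass}$, each final bound will be a positive power of $\xi$, and hence vanishes as $\xi\to 0$.

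The first, second and fifth integrals are the most direct. Since $\vartheta\in L^\infty(0,T;L^4(\sfB))$ uniformly by $\eqref{Delta-t-uni-bound-2}_4$, one has $\int_0^T\!\!\int\vartheta^4\leq C$, so the first integral is bounded by $Ca\lambda\|\psi_t\|_\infty=O(\xi^6)$ and the fifth by $O(\xi)$. For the second, H\"older in space combined with the $2$D Sobolev embedding $W^{1,2}(\sfB)\hookrightarrow L^4(\sfB)$ and $\eqref{bound-u-omega}$ give $\int\vartheta^3|\bu|\,\rmd x\leq\|\vartheta\|_{L^4}^3\|\bu\|_{L^4(\sfB)}$ with $\|\bu\|_{L^2(0,T;L^4(\sfB))}\leq C/\sqrt{\omega}$; integrating in time and multiplying by $\lambda$ yields the bound $C\lambda/\sqrt{\omega}=C\xi^5$.

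For the fourth integral, use that $\kappa^\zeta_w=\zeta\kappa(\vartheta)$ on $(0,T)\times(\sfB\setminus Q_T^w)$ and Cauchy--Schwarz:
\begin{equation*}
\int_0^T\!\int_{\sfB\setminus\Omega_w(t)}\frac{\kappa^\zeta_w|\nabla\vartheta|}{\vartheta}\,\rmd x\,\rmd t\leq \left(\int_0^T\!\int\kappa^\zeta_w\right)^{1/2}\!\left(\int_0^T\!\int\frac{\kappa^\zeta_w|\nabla\vartheta|^2}{\vartheta^2}\right)^{1/2}.
\end{equation*}
The right-hand factor is uniformly bounded by $\eqref{Delta-t-uni-bound-2}_6$, while $\kappa(\vartheta)\leq\overline{\kappa}(1+\vartheta^3)$ together with $\vartheta\in L^\infty(L^4)$ give $\int\zeta\kappa(\vartheta)\leq C\zeta$, producing the bound $C\sqrt{\zeta}=C\xi$.

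The most delicate case will be the third integral. Writing $\bbS^\omega_w=g^\omega_w[\mu(\vartheta)(\nabla\bu+\nabla^\top\bu-\rmdiv\bu\,\mathbb{I}_2)+\eta(\vartheta)\rmdiv\bu\,\mathbb{I}_2]$ and using $\mu(\vartheta),\eta(\vartheta)\leq C(1+\vartheta)$, the task reduces to bounding $\int g^\omega_w(1+\vartheta)|\nabla\bu|$. I would split $g^\omega_w=\sqrt{g^\omega_w}\cdot\sqrt{g^\omega_w}$ and apply Cauchy--Schwarz to obtain $\|\sqrt{g^\omega_w}(1+\vartheta)\|_{L^2_{t,x}}\,\|\sqrt{g^\omega_w}\nabla\bu\|_{L^2_{t,x}}$. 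The second factor is uniformly controlled by combining $\eqref{Delta-t-uni-bound-2}_6$ (which bounds $\int g^\omega_w\mu(\vartheta)|D(\bu)|^2/\vartheta$ after extracting the trace-free part and using $\mu/\vartheta,\eta/\vartheta\geq\underline{\mu},\underline{\eta}>0$) with the weighted Korn-type argument already implicit in deriving $\eqref{bound-u-omega}$. For the first factor, H\"older gives $\|\sqrt{g^\omega_w}(1+\vartheta)\|_{L^2}^2=\int g^\omega_w(1+\vartheta)^2\leq \|g^\omega_w\|_{L^{5/3}}\,\|1+\vartheta\|_{L^5}^2\leq C\omega\,\xi^{-2/5}$, so $\|\sqrt{g^\omega_w}(1+\vartheta)\|_{L^2}\leq C\sqrt{\omega}\,\xi^{-1/5}=C\xi^{4/5}$. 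The total bound is $O(\xi^{4/5})$, which vanishes. The main obstacle will be verifying the uniform bound $\|\sqrt{g^\omega_w}\nabla\bu\|_{L^2_{t,x}}\leq C$ in a way that correctly balances the blow-up $\|\nabla\bu\|_{L^2}\leq C/\sqrt{\omega}$ against the smallness of $g^\omega_w$; the other integrals are routine once the scaling $\xi=\omega^{1/2}=\zeta^{1/2}=\lambda^{1/6}$ is invoked.
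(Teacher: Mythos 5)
There is a genuine gap that runs through four of your five estimates: you repeatedly invoke a uniform bound $\vartheta\in L^\infty(0,T;L^4(\mathsf{B}))$ citing $\eqref{Delta-t-uni-bound-2}_4$, but that estimate only controls $a^\lambda_w\vartheta^4$, and on the region of integration $\mathsf{B}\setminus\Omega_w(t)$ one has $a^\lambda_w=a\lambda$. Hence the available uniform bound is $\int_{\mathsf{B}\setminus\Omega_w(t)}\vartheta^4\leq C\lambda^{-1}$, which degenerates as $\lambda\to 0$. With this corrected, your first integral is only bounded by $a\lambda\cdot C\lambda^{-1}=Ca$ (no smallness), your fifth is bounded by $C\xi\lambda^{-1}=C\xi^{-5}$ (blows up), and your fourth fails similarly since $\int_{\mathsf{B}\setminus\Omega_w(t)}\zeta\kappa(\vartheta)$ is only $O(\zeta\lambda^{-3/4})=O(\xi^{-5/2})$. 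Your second integral happens to survive the correction, but with the weaker rate $Ca\lambda^{1/4}/\sqrt{\omega}=Ca\xi^{1/2}$ rather than your claimed $C\xi^5$. The missing ingredient — and the reason the $\xi$-penalization is in the scheme at all — is the dissipation bound $\int_0^T\int_{\mathsf{B}}\xi\vartheta^5\leq C$ from $\eqref{Delta-t-uni-bound-2}_5$, which yields $\|\vartheta\|_{L^5((0,T)\times\mathsf{B})}\leq C\xi^{-1/5}$ uniformly on the \emph{whole} of $\mathsf{B}$; this mild degeneracy is then absorbed by the scaling $\eqref{parameter-ass}$ (e.g.\ $a\lambda\xi^{-4/5}=Ca\xi^{26/5}$ for the first integral, $\xi^{1/5}\|\xi\vartheta^5\|_{L^1}^{4/5}$ for the fifth). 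Every temperature factor appearing in these integrals must be routed through the $\xi\vartheta^5$ bound, not through the energy bound on $a^\lambda_w\vartheta^4$.

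For the third integral your decomposition is essentially the right one and close to the paper's, but the claimed uniform bound $\|\sqrt{g^\omega_w}\,\nabla\bu\|_{L^2_{t,x}}\leq C$ is not justified: the entropy production controls $\int g^\omega_w\,\mu(\vartheta)\vartheta^{-1}|\nabla\bu+\nabla^\top\bu-\rmdiv\bu\,\mathbb{I}_2|^2$, i.e.\ only the weighted symmetric trace-free gradient (plus the $\eta$-weighted divergence), and there is no weighted Korn inequality available to upgrade this to the full gradient. The repair is simply to avoid Korn altogether: since $\bbS^\omega_w$ is built precisely from $\nabla\bu+\nabla^\top\bu-\rmdiv\bu\,\mathbb{I}_2$ and $\rmdiv\bu$, estimate $|\bbS^\omega_w|\leq C\,\frac{1}{\sqrt{\vartheta}}\sqrt{\bbS^\omega_w:\nabla\bu}\,\sqrt{g^\omega_w(1+\vartheta)}$ and pair $\left(\int\vartheta^{-1}\bbS^\omega_w:\nabla\bu\right)^{1/2}$ with $\|\sqrt{g^\omega_w}(1+\vartheta)\|_{L^2}$; combined with $\|g^\omega_w\|_{L^{5/3}}\leq C\omega$ and again the $\xi\vartheta^5$ bound, this gives the rate $C\sqrt{\omega}\,\xi^{-1/5}=C\xi^{4/5}$, matching your intended conclusion.
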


\begin{proof}
According to the uniform bound $\int_0^T\int_{\mathsf{B}}\xi \vartheta^5\leq C$ (see $\eqref{Delta-t-uni-bound-2}_5$), we derive that 
  \begin{align*}
  \int_0^T \int_{\mathsf{B}\setminus \Omega_w(t)}a\lambda\vartheta^4\psi_t&\leq a\lambda\|\vartheta\|^4_{L^5((0,T)\times\mathsf{B})}\|\psi_t\|_{L^{5}((0,T)\times\mathsf{B})} \\
&\leq C a\lambda\xi^{-\frac{4}{5}}
= C a \xi^6 \xi^{-\frac{4}{5}} 
\to 0,\hspace{1cm} \mathrm{as} \hspace{0.2cm} \xi \to 0 .
\end{align*}

Next, by using $\eqref{Delta-t-uni-bound-2}_4$ and $\eqref{bound-u-omega}$, we have  %
\begin{align*}
\int_{0}^T\int_{\mathsf{B}\setminus \Omega_w(t)}a\lambda\vartheta^3\bu\cdot\nabla\psi
&\leq a \lambda \|\nabla \psi\|_{L^\infty((0,T)\times \mathsf{B})} \int_0^T \bigg[\left(\int_{\mathsf{B}\setminus \Omega_w(t)} \vartheta^4\right)^{\frac{3}{4}} \|\bu\|_{L^4(\mathsf{B}\setminus \Omega_w(t))} \bigg]\\
& \leq C a \lambda \|\vartheta^4\|^{\frac{3}{4}}_{L^\infty(0,T; L^1(\mathsf{B}))} \|\bu \|_{L^2(0,T; L^4(\mathsf{B}\setminus \Omega_w(t))}\\
&\leq  \frac{C a \lambda}{\lambda^{\frac{3}{4}} \sqrt{\omega}  } = \frac{Ca\lambda^{\frac{1}{4}}}{\sqrt{\omega}}=C a\xi^{\frac{1}{2}} \to 0,\hspace{1cm} \mathrm{as} \hspace{0.2cm} \xi \to 0.
\end{align*}

After, we find that
\begin{align*} 
&\left|\int_0^T \int_{\mathsf{B} \setminus \Omega_w(t)} \bbS^\omega_w : \nabla \bmphi  \right|     \\
\leq & \|\nabla\bmphi\|_{L^\infty((0,T)\times \sfB)} \left(\int_0^T \int_{\sfB\setminus \Omega_w(t)} \bigg(\frac{1}{\sqrt{\vartheta}} |\bbS^\omega_w|\bigg)^{\frac{10}{9}}\right)^{\frac{9}{10}} \left( \int_0^T \int_{\sfB\setminus \Omega_w(t)} \vartheta^5 \right)^{\frac{1}{10}}     \\
 \leq &\frac{C}{\xi^{\frac{1}{10}}} \left( \int_0^T \int_{\sfB\setminus \Omega_w(t)}\xi \vartheta^5 \right)^{\frac{1}{10}} 
 \left(\int_0^T \int_{\sfB\setminus \Omega_w(t)} \bigg(\frac{1}{\sqrt{\vartheta}} \sqrt{|\bbS^\omega_w: \nabla \bu|}\,\sqrt{|g^\omega_w|(1+\vartheta)}\bigg)^{\frac{10}{9}}\right)^{\frac{9}{10}}  \\
  \leq &\frac{C}{\xi^{\frac{1}{10}}} \left( \int_0^T \int_{\sfB\setminus \Omega_w(t)}\xi \vartheta^5 \right)^{\frac{1}{10}} 
 \left(\int_0^T \int_{\sfB\setminus \Omega_w(t)} \frac{1}{\vartheta} |\bbS^\omega_w : \nabla \bu|  \right)^{\frac{1}{2}} \left( \int_0^T \int_{\sfB \setminus \Omega_w(t)} \big(g^\omega_w (1+\vartheta)\big)^{\frac{5}{4}} \right)^{\frac{2}{5}}   \\
 \leq &\frac{C}{\xi^{\frac{1}{10}}} \left( \int_0^T \int_{\sfB\setminus \Omega_w(t)}\xi \vartheta^5 \right)^{\frac{1}{10}}   \left(\int_0^T \int_{\sfB\setminus \Omega_w(t)} \frac{1}{\vartheta} |\bbS^\omega_w : \nabla \bu|  \right)^{\frac{1}{2}}
  \\
 & \qquad \times \|g^\omega_w\|^{\frac{1}{2}}_{L^{\frac{5}{3}}(((0,T)\times \sfB)\setminus Q^w_T)}\Bigg[ 1+ \frac{1}{\xi^{\frac{1}{10}}} \bigg(\int_0^T \int_{\sfB\setminus \Omega_w(t)} \xi \vartheta^5 \bigg)^{\frac{1}{10}}   \Bigg]  \\
 \leq & \frac{C \sqrt{\omega}}{\xi^{\frac{1}{5}}} 
 = \frac{C \xi}{\xi^{\frac{1}{5}}} \to 0,\hspace{1cm} as \hspace{0.2cm} \xi \to 0,   ,
 \end{align*} 
where we have used the fact \eqref{g-app} and  the uniform bounds 
\begin{align*}
\int_0^T \int_{\sfB} \xi \vartheta^5 \leq C, \ \ \int_0^T \int_{\sfB} \frac{1}{\vartheta} |\bbS^\omega_w: \nabla \bu| \leq C,
\end{align*}
from \eqref{Delta-t-uni-bound-2}.

Thereafter, we calculate the following: 
\begin{align*}
&\int_0^T\int_{\mathsf{B}\setminus \Omega_w(t)}\frac{\kappa^\zeta_w(\vartheta)\nabla\vartheta}{\vartheta}\cdot\nabla\psi\\
&\leq \bigg(\int_0^T\int_{\mathsf{B}\setminus \Omega_w(t)}\frac{\kappa^\zeta_w(\vartheta)|\nabla\vartheta|^2}{\vartheta^2}\bigg)^{\frac{1}{2}} \bigg(\int_0^T\int_{\mathsf{B}\setminus \Omega_w(t)}\kappa^\zeta_w(\vartheta)\bigg)^{\frac{1}{2}}\|\nabla\psi\|_{L^\infty((0,T)\times\mathsf{B})}\\
&\leq C\zeta^{\frac{1}{2}} \left( 1 + \bigg(\int_0^T \int_{\sfB \setminus \Omega_w(t)} \vartheta^3 \bigg)^{\frac{1}{2}} \right)  \\ 
& \leq C\zeta^{\frac{1}{2}}  \left( 1 + \xi^{-\frac{3}{10}}\|\xi \vartheta^5\|^{\frac{3}{10}}_{L^1((0,T)\times \sfB)}  \right) \\
&\leq C \zeta^{\frac{1}{2}}\xi^{-\frac{3}{10}}= C \xi^{1-\frac{3}{10}} \to 0,\hspace{1cm} as \hspace{0.2cm} \xi \to 0  , 
\end{align*}
where we have used \eqref{heat-coe} and 
\eqref{con-app}, and the bound $\eqref{Delta-t-uni-bound-2}_6$.

Finally, we compute that 
\begin{align*}
 \int_0^T \int_{\sfB \setminus \Omega_w(t)} \xi \vartheta^4 \psi \leq \xi^{\frac{1}{5}}\left(\int_0^T \int_{\sfB} \xi \vartheta^5 \right)^{\frac{4}{5}} \|\psi\|_{L^\infty((0,T)\times \sfB)} \leq C \xi^{\frac{1}{5}} \to 0,\hspace{.5cm} as \hspace{0.2cm} \xi \to 0  .
\end{align*}

Thus, the proof is complete. 
\end{proof}

The other terms in $((0,T)\times \sfB)\setminus Q^w_T$ in the weak formulations will vanish in a similar fashion as we have demonstrated in Lemma \ref{Lemma-vanishing-lower-terms}.  Also note that, we do not need to show that the term $\xi\int_0^T\int_{\mathsf{B}}\vartheta^5$ in \eqref{energy-ine} vanishes; instead, we simply utilize the fact that this term has  positive sign. In fact, this term provides control over the temperature in the solid domain $\mathsf{B}\setminus \Omega_w(t)$.

\subsection{Limit system}\label{Section-limit-system-other parameters}
Based on the above analysis, letting parameters $\omega$, $\zeta$, $\lambda$, $\xi$ $\to$ 0, we find that the limiting function $(w, \theta, \varrho, b, \bu, \vartheta)$ satisfies the following items:
\begin{itemize}
\item Function spaces:
\begin{equation}\label{fun-spa-1-delta}
	\left\{\begin{aligned}
&\varrho \geq 0,\hspace{0.2cm} \vartheta >0\hspace{0.3cm} \mathrm{a.e.\hspace{0.1cm} in}\hspace{0.2cm} Q_T^w,\\
&\varrho \in L^\infty(0,T; L^\gamma(\mathbb{R}^2))\cap L^q_{\mathrm{loc}}([0,T]\times \Omega_w(t)) \hspace{0.3cm}{\mathrm{with}} \hspace{0.2cm} \gamma, q>\frac{5}{3}, \hspace{0.2cm}\varrho\in L^\infty(0, T;L^\beta(\mathbb{R}^2)), \\
&b \in L^\infty(0,T; L^2(\mathbb{R}^2)), \hspace{0.1cm} b\in L^\infty(0,T;L^\beta(\mathbb{R}^2)) \\
&\bu \in L^2(0,T;W^{1,2}(\Omega_w(t))), \hspace{0.2cm} \varrho|\bu|^2\in L^\infty(0,T;L^1(\mathbb{R}^2)),\\
&\vartheta \in L^\infty (0,T;L^4(\Omega_w(t))), \hspace{0.3cm}\vartheta, \nabla\vartheta, \log\vartheta, \nabla \log\vartheta \in L^2(Q^w_T),\\
&\varrho s, \varrho s\bu, \frac{\bq}{\vartheta} \in  L^1(Q^w_T);
\end{aligned}
\right.
\end{equation}

\begin{equation}\label{fun-spa-2-delta}
	\left\{\begin{aligned}
& w \in L^\infty (0,T;H^2(\Gamma))\cap W^{1,\infty}(0,T; L^2(\Gamma)), \\
& \sqrt{\alpha_1}w\in H^1(0,T;H^1(\Gamma)),\hspace{0.3cm} \sqrt{\alpha_2} w \in W^{1,\infty}(0,T;H^1(\Gamma)), \\
& \theta >0\hspace{0.1cm} {\mathrm{a.e.}}\hspace{0.1cm}\mathrm{in} \hspace{0.1cm} \Gamma_T, \hspace{0.3cm} \theta \in L^\infty(0,T;L^2(\Gamma))\cap L^2(0,T;H^1(\Gamma)),\\
&\log \theta \in L^2(0,T;H^s(\Gamma)), \hspace{0.1cm} \mathrm{with} \hspace{0.1cm}s\leq \frac{1}{2}.
  \end{aligned}
\right.
\end{equation}

\begin{remark}
The fact $\log \theta \in L^2(0,T; H^s(\Gamma))$ with $0<s\leq \frac12$ can be deduced in the following manner.

Similar to \cite[Lemma 3.10]{MMNRT-22}, one can prove  that 
\begin{align*}
\log \vartheta |_{\Gamma_w(t)} = \log (\vartheta|_{\Gamma_w(t)}) .
\end{align*}
But we have $\log \vartheta \in L^2(0,T; H^1(\Omega_w(t)))$, and so 
$\log \theta = \log \vartheta |_{\Gamma_w(t)} \in L^2(0,T; H^s(\Gamma))$ for any $0<s\leq \frac 12$. In particular, one has $\theta >0$ a.e. in $\Gamma_T$. 

Here, we note that we could reach the regularity $H^{\frac{1}{2}}(\Gamma)$ for $\log \theta$ since the regularity of $w$ (the displacement of the shell) in \eqref{fun-spa-2} allows to ensure that the boundary $\Gamma_w(t)$ of the fluid domain $\Omega_w(t)$ is Lipschitz. 
\end{remark}

\item The continuity on the interface $\Gamma_T$: $w_t \bfn=\bu|_{\Gamma_{w}(t)}$ and $\theta=\vartheta|_{\Gamma_{w}(t)}$.

\item The continuity equation
\begin{align}\label{weak-con-delta}
\int_{Q_T^w}\left(\varrho\partial_t\phi+\varrho\bu\cdot\nabla\phi\right)=-\int_{\Omega_w(0)}\varrho_{0,\delta}\phi(0,\cdot)
\end{align}
holds for all $\phi\in C_c^1([0,T)\times\overline{\Omega_w(t)})$;

\item The no-resistive magnetic equation
\begin{align}\label{mag-equ-delta}
\int_{Q_T^w}\left(b\partial_t\phi+b\bu\cdot\nabla\phi\right)=-\int_{\Omega_w(0)}b_{0,\delta}\phi(0,\cdot)
 \end{align}
 holds for all $\phi\in C_c^1([0,T)\times\overline{\Omega_w(t)})$;

\item The coupled momentum equations
\begin{align}\label{mom-equ-delta}
 &\int_{Q_T^w}\varrho\bu \cdot\partial_t\bmphi+\int_{Q_T^w}\bigg[\varrho\bu\otimes\bu+\bigg(p(\varrho,\vartheta)+{\delta(\varrho+b)^\beta}+\frac{1}{2}b^2\bigg)\mathbb{I}-\bbS(\vartheta,\nabla\bu)\bigg]: \nabla\bmphi\notag \\
 &+\int_{\Gamma_T}(w_t\psi_t-\Delta w\Delta\psi-\alpha_1\nabla w_t\cdot\nabla\psi +\alpha_2\nabla w_t\cdot\nabla\psi_t+\nabla\theta\cdot\nabla\psi)\\
 &=-\int_{\Omega_w(0)}(\varrho\bu)_{0,\delta}\bmphi(0,\cdot)-\int_{\Gamma}v_0\psi(0,\cdot)-\alpha_2\int_{\Gamma}\nabla v_0\cdot\psi(0,\cdot), \notag 
 \end{align}
holds for all $\bmphi\in C^\infty_c([0,T)\times\overline{\Omega_w(t)}) $ and $\psi\in C_c^\infty([0,T)\times \Gamma)$ such that $\bmphi|_{\Gamma_w(t)}=\psi\bfn$ on $\Gamma_T$. 

\item The coupled entropy inequality
\begin{align}\label{entropy-ine-delta}
&\int_{Q_T^w}\left(\varrho s\vphi_t+\varrho s \bu\cdot\nabla\vphi-\frac{\kappa(\vartheta)\nabla\vartheta}{\vartheta}\cdot\nabla\vphi\right)+\langle\mathcal{D}_{\delta};\vphi\rangle_{[\mathcal{M};C]([0,T]\times\Omega_w(t))}\notag \\
&+\int_{\Gamma_T}(\theta\tilde\varphi_t-\nabla\theta\cdot\nabla\tilde\varphi+\nabla w\cdot\nabla\tilde\varphi_t)\\
&\leq -\int_{\Omega_w(0)}\varrho_{0,\delta} s(\varrho_{0,\delta},\vartheta_{0,\delta})\vphi(0,\cdot)-\int_{\Gamma}\theta_0\tilde\varphi(0,\cdot)-\int_{\Gamma}v_0\cdot\nabla\tilde\varphi(0,\cdot), \notag 
\end{align}
holds for all non-negative function $\vphi\in C^\infty_c([0,T)\times\overline{\Omega_w(t)}) $ and $\tilde\varphi \in C_c^\infty([0,T)\times \Gamma)$ such that $\vphi|_{\Gamma_w(t)}=\tilde\varphi$ on $\Gamma_T$, and 
\begin{align}\label{measure-D-delta}
    \mathcal{D}_{\delta} \geq \frac{1}{\vartheta}\left(\bbS(\vartheta,\nabla\bu):\nabla\bu+\frac{\kappa(\vartheta)|\nabla\vartheta|^2}{\vartheta}\right).
\end{align}

\item The coupled energy inequality
\begin{align}\label{energy-ine-delta}
    &\int_{\Omega_w(t)}\left[\frac{1}{2}\varrho|\bu|^2+\frac{1}{2}|b|^2+\varrho e(\varrho, \vartheta)+{\frac{\delta}{\beta-1}(\varrho+b)^{\beta}}\right](t)\notag \\
    &+\int_{\Gamma}\left(\frac{1-\delta}{2}|w_t|^2+\frac{1}{2}|\Delta w|^2+\frac{\alpha_2}{2}|\nabla w_t|^2+\frac{1-\delta}{2}|\theta|^2\right)(t)\notag \\
    &+\int_{[0,t]\times\Gamma}(\alpha_1|\nabla w_t|^2+|\nabla \theta|^2)\\
    &\leq \int_{\Omega_w(0)}\left[\frac{1}{2}\frac{|(\varrho\bu)_{0,\delta}|^2}{\varrho_{0,\delta}}+\frac{1}{2}|b_{0,\delta}|^2+\varrho_{0,\delta}e(\varrho_{0,\delta},\vartheta_{0,\delta})+{\frac{\delta}{\beta-1}(\varrho_{0,\delta}+b_{0,\delta})^{\beta}}\right]\notag \\
    &+\int_{\Gamma}\left(\frac{1}{2}|v_0|^2+\frac{1}{2}|\Delta w_0|^2+\frac{\alpha_2}{2}|\nabla v_0|^2+\frac{1}{2}|\theta_0|^2\right), \notag 
\end{align}
holds for all $t\in [0,T]$. 
\end{itemize}
  
Finally,  we have   the following uniform estimates on $Q_T^w$ (which is concluded from \eqref{energy-ine-delta} and 
\eqref{ini-app}--\eqref{mom-app}--\eqref{ene-entropy-app})
\begin{align}\label{uniform-bound-delta}
&\sup_{0\leq t\leq T}(\|w_t\|^2_{L^2(\Gamma)}(t)+\|\Delta  w\|^2_{L^2(\Gamma)}(t)+\alpha_2\|\nabla w_t\|^2_{L^2(\Gamma)}(t)+\|\theta\|^2_{L^2(\Gamma)}(t))\leq C,\notag \\
&\int_0^t(\alpha_1\|\nabla w_t\|^2_{L^2(\Gamma)}+\|\nabla\theta\|^2_{L^2(\Gamma)})\leq C,\notag \\
&\sup_{0\leq t\leq T}\int_{\Omega_w(t)}[\varrho|\bu|^2+|b|^2+{\delta(\varrho+b)^\beta}]\leq C,\notag \\
&{\sup_{0\leq t\leq T}\int_{\Omega_w(t)} (a\vartheta^4+\varrho^{\gamma}+\varrho \vartheta)\leq C\sup_{0\leq t\leq T}\int_{\Omega_w(t)} \varrho e(\varrho,\vartheta)\leq C,}\\
&\int_0^t\int_{\Omega_w(t)}\left(|\nabla\log\vartheta|^2+|\nabla\vartheta^{\frac{3}{2}}|^2\right)\leq C,\notag \\
&  \int_0^T\int_{\Omega_w(t)}\frac{1}{\vartheta}\left(\bbS(\vartheta, \nabla\bu):\nabla\bu+\frac{\kappa(\vartheta)|\nabla\vartheta|^2}{\vartheta}\right)\leq C, \notag 
\end{align}
where all constants $C$ are independent of parameter $\delta$. 

As consequence of the uniform estimates found in \eqref{uniform-bound-delta}, we can obtain (as obtained in previous sections)
\begin{align}
&\| \bu\|_{L^2(0,T; W^{1,2}(\Omega_w(t))) } \leq C  ,  \label{uni-bound-delta-u}\\
&\|\varrho \bu\|_{L^\infty(0,T; L^{\frac{2\gamma}{\gamma+1}}(\Omega_w(t))) } \leq C ,\label{uni-bound-delta-rho-u} \\
& \|\varrho \bu \otimes \bu \|_{L^2(0,T; L^q(\Omega_w(t))     )  } \leq C, \ \ \text{for } q=\frac{2p\gamma}{(p+2)\gamma + p}  \label{uni-bound-delta-rho-u-u}.
\end{align}

\section{Limit $\delta\to 0$}\label{Section-limit-delta} 
Using the uniform bounds established in $\eqref{uniform-bound-delta}$, we obtain that 
\begin{align}\label{delta-bound-uniform}
&w_{\delta} \rightharpoonup w \hspace{0.5cm}\mathrm{weakly *}\hspace{0.2cm}\mathrm{in}\hspace{0.2cm} L^\infty(0,T; H^2(\Gamma)),\notag\\
&(w_{\delta})_t\rightharpoonup w_t \hspace{0.5cm}\mathrm{weakly *}\hspace{0.2cm}\mathrm{in}\hspace{0.2cm} L^\infty(0,T; L^2(\Gamma)),\notag\\
&\theta_{\delta}\rightharpoonup \theta  \hspace{0.5cm}\mathrm{weakly *}\hspace{0.2cm}\mathrm{in}\hspace{0.2cm} L^\infty(0,T; L^2(\Gamma)),\notag\\
&\varrho_{\delta } \rightharpoonup  \varrho \hspace{0.5cm} \mathrm{weakly *} \hspace{0.2cm}\mathrm{in} \hspace{0.2cm} L^\infty(0,T;L^{\gamma}(\Omega_w(t))),\\
&b_{\delta} \rightharpoonup  b \hspace{0.5cm} \mathrm{weakly *} \hspace{0.2cm}\mathrm{in} \hspace{0.2cm} L^\infty(0,T;L^2(\Omega_w(t)),\notag\\
&\bu_{\delta} \rightharpoonup  \bu \hspace{0.5cm} \mathrm{weakly *} \hspace{0.2cm}\mathrm{in} \hspace{0.2cm} L^2(0,T;W^{1,2}(\Omega_w(t)),\notag\\
&\vartheta_{\delta} \rightharpoonup  \vartheta \hspace{0.5cm} \mathrm{weakly *} \hspace{0.2cm}\mathrm{in} \hspace{0.2cm} L^\infty(0,T;L^{4}(\Omega_w(t))),\notag\\
&\vartheta_{\delta} \rightharpoonup  \vartheta \hspace{0.5cm} \mathrm{weakly} \hspace{0.2cm}\mathrm{in} \hspace{0.2cm} L^2(0,T;H^1(\Omega_w(t))). \notag
\end{align}
Moreover, we mimic the estimate in $\eqref{embedding-Delta-t}$ to get 
\begin{align}\label{delta-w-bound}
w_{\delta}\rightarrow w \hspace{0.5cm}\mathrm{in} \hspace{0.2cm} C^{\frac{1}{5}}([0,T];C^{1,\frac{1}{10}}(\Gamma)).
\end{align}

\subsection{Continuity of velocity and temperature on the interface}\label{Sec-con-interface}


Based on the assumption on the flow function in $\eqref{flow-fun}$, it reveals that $\bu_{\delta}\circ \widetilde{\bmphi_{w_{\delta}}}$ is bounded in $L^2(0,T;W^{1,2}(\mathbb{R}^2))$. Then we have 
\begin{align}\label{weak-u-varphi-delta}
\bu_{\delta}\circ \widetilde{\bmphi_{w_{\delta}}} \rightharpoonup \mathbf{h} \hspace{0.5cm} \mathrm{weakly} \hspace{0.2cm}\mathrm{in} \hspace{0.2cm} L^2(0,T;W^{1,2}(\mathbb{R}^2)).
\end{align}
In light of $\eqref{delta-bound-uniform}_2$, we deduce that $w_t\bfn$ aligns with the trace of $\mathbf{h} $ on $\Gamma$.  Therefore, our remaining task is to demonstrate that $\mathbf{h}=\bu\circ \widetilde{\bmphi_{w}} $. 
For any test function $\varphi\in C_c^\infty((0,T)\times\mathbb{R}^2)$, it has 
\begin{align*}
&\int_{(0,T)\times\mathbb{R}^2}(\bu_{\delta}\circ \widetilde{\bmphi_{w_{\delta}}} -\bu\circ \widetilde{\bmphi_{w}})\cdot \varphi\\
&=\int_{(0,T)\times\mathbb{R}^2}(\bu_{\delta}\circ \widetilde{\bmphi_{w_{\delta}}} -\bu\circ \widetilde{\bmphi_{w_{\delta}}})\cdot \varphi+\int_{(0,T)\times\mathbb{R}^2}(\bu\circ \widetilde{\bmphi_{w_{\delta}}} -\bu \circ \widetilde{\bmphi_{w}})\cdot \varphi .
\end{align*}
Based on $\eqref{delta-w-bound}$, we establish that $(\widetilde{\bmphi_{w_{\delta}}})^{-1}$ converges to $(\widetilde{\bmphi_{w}})^{-1}$ locally uniformly in $[0,T]\times \mathbb{R}^2$, and $\widetilde{\bmphi_{w_{\delta}}}$ converges to $\widetilde{\bmphi_{w}}$ locally uniformly in $[0,T]\times\mathbb{R}^2$. Consequently, $\varphi\circ (\widetilde{\bmphi_{w_{\delta}}})^{-1}$ converges $\varphi\circ (\widetilde{\bmphi_{w}})^{-1}$ uniformly in $[0,T]\times\mathbb{R}^2$, and $\nabla (\widetilde{\bmphi_{w_{\delta}}})^{-1}$ converges $\nabla \widetilde{\bmphi_{w_{\delta}}}$ in $L^p((0,T)\times\mathbb{R}^2)$ for any $p\geq 1$. 
Based on this analysis, it gives 
\begin{align*}
&\lim_{\delta\rightarrow 0}\int_{(0,T)\times\mathbb{R}^2}(\bu_{\delta}\circ \widetilde{\bmphi_{w_{\delta}}} -\bu\circ \widetilde{\bmphi_{w_{\delta}}})\cdot \varphi\\
&=\lim_{\delta\rightarrow 0}\int_{(0,T)\times\mathbb{R}^2}(\bu_{\delta}-\bu)\cdot \varphi\circ (\widetilde{\bmphi_{w_{\delta}}})^{-1}|\mathrm{det}(\nabla \widetilde{\bmphi_{w_{\delta}}})^{-1}|=0.
\end{align*}

Moreover, for any $n\in (1,2)$, it has 
\begin{align}\label{II-u-varphi}
&\int_{(0,T)\times\mathbb{R}^2}(\bu\circ \widetilde{\bmphi_{w_{\delta}}} -\bu \circ \widetilde{\bmphi_{w}})\cdot \varphi\\
&\leq |\{\widetilde{\bmphi_{w_{\delta}}} \neq \widetilde{\bmphi_{w}}\}|^{\frac{1}{n'}}\left(\|\bu\circ \widetilde{\bmphi_{w_{\delta}}}\|_{L^n(0,T;L^n(\mathbb{R}^2))}+\|\bu \circ \widetilde{\bmphi_{w}}\|_{L^n(0,T;L^n(\mathbb{R}^2))}\right)\|\varphi\|_{L^\infty((0,T)\times\mathbb{R}^2)}\notag.
\end{align}
Now, observe that 
\begin{align*}
\|\bu \circ \widetilde{\bmphi_{w_{\delta}}}\|_{L^n(0,T;L^n(\mathbb{R}^2))}&=\bigg(\int_{(0,T)\times\mathbb{R}^2}|\bu|^n|\mathrm{det}\nabla(\widetilde{\bmphi_{w_{\delta}}})^{-1}|\bigg)^{\frac{1}{n}}\\
&\leq \|\bu\|_{L^2(0,T;L^2(\mathbb{R}^2))}\|\mathrm{det}\nabla(\widetilde{\bmphi_{w_{\delta}}})^{-1}\|^{\frac{1}{n}}_{L^{\frac{2}{2-n}}((0,T)\times\mathbb{R}^2)}\\
&\leq C .
\end{align*}
Based on the fact that $|\{\widetilde{\bmphi_{w_{\delta}}} \neq \widetilde{\bmphi_{w}}\}|\rightarrow$ as $\delta \rightarrow 0$, letting $\delta \rightarrow 0$ in $\eqref{II-u-varphi}$, we get 
\begin{align*}
\lim_{\delta \rightarrow 0}\int_{(0,T)\times\mathbb{R}^2}(\bu\circ \widetilde{\bmphi_{w_{\delta}}} -\bu \circ \widetilde{\bmphi_{w}})\cdot \varphi=0.
\end{align*}

This completes the proof of $\mathbf{h}=\bu\circ \widetilde{\bmphi_{w}}$ in $\eqref{weak-u-varphi-delta}$.

\vspace*{.5cm} 

 Next, we prove the continuity of temperature on the interface $\Gamma_w$. 
Utilizing the property of the flow function in $\eqref{flow-fun}$, we deduce that $\vartheta_{\delta}\circ \widetilde{\bmphi_{w_{\delta}}}$ is bounded in $L^2(0,T;W^{1,2}(\mathbb{R}^2))$. Therefore, one has 
\begin{align}\label{weak-vartheta-varphi-delta}
\vartheta_{\delta}\circ \widetilde{\bmphi_{w_{\delta}}} \rightharpoonup  {g} \hspace{0.5cm} \mathrm{weakly} \hspace{0.2cm}\mathrm{in} \hspace{0.2cm} L^2(0,T;W^{1,2}(\mathbb{R}^2)).
\end{align}

By virtue of $\eqref{delta-bound-uniform}_3$, we infer that $\theta$ coincides with the trace of ${g} $ on $\Gamma$. It requires us to show that $g=\vartheta\circ \widetilde{\bmphi_{w}}$. 
Indeed, 
for any test function $\varphi\in C_c^\infty((0,T)\times\mathbb{R}^2)$, we have
\begin{align*}
&\int_{(0,T)\times\mathbb{R}^2}(\vartheta_{\delta}\circ \widetilde{\bmphi_{w_{\delta}}} -\vartheta\circ \widetilde{\bmphi_{w}})\cdot \varphi\\
&=\int_{(0,T)\times\mathbb{R}^2}(\vartheta_{\delta}\circ \widetilde{\bmphi_{w_{\delta}}} -\vartheta\circ \widetilde{\bmphi_{w_{\delta}}})\cdot \varphi+\int_{(0,T)\times\mathbb{R}^2}(\vartheta\circ \widetilde{\bmphi_{w_{\delta}}} -\vartheta \circ \widetilde{\bmphi_{w}})\cdot \varphi .
\end{align*}
Then, 
inspired by the previous analysis, we can also prove that the above integration tends to $0$ as $\delta \rightarrow 0$. Thus  the proof is complete.

\subsection{Higher integrability of density and magnetic field}

 Following a similar argument as in \cite[Section 2.2.5]{FN-09} and \cite[Lemma 4.1]{FNP-01}, we obtain the higher integrability of $\varrho^\gamma+b^2$ uniformly in $\delta$.

\begin{lemma}\label{high-int-press}

Assume that $(\varrho_\delta, b_{\delta}, \bu_{\delta}, \vartheta_\delta, w_\delta)$ is the solution discussed in Section \ref{Section-limit-system-other parameters}. Then it holds that 
\begin{align}\label{high-int-varrho-b-delta}
\int_{0}^T\int_{\Omega_w(t)}(\varrho_\delta^{\gamma+\theta_1}+b_{\delta}^{2+\theta_2}+\delta\varrho_{\delta}^{\beta+\theta_1}+\delta b_{\delta}^{\beta+\theta_2})\leq C, 
\end{align}
for some positive constants $\theta_1$ and $\theta_2$ satisfying 
\begin{align*}
\theta_1 \leq \min\left\{\frac{\gamma-1}{2} - \frac{\gamma}{p}, \frac{\gamma}{4} \right\} \hspace{0.2cm} \text{and} \hspace{0.2cm} \theta_2 \leq 
\min\left\{ \frac{\gamma-1}{\gamma} - \frac{2}{p}, \frac{1}{2} \right\} ,     \hspace{0.2cm} \text{for} \ p>\frac{2\gamma}{\gamma-1} \text{ and }\gamma>1.
\end{align*}

\end{lemma}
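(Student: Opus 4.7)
The plan is to adapt the classical Bogovski\u{\i}-type argument (\cite[Section 2.2.5]{FN-09}, \cite{FNP-01}) to the present fluid-structure setting. The key idea is to test the limit momentum equation \eqref{mom-equ-delta} with functions that invert the divergence of $\varrho_\delta^{\theta_1}$ and $b_\delta^{\theta_2}$, respectively, so that the pressure-like terms on the left-hand side pick up an extra factor $\varrho_\delta^{\theta_1}$ or $b_\delta^{\theta_2}$. To avoid dealing with the moving interface $\Gamma_w(t)$ directly, I would first transform everything to the fixed reference domain $\Omega$ via $\widetilde{\bmphi_{w_\delta}}$, using \eqref{delta-w-bound} which guarantees that the Jacobians and their inverses are bounded uniformly in $\delta$.

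For the density bound, choose a cut-off $\psi \in C_c^\infty((0,T))$ and set
\begin{align*}
\bmphi_\delta(t,x) = \psi(t)\, \mathcal{B}_{\Omega_w(t)}\!\left[\varrho_\delta^{\theta_1} - \frac{1}{|\Omega_w(t)|}\int_{\Omega_w(t)}\varrho_\delta^{\theta_1}\right]\!,
\end{align*}
where $\mathcal{B}_{\Omega_w(t)}\colon L^q_0(\Omega_w(t))\to W^{1,q}_0(\Omega_w(t))$ is the Bogovski\u{\i} operator, bounded for every $1<q<\infty$ thanks to the uniform Lipschitz regularity of $\Omega_w(t)$. Plugging $\bmphi_\delta$ into \eqref{mom-equ-delta} yields on the left the quantity $\int_0^T\!\psi\!\int_{\Omega_w(t)}\bigl(p(\varrho_\delta,\vartheta_\delta)+\tfrac12 b_\delta^2+\delta(\varrho_\delta+b_\delta)^\beta\bigr)\varrho_\delta^{\theta_1}$, which in view of \eqref{state-equ} dominates $\int\!\int(\varrho_\delta^{\gamma+\theta_1}+\delta\varrho_\delta^{\beta+\theta_1})$ modulo a lower-order average term. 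The remaining contributions (the convective term $\varrho_\delta\bu_\delta\otimes\bu_\delta:\nabla\bmphi_\delta$, the viscous term, the elastic coupling, and the time derivative $\varrho_\delta\bu_\delta\cdot\partial_t\bmphi_\delta$) are bounded uniformly in $\delta$ by H\"older and the estimates \eqref{uniform-bound-delta}, \eqref{uni-bound-delta-u}--\eqref{uni-bound-delta-rho-u-u} together with the continuity equation \eqref{weak-con-delta} (in renormalized form) to handle $\partial_t\varrho_\delta^{\theta_1}$. The range $\theta_1\le \min\{\tfrac{\gamma-1}{2}-\tfrac{\gamma}{p},\tfrac{\gamma}{4}\}$ is exactly what is needed for the convective remainder $\varrho_\delta\bu_\delta\otimes\bu_\delta:\nabla\bmphi_\delta$ to lie in $L^1$ uniformly, via the bound \eqref{uni-bound-delta-rho-u-u} and Sobolev embedding on $\Omega_w(t)\subset\mathbb{R}^2$.

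For the magnetic part, note that \eqref{mag-equ-delta} has the exact structure of a continuity equation, so $b_\delta$ admits a renormalized formulation (Lemma \ref{Renormalized-equ}) and the same Bogovski\u{\i} test with $b_\delta^{\theta_2}$ in place of $\varrho_\delta^{\theta_1}$ is legitimate. The dominant term on the left becomes $\int\!\int b_\delta^{2+\theta_2}+\delta b_\delta^{\beta+\theta_2}$, while the constraint $\theta_2\le\min\{\tfrac{\gamma-1}{\gamma}-\tfrac{2}{p},\tfrac12\}$ ensures that $b_\delta\bu_\delta\in L^{q'}$ pairs correctly with $\nabla\bmphi_\delta\in L^q$. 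The main obstacle I anticipate is controlling the time derivative $\partial_t\bmphi_\delta$: because $\mathcal{B}_{\Omega_w(t)}$ depends on $t$ through the geometry, one has to commute $\partial_t$ with $\mathcal{B}$, which produces extra terms involving $w_t$. These are handled by pulling back to the fixed domain $\Omega$, using \eqref{flow-fun} and the uniform $W^{1,\infty}$-in-time regularity of $\widetilde{\bmphi_{w_\delta}}$ from \eqref{delta-bound-uniform}--\eqref{delta-w-bound}, after which the computation reduces to the classical one on a fixed Lipschitz domain and \eqref{high-int-varrho-b-delta} follows.
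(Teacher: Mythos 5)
Your proposal is correct and follows essentially the same route as the paper: test the coupled momentum equation \eqref{mom-equ-delta} with $\tilde\psi\,\mathcal B\bigl(\varrho_\delta^{\theta_1}-[\varrho_\delta^{\theta_1}]_{\Omega_w(t)}\bigr)$ (and its analogue for $b_\delta^{\theta_2}$), use the renormalized continuity/magnetic equations to handle $\partial_t\mathcal B[\cdot]$, and close with H\"older and the uniform bounds \eqref{uniform-bound-delta}--\eqref{uni-bound-delta-rho-u-u}. The only slight misattribution is that in the paper the constraint $\theta_1\le\frac{\gamma-1}{2}-\frac{\gamma}{p}$ arises from the time-derivative term and $\theta_1\le\frac{\gamma}{4}$ from the viscous term (the convective term needs only $\theta_1\le\gamma-1-\frac{2\gamma}{p}$, which is weaker), but this does not affect the validity of your argument.
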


\begin{proof}
We apply the technique based on the ``Bogovski\u{\i} operator'' to prove the required estimate. 
\begin{itemize} 
\item[--] To this end, we consider the auxiliary problem: let $D\subset \mathbb R^2$ be any smooth domain, then for given any 
\begin{align}\label{B-1} 
g \in  L^q(D) , \ \ \int_D g \, d x = 0 ,
\end{align}
find a vector field $\mathbf v= \mathcal B [g]$ such that 
\begin{align}\label{B-2}
	\mathbf v \in W^{1,q}_0(D; \mathbb R^2) , \ \  \ \rmdiv \, \mathbf v = g \ \ \text{in } D.
\end{align}

\item[--] The problem \eqref{B-1}--\eqref{B-2} admits many solutions, but we use the construction due to Bogovskii \cite{Bo-80}. In particular, we write the following results from \cite[Galdi, Chapter III.3]{Galdi}.

\item[--] The operator $\mathcal B$ is bounded, linear and enjoys the bound 
\begin{align*} 
\|\mathcal B [g] \|_{W^{1,q}_0(D; \R^2)} \leq c(q) \|g\|_{L^q(D)} \ \text{ for any } 1<q<\infty.
\end{align*} 
If, moreover, $g$ can be written in the form $g=\rmdiv \mathbf G$ for a certain $\mathbf G\in L^r(D; \mathbb R^2)$, $\mathbf G \cdot \mathbf n |_{\partial D}= 0$, then 
\begin{align*} 
\|\mathcal B [g] \|_{L^{r}(D; \mathbb R^2)} \leq c(r) \|\mathbf G\|_{L^{r}(D;\mathbb R^2)} \ \text{ for any } 1<r<\infty   .
\end{align*}
\end{itemize}

$\bullet$ Now, we choose the  test function $(\bmphi, \psi)=\bigg(\tilde \psi \mathcal{B}\left(\varrho^{\theta_1}_\delta-[\varrho^{\theta_1}_{\delta}]_{\Omega_w(t)}\right), 0\bigg)$  
  with \\ $[\varrho_{\delta}^{\theta_1}]_{\Omega_w(t)}=\frac{1}{|\Omega_w(t)|}\int_{\Omega_w(t)}\varrho^{\theta_1}_{\delta}$  for each $t\in [0,T]$, and 
 $\tilde \psi \in C_c^1(0,T)$ 
in $\eqref{mom-equ-delta}$. 
As a consequence, we have 
\begin{align*}
&\int_0^T \tilde \psi \int_{\Omega_w(t)}\bigg(p(\varrho_\delta, \vartheta_\delta)+\delta(\varrho_{\delta}+b_{\delta})^\beta+\frac{1}{2}b^2_\delta\bigg)\varrho_\delta^{\theta_1}\\
=&\frac{1}{|\Omega_w(t)|}\int_0^T\tilde \psi\int_{\Omega_w(t)}[\varrho^{\theta_1}_{\delta}]_{\Omega_w(t)}
\bigg(p(\varrho_\delta, \vartheta_\delta)+\delta(\varrho_{\delta}+b_{\delta})^\beta+\frac{1}{2}b^2_\delta\bigg)
\\
& -\int_0^T \tilde \psi \int_{\Omega_w(t)}\varrho_{\delta}\bu_{\delta}\cdot\partial_t\mathcal{B}\left(\varrho_\delta^{\theta_1}-[\varrho_{\delta}^{\theta_1}]_{\Omega_w(t)}\right) \\ 
& - \int_0^T \partial_t \tilde{\psi} \int_{\Omega_w(t)} \varrho_{\delta}\bu_{\delta}\cdot\mathcal{B}\left(\varrho_\delta^{\theta_1}-[\varrho_{\delta}^{\theta_1}]_{\Omega_w(t)})\right)\\
&  -\int_0^T \tilde{\psi} \int_{\Omega_w(t)}\varrho_{\delta}\bu_{\delta}\otimes\bu_{\delta}:\nabla\mathcal{B}\left(\varrho_\delta^{\theta_1} -[\varrho_{\delta}^{\theta_1}]_{\Omega_w(t)}\right)\\
&+\int_0^T \tilde \psi \int_{\Omega_w(t)}\bbS(\vartheta_{\delta},\nabla\bu_{\delta}):\nabla \mathcal{B}\left(\varrho_\delta^{\theta_1} -[\varrho_{\delta}^{\theta_1}]_{\Omega_w(t)}\right) \\
&:=\sum_{i=1}^5\mathcal{J}_{i}.
\end{align*}

(i) Provided that $\theta_1\leq \gamma$, we use H\"older's inequality, $\eqref{uniform-bound-delta}_3$, and $\eqref{uniform-bound-delta}_4$ to get 
\begin{align*}
|\mathcal{J}_1|&\leq C\|\tilde{\psi}\|_{L^\infty(0,T)}\|\varrho_\delta^{\theta_1}\|_{L^1(0,T; L^1(\Omega_w(t)))}\|p(\varrho_\delta,\vartheta_\delta)+\delta(\varrho_{\delta}+b_{\delta})^\beta+b^2_\delta\|_{L^\infty(0,T;L^1(\Omega_w(t)))}\\
&\leq C \|\tilde{\psi}\|_{L^\infty(0,T)}.
\end{align*}

\vspace*{.2cm}

(ii) Regarding the term $\mathcal{J}_2$, we  employ the renormalized continuity equation
\begin{align*}
a(\varrho_\delta)_t + \rmdiv (a(\varrho_\delta) \bu_\delta) +  \left(a^\prime(\varrho_\delta) \varrho_\delta - a(\varrho) \right) \rmdiv \bu_\delta = 0 , 
\end{align*}
with $a(\rho_\delta)=\rho^{\theta_1}_\delta$.
Consequently, we have 

\begin{align*}
|\mathcal J_2 | &= \left|    \int_0^T \tilde \psi \int_{\Omega_w(t)}  \varrho_\delta \bu_\delta  \cdot \partial_t \mathcal B \left(\varrho_\delta^{\theta_1} - [\varrho_\delta^{\theta_1}]_{\Omega_w(t)}  \right)     \right| \\
& \leq \|\tilde \psi\|_{L^\infty(0,T)} \int_{0}^T \|\varrho_\delta \bu_\delta\|_{L^{\frac{2\gamma}{\gamma+1}}(\Omega_w(t)} \left\|  \partial_t \mathcal B \left(\varrho_\delta^{\theta_1} - [\varrho_\delta^{\theta_1}]_{\Omega_w(t)}  \right) \right\|_{L^{\frac{2\gamma}{\gamma-1}}(\Omega_w(t))} .
\end{align*}
We have (using properties of Bogovski\u{\i}'s operator),
\begin{align*}
&\left\|\partial_t\mathcal{B}\left(\varrho_\delta^{\theta_1}-[\varrho_{\delta}^{\theta_1}]_{\Omega_w(t)}\right)\right\|_{L^{\frac{2\gamma}{\gamma-1}}(\Omega_w(t))} \\
& \leq \left\| \mathcal B \left( \rmdiv (\varrho^{\theta_1}_\delta \bu_\delta) \right)  \right\|_{L^{\frac{2\gamma}{\gamma-1}}(\Omega_w(t))} + \bigg\|\mathcal B \Big[(\theta_1-1)\varrho^{\theta_1}_\delta \rmdiv \bu_\delta -\frac{1}{|\Omega_w(t)|} \int_{\Omega_w(t)} (\theta_1-1)\varrho^{\theta_1}_\delta \rmdiv \bu_\delta  \Big] \bigg\|_{L^{\frac{2\gamma}{\gamma-1}}(\Omega_w(t)) }  \\
& \leq C\|\varrho^{\theta_1}\bu_{\delta}\|_{L^{\frac{2\gamma}{\gamma-1}}(\Omega_w(t))} +
\bigg\|\mathcal B \Big[(\theta_1-1)\varrho^{\theta_1}_\delta \rmdiv \bu_\delta -\frac{1}{|\Omega_w(t)|} \int_{\Omega_w(t)} (\theta_1-1)\varrho^{\theta_1}_\delta \rmdiv \bu_\delta  \Big] \bigg\|_{W^{1,\frac{2\gamma}{2\gamma-1}}(\Omega_w(t))} \\ 
&\leq C\|\varrho^{\theta_1}\bu_{\delta}\|_{L^{\frac{2\gamma}{\gamma-1}}(\Omega_w(t) )} 
+ C \|\varrho^{\theta_1}_\delta \rmdiv \bu_\delta \|_{L^{\frac{2\gamma}{2\gamma-1}}(\Omega_w(t)) }
\\
&\leq C\|\varrho^{\theta_1}_{\delta}\|_{L^{\frac{2\gamma p}{p(\gamma-1)-2\gamma}}(\Omega_w(t) )}\|\bu_{\delta}\|_{L^p(\Omega_w(t))}+C\|\varrho^{\theta_1}_{\delta}\|_{L^{\frac{2\gamma}{\gamma-1}}(\Omega_w(t) )}\|\mathrm{div}\bu_{\delta}\|_{L^2(\Omega_w(t)) } , 
\end{align*}
provided  $p > \frac{2\gamma}{\gamma-1}$, where we have used the embedding $W^{1,\frac{2\gamma}{2\gamma-1}}(\Omega_w(t)) \hookrightarrow L^{\frac{2\gamma}{\gamma-1}}(\Omega_w(t))$.

Then, using H\"older's inequality, \eqref{uniform-bound-delta}$_4$, \eqref{uni-bound-delta-u} and \eqref{uni-bound-delta-rho-u}, we have 
\begin{align*}
|\mathcal{J}_2|\leq & C\|\tilde{\psi}\|_{L^\infty(0,T)}\|\varrho_\delta\bu_\delta\|_{L^\infty(0,T;L^{\frac{2\gamma}{\gamma+1}}(\Omega_w(t))}
\bigg(\|\varrho^{\theta_1}_{\delta}\|_{L^\infty(0,T;L^{\frac{2\gamma p}{p(\gamma-1)-2\gamma}}(\Omega_w(t) )) }\|\bu_{\delta}\|_{L^2(0,T; L^p(\Omega_w(t)))} \\
&+\|\varrho^{\theta_1}_{\delta}\|_{L^\infty(0,T;L^{\frac{2\gamma}{\gamma-1}}(\Omega_w(t) ))}\|\bu_{\delta} \|_{L^2(0,T; W^{1,2}(\Omega_w(t)))}\bigg) \leq C \|\tilde{\psi}\|_{L^\infty(0,T)} , 
\end{align*}
provided that 
\begin{align*}
&\theta_1\frac{2\gamma p}{p(\gamma-1)-2\gamma}\leq \gamma,  \text{ and }  \theta_1\frac{2\gamma}{\gamma-1}\leq \gamma,\\
 \text{i.e., } & \theta_1\leq \frac{\gamma-1}{2}-\frac{\gamma}{p},   \text{ and } \theta_1\leq \frac{\gamma-1}{2},
\end{align*}
thus $\mathcal{J}_2$ is bounded uniformly in $\delta$ if $\theta_1<\frac{\gamma-1}{2}$.

\vspace*{.2cm}

(iii) Let us focus on the term $\mathcal J_3$. Using H\"older's inequality we have   %
\begin{align*}
|\mathcal J_3| & =\left|\int_0^T \partial_t \tilde \psi \int_{\Omega_w(t)} \varrho_\delta \bu_{\delta}     \cdot \mathcal B \left(\varrho^{\theta_1}_{\delta} - [\varrho^{\theta_1}_\delta]_{\Omega_w(t)}  \right)     \right| \\
 & \leq \| \partial_t \tilde \psi\|_{L^\infty(0,T)} \int_0^T \| \varrho_\delta \bu_\delta \|_{L^{\frac{2\gamma}{\gamma+1}}(\Omega_w(t))} \| \mathcal B \left(\varrho^{\theta_1}_{\delta} - [\varrho^{\theta_1}_\delta]_{\Omega_w(t)}  \right) \|_{L^{\frac{2\gamma}{\gamma-1}}(\Omega_w(t)) } \\
 & \leq C \| \varrho_\delta \bu_\delta \|_{L^\infty(0,T; L^{\frac{2\gamma}{\gamma+1}}(\Omega_w(t)))} \int_0^T \| \mathcal B \left(\varrho^{\theta_1}_{\delta} - [\varrho^{\theta_1}_\delta]_{\Omega_w(t)}  \right) \|_{W^{1,\frac{2\gamma}{2\gamma-1}}(\Omega_w(t)) } \\
 & \leq C \| \varrho_\delta \bu_\delta \|_{L^\infty(0,T; L^{\frac{2\gamma}{\gamma+1}}(\Omega_w(t)))} \|\varrho^{\theta_1}_\delta \|_{L^\infty(0,T; L^{\frac{2\gamma}{2\gamma-1}}(\Omega_w(t) )} \leq C,
\end{align*}
where we use the bound \eqref{uni-bound-delta-rho-u} and \eqref{uniform-bound-delta}$_4$ on condition that 
$\theta_1\leq \gamma-\frac{1}{2}$.


\vspace*{.2cm}
(iv) Under condition that $p>\frac{2\gamma}{\gamma-1}$, and $\eqref{uniform-bound-delta}_4$, $\eqref{uni-bound-delta-u}$, and the properties  of  Bogovski\u{\i}'s operator,  the term $\mathcal J_4$ can be estimated as 
\begin{align*}
|\mathcal J_4| &= \left| \int_0^T \tilde \psi \int_{\Omega_w(t)} \varrho_\delta \bu_\delta \otimes \bu_\delta : \nabla \mathcal B \left( \varrho_\delta^{\theta_1} -[\varrho^{\theta_1}_\delta]_{\Omega_w(t)}  \right)   \right| \\
& \leq \int_0^T |\tilde \psi| \|\varrho_\delta\|_{L^\gamma(\Omega_w(t))} \|\bu_\delta\|^2_{L^p(\Omega_w(t))} \left\|\nabla \mathcal B \left( \varrho_\delta^{\theta_1} -[\varrho^{\theta_1}_\delta]_{\Omega_w(t)}  \right) \right\|_{L^{ \frac{p\gamma}{p(\gamma-1)-2\gamma}}(\Omega_w(t)) }\\
& \leq \|\tilde \psi\|_{L^\infty(0,T)} \|\varrho_\delta\|_{L^\infty(0,T; L^{\gamma}(\Omega_w(t)))} \|\bu_\delta\|^2_{L^2(0,T;  L^p(\Omega_w(t)))} \|\varrho^{\theta_1}_{\delta}\|_{L^\infty(0,T;  L^{ \frac{p\gamma}{p(\gamma-1)-2\gamma}}(\Omega_w(t)))    } \\
& \leq C, 
\end{align*}
provided we consider 
\begin{align*}
\theta_1 \frac{p\gamma}{p(\gamma-1)-2\gamma} \leq \gamma 
\ \text{ i.e., } \ \theta_1\leq \gamma-1-\frac{2\gamma}{p}. 
\end{align*}

\vspace*{.2cm}
(v) Finally, assuming  $\theta_1\leq \frac{\gamma}{4}$, using H\"older's inequality and $\eqref{uniform-bound-delta}_4$,  we get 
\begin{align*}
|\mathcal{J}_5|  &= \left|\int_0^T \tilde \psi \int_{\Omega_w(t)} \bbS(\vartheta_\delta, \nabla \bu_\delta): \nabla \mathcal B \left(\varrho^{\theta_1}_\delta - [\varrho^{\theta_1}_\delta]_{\Omega_w(t)}  \right)   \right| \\
&\leq   C\|\tilde{\psi}\|_{L^\infty(0,T)} 
\int_0^T \int_{\Omega_w(t)} \|\bbS(\vartheta_\delta, \nabla \bu_\delta)\|_{L^{\frac{4}{3}}(\Omega_w(t))} \|\varrho_\delta^{\theta_1}\|_{L^4(\Omega_w(t))} \\
& \leq C\|\tilde{\psi}\|_{L^\infty(0,T)} \|\varrho_\delta^{\theta_1}\|_{L^4(\Omega_w(t))} \int_0^T \| \bbS(\vartheta_\delta, \nabla \bu_\delta)\|_{L^{\frac{4}{3}}(\Omega_w(t))} \\ 
&\leq C,
\end{align*}
where we have used the following fact:
\begin{align*}
\int_0^T \| \bbS(\vartheta_\delta, \nabla \bu_\delta)\|_{L^{\frac{4}{3}}(\Omega_w(t))} 
&\leq C \int_0^T \left( \int_{\Omega_w(t)} \bigg| \frac{1}{\sqrt{\vartheta_\delta}} \sqrt{\bbS(\vartheta_\delta, \nabla \bu_\delta) : \nabla \bu_\delta} \bigg|^{\frac{4}{3}} |\vartheta_\delta|^{\frac{4}{3}}   
\right)^{\frac{3}{4}}\\
& \leq C \int_0^T \left(\int_{\Omega_{w}(t)} \frac{1}{\vartheta_\delta} \left|\bbS(\vartheta_\delta, \nabla \bu_\delta): \nabla \bu_\delta \right| \right)^{\frac{1}{2}} \left(\int_{\Omega_w(t)} \vartheta^4_\delta\right)^{\frac{1}{4}} \\
& \leq C \left( \int_0^T \int_{\Omega_w(t)} \frac{1}{\vartheta_\delta} \left|\bbS(\vartheta_\delta, \nabla \bu_\delta): \nabla \bu_\delta\right| \right)^{\frac{1}{2}} \|\vartheta_\delta\|_{L^\infty(0,T; L^4(\Omega_w(t))) } \\
& \leq C ,
\end{align*}
thanks to $\eqref{uniform-bound-delta}_4$ and $\eqref{uniform-bound-delta}_6$.

Combining the above estimates, and choosing $\theta_1$ such that $\theta_1\leq \frac{\gamma-1}{2}-\frac{\gamma}{p}$ and $\theta_1\leq \frac{\gamma}{4}$, we obtain 
\begin{align}\label{extra-density}
\int_{0}^T\int_{\Omega_w(t)}\left( p(\varrho_\delta, \vartheta_\delta) +\delta(\varrho_{\delta}+ b_{\delta})^{\beta} + \frac{1}{2}b^2_{\delta} \right) \varrho^{\theta_1}_{\delta} \leq C   .
\end{align}






\vspace*{.2cm}

$\bullet$ Next, we choose the test function
$(\bmphi, \psi)=\bigg(\tilde \psi \mathcal{B}\left(b^{\theta_2}_\delta-[b^{\theta_2}_{\delta}]_{\Omega_w(t)}\right), 0\bigg]$  
  with \\ $[b_{\delta}^{\theta_2}]_{\Omega_w(t)}=\frac{1}{|\Omega_w(t)|}\int_{\Omega_w(t)}b^{\theta_2}_{\delta}$  for each $t\in [0,T]$, and 
 $\tilde \psi \in C_c^1(0,T)$ 
in $\eqref{mom-equ-delta}$. 
As a consequence, we have 
\begin{align*}
&\int_0^T \tilde \psi \int_{\Omega_w(t)}\bigg(p(\varrho_\delta, \vartheta_\delta)+\delta(\varrho_{\delta}+b_{\delta})^\beta+\frac{1}{2}b^2_\delta\bigg)b_\delta^{\theta_2}\\
=&\frac{1}{|\Omega_w(t)|}\int_0^T\tilde \psi\int_{\Omega_w(t)}[b^{\theta_2}_{\delta}]_{\Omega_w(t)}
\bigg(p(\varrho_\delta, \vartheta_\delta)+\delta(\varrho_{\delta}+b_{\delta})^\beta+\frac{1}{2}b^2_\delta\bigg) 
\\
& -\int_0^T \tilde \psi \int_{\Omega_w(t)}\varrho_{\delta}\bu_{\delta}\cdot\partial_t\mathcal{B}\left(b_\delta^{\theta_2}-[b_{\delta}^{\theta_2}]_{\Omega_w(t)}\right) \\ 
& - \int_0^T \partial_t \tilde{\psi} \int_{\Omega_w(t)} \varrho_{\delta}\bu_{\delta}\cdot\mathcal{B}\left(b_\delta^{\theta_2}-[b_{\delta}^{\theta_2}]_{\Omega_w(t)})\right)\\
&  -\int_0^T \tilde{\psi} \int_{\Omega_w(t)}\varrho_{\delta}\bu_{\delta}\otimes\bu_{\delta}:\nabla\mathcal{B}\left(b_\delta^{\theta_2} -[b_{\delta}^{\theta_2}]_{\Omega_w(t)}\right)\\
&+\int_0^T \tilde \psi \int_{\Omega_w(t)}\bbS(\vartheta_{\delta},\nabla\bu_{\delta}):\nabla \mathcal{B}\left(b_\delta^{\theta_2} -[b_{\delta}^{\theta_2}]_{\Omega_w(t)}\right) \\
&:=\sum_{i=1}^5\mathcal{I}_{i}.
\end{align*}

(i) Provided that $\theta_2\leq 2$, we use H\"older's inequality, $\eqref{uniform-bound-delta}_3$, and $\eqref{uniform-bound-delta}_4$ to get 
\begin{align*}
|\mathcal{I}_1|&\leq C\|\tilde{\psi}\|_{L^\infty(0,T)}\|b_\delta^{\theta_2}\|_{L^1(0,T; L^1(\Omega_w(t)))}\|p(\varrho_\delta,\vartheta_\delta)+\delta(\varrho_{\delta}+b_{\delta})^\beta+b^2_\delta\|_{L^\infty(0,T;L^1(\Omega_w(t)))}\\
&\leq C \|\tilde{\psi}\|_{L^\infty(0,T)}.
\end{align*}

\vspace*{.2cm}

(ii) Regarding the term $\mathcal{I}_2$, we  employ the renormalized continuity equation
\begin{align*}
a(b_\delta)_t + \rmdiv ( a(b_\delta) \bu_\delta) +  \left(a^\prime(b_\delta) b_\delta - a(b_\delta) \right) \rmdiv \bu_\delta = 0 , 
\end{align*}
with $a(b_\delta)=b^{\theta_2}_\delta$.
Consequently, we have 
\begin{align*}
|\mathcal I_2 | &= \left|    \int_0^T \tilde \psi \int_{\Omega_w(t)}  \varrho_\delta \bu_\delta  \cdot \partial_t \mathcal B \left(b_\delta^{\theta_2} - [b_\delta^{\theta_2}]_{\Omega_w(t)}  \right)     \right| \\
& \leq \|\tilde \psi\|_{L^\infty(0,T)} \int_{0}^T \|\varrho_\delta \bu_\delta\|_{L^{\frac{2\gamma}{\gamma+1}}(\Omega_w(t)} \left\|  \partial_t \mathcal B \left(b_\delta^{\theta_2} - [b_\delta^{\theta_2}]_{\Omega_w(t)}  \right) \right\|_{L^{\frac{2\gamma}{\gamma-1}}(\Omega_w(t))} .
\end{align*}

We have (using properties of Bogovski\u{\i}'s operator),
\begin{align*}
&\left\|\partial_t\mathcal{B}\left(b_\delta^{\theta_2}-[b_{\delta}^{\theta_2}]_{\Omega_w(t)}\right)\right\|_{L^{\frac{2\gamma}{\gamma-1}}(\Omega_w(t))} \\
& \leq \left\| \mathcal B \left( \rmdiv (b^{\theta_2}_\delta \bu_\delta) \right)  \right\|_{L^{\frac{2\gamma}{\gamma-1}}(\Omega_w(t))} \bigg\|\mathcal B \Big[(\theta_2-1)b^{\theta_2}_\delta \rmdiv \bu_\delta -\frac{1}{|\Omega_w(t)|} \int_{\Omega_w(t)} (\theta_2-1)b^{\theta_2}_\delta \rmdiv \bu_\delta  \Big] \bigg\|_{L^{\frac{2\gamma}{\gamma-1}}(\Omega_w(t)) }  \\
& \leq C\|b^{\theta_2}\bu_{\delta}\|_{L^{\frac{2\gamma}{\gamma-1}}(\Omega_w(t))} 
\bigg\|\mathcal B \Big[(\theta_2-1)b^{\theta_2}_\delta \rmdiv \bu_\delta -\frac{1}{|\Omega_w(t)|} \int_{\Omega_w(t)} (\theta_2-1)b^{\theta_2}_\delta \rmdiv \bu_\delta  \Big] \bigg\|_{W^{1,\frac{2\gamma}{2\gamma-1}}(\Omega_w(t))} \\ 
&\leq C\|b^{\theta_2}\bu_{\delta}\|_{L^{\frac{2\gamma}{\gamma-1}}(\Omega_w(t) )} 
+ C \|b^{\theta_2}_\delta \rmdiv \bu_\delta \|_{L^{\frac{2\gamma}{2\gamma-1}}(\Omega_w(t)) }
\\
&\leq C\|b^{\theta_2}_{\delta}\|_{L^{\frac{2\gamma p}{p(\gamma-1)-2\gamma}}(\Omega_w(t) )}\|\bu_{\delta}\|_{L^p(\Omega_w(t))}+
C\|b^{\theta_2}_{\delta}\|_{L^{\frac{2\gamma}{\gamma-1}}(\Omega_w(t) )}\|\mathrm{div}\bu_{\delta}\|_{L^2(\Omega_w(t)) } , 
\end{align*}
provided  $p > \frac{2\gamma}{\gamma-1}$, where we have used the embedding $W^{1,\frac{2\gamma}{2\gamma-1}}(\Omega_w(t)) \hookrightarrow L^{\frac{2\gamma}{\gamma-1}}(\Omega_w(t))$.

Then, using H\"older's inequality, \eqref{uniform-bound-delta}$_4$, \eqref{uni-bound-delta-u} and \eqref{uni-bound-delta-rho-u}, we get 
\begin{align*}
|\mathcal{I}_2|\leq & C\|\tilde{\psi}\|_{L^\infty(0,T)}\|\varrho_\delta\bu_\delta\|_{L^\infty(0,T;L^{\frac{2\gamma}{\gamma+1}}(\Omega_w(t))}
\bigg(\|b^{\theta_2}_{\delta}\|_{L^\infty(0,T;L^{\frac{2\gamma p}{p(\gamma-1)-2\gamma}}(\Omega_w(t) )) }\|\bu_{\delta}\|_{L^2(0,T; L^p(\Omega_w(t)))} \\
&+\|b^{\theta_2}_{\delta}\|_{L^\infty(0,T;L^{\frac{2\gamma}{\gamma-1}}(\Omega_w(t) ))}\|\bu_{\delta} \|_{L^2(0,T; W^{1,2}(\Omega_w(t)))}\bigg) \leq C \|\tilde{\psi}\|_{L^\infty(0,T)} , 
\end{align*}
provided that 
\begin{align*}
&\theta_2\frac{2\gamma p}{p(\gamma-1)-2\gamma}\leq 2,  \text{ and }  \theta_2\frac{2\gamma}{\gamma-1}\leq 2,\\
 \text{i.e., } & \theta_2\leq \frac{\gamma-1}{\gamma}-\frac{2}{p},   \text{ and } \theta_2\leq \frac{\gamma-1}{\gamma}.
\end{align*}
Thus $\mathcal{I}_2$ is bounded uniformly in $\delta$ if $\theta_2< 1- \frac{1}{\gamma}-\frac{2}{p}$ on condition that $p>2\gamma/(\gamma-1)$.

\vspace*{.2cm}

(iii) Let us focus on the term $\mathcal I_3$. Using H\"older's inequality we have   %
\begin{align*}
|\mathcal I_3| & =\left|\int_0^T \partial_t \tilde \psi \int_{\Omega_w(t)} \varrho_\delta \bu_{\delta}     \cdot \mathcal B \left(b^{\theta_2}_{\delta} - [b^{\theta_2}_\delta]_{\Omega_w(t)}  \right)     \right| \\
 & \leq \| \partial_t \tilde \psi\|_{L^\infty(0,T)} \int_0^T \| \varrho_\delta \bu_\delta \|_{L^{\frac{2\gamma}{\gamma+1}}(\Omega_w(t))} \| \mathcal B \left(b^{\theta_2}_{\delta} - [b^{\theta_2}_\delta]_{\Omega_w(t)}  \right) \|_{L^{\frac{2\gamma}{\gamma-1}}(\Omega_w(t)) } \\
 & \leq C \| \varrho_\delta \bu_\delta \|_{L^\infty(0,T; L^{\frac{2\gamma}{\gamma+1}}(\Omega_w(t)))} \int_0^T \| \mathcal B \left(b^{\theta_2}_{\delta} - [b^{\theta_2}_\delta]_{\Omega_w(t)}  \right) \|_{W^{1,\frac{2\gamma}{2\gamma-1}}(\Omega_w(t)) } \\
 & \leq C \| \varrho_\delta \bu_\delta \|_{L^\infty(0,T; L^{\frac{2\gamma}{\gamma+1}}(\Omega_w(t)))} \|b^{\theta_2}_\delta \|_{L^\infty(0,T; L^{\frac{2\gamma}{2\gamma-1}}(\Omega_w(t) )} \leq C,
\end{align*}
where we have used the bound \eqref{uni-bound-delta-rho-u} and \eqref{uniform-bound-delta}$_3$ on condition that 
$\theta_2\leq 2-\frac{1}{\gamma}$.


\vspace*{.2cm}
(iv) Under condition that $p>\frac{2\gamma}{\gamma-1}$, and $\eqref{uniform-bound-delta}_4$, $\eqref{uni-bound-delta-u}$, and the properties  of  Bogovski\u{\i}'s operator,  the term $\mathcal J_4$ can be estimated as 
\begin{align*}
|\mathcal I_4| &= \left| \int_0^T \tilde \psi \int_{\Omega_w(t)} \varrho_\delta \bu_\delta \otimes \bu_\delta : \nabla \mathcal B \left( b_\delta^{\theta_2} -[b^{\theta_2}_\delta]_{\Omega_w(t)}  \right)   \right| \\
& \leq C \int_0^T |\tilde \psi| \|\varrho_\delta\|_{L^\gamma(\Omega_w(t))} \|\bu_\delta\|^2_{L^p(\Omega_w(t))} \left\|\nabla \mathcal B \left( b_\delta^{\theta_2} -[b^{\theta_2}_\delta]_{\Omega_w(t)}  \right) \right\|_{L^{ \frac{p\gamma}{p(\gamma-1)-2\gamma}}(\Omega_w(t)) }\\
& \leq C \|\tilde \psi\|_{L^\infty(0,T)} \|\varrho_\delta\|_{L^\infty(0,T; L^{\gamma}(\Omega_w(t)))} \|\bu_\delta\|^2_{L^2(0,T;  L^p(\Omega-w(t)))} \|b^{\theta_2}_{\delta}\|_{L^\infty(0,T;  L^{ \frac{p\gamma}{p(\gamma-1)-2\gamma}}(\Omega_w(t)))    } \\
& \leq C, 
\end{align*} 
provided we consider 
\begin{align*}
\theta_2 \frac{p\gamma}{p(\gamma-1)-2\gamma} \leq  2  
\ \text{ i.e., } \ \theta_2 \leq 2\left(\frac{\gamma-1}{\gamma} -\frac{2}{p}\right). 
\end{align*}

\vspace*{.2cm}
(v) Finally, assuming  $\theta_2\leq \frac{1}{2}$, using H\"older's inequality and $\eqref{uniform-bound-delta}_4$,  we get 
\begin{align*}
|\mathcal{I}_5|  &= \left|\int_0^T \tilde \psi \int_{\Omega_w(t)} \bbS(\vartheta_\delta, \nabla \bu_\delta): \nabla \mathcal B \left(b^{\theta_2}_\delta - [b^{\theta_2}_\delta]_{\Omega_w(t)}  \right)   \right| \\
&\leq   C\|\tilde{\psi}\|_{L^\infty(0,T)} 
\int_0^T \int_{\Omega_w(t)} \|\bbS(\vartheta_\delta, \nabla \bu_\delta)\|_{L^{\frac{4}{3}}(\Omega_w(t))} \|b_\delta^{\theta_2}\|_{L^4(\Omega_w(t))} \\
& \leq C\|\tilde{\psi}\|_{L^\infty(0,T)} \|b_\delta^{\theta_2}\|_{L^4(\Omega_w(t))} \int_0^T \| \bbS(\vartheta_\delta, \nabla \bu_\delta)\|_{L^{\frac{4}{3}}(\Omega_w(t))} \\ 
&\leq C,
\end{align*}
where we have used the following fact:
\begin{align*}
\int_0^T \| \bbS(\vartheta_\delta, \nabla \bu_\delta)\|_{L^{\frac{4}{3}}(\Omega_w(t))} 
&\leq C \int_0^T \left( \int_{\Omega_w(t)} \bigg| \frac{1}{\sqrt{\vartheta_\delta}} \sqrt{\bbS(\vartheta_\delta, \nabla \bu_\delta) : \nabla \bu_\delta} \bigg|^{\frac{4}{3}} |\vartheta_\delta|^{\frac{4}{3}}   
\right)^{\frac{3}{4}}\\
& \leq C \int_0^T \left(\int_{\Omega_{w}(t)} \frac{1}{\vartheta_\delta} \left|\bbS(\vartheta_\delta, \nabla \bu_\delta): \nabla \bu_\delta \right| \right)^{\frac{1}{2}} \left(\int_{\Omega_w(t)} \vartheta^4_\delta\right)^{\frac{1}{4}} \\
& \leq C \left( \int_0^T \int_{\Omega_w(t)} \frac{1}{\vartheta_\delta} \left|\bbS(\vartheta_\delta, \nabla \bu_\delta): \nabla \bu_\delta\right| \right)^{\frac{1}{2}} \|\vartheta_\delta\|_{L^\infty(0,T; L^4(\Omega_w(t))) } \\
& \leq C ,
\end{align*}
thanks to $\eqref{uniform-bound-delta}_4$ and $\eqref{uniform-bound-delta}_6$.


Combining the above estimates, and choosing $\theta_2$ such that $\theta_2\leq \frac{\gamma-1}{\gamma} - \frac{2}{p}$ for some $p> \frac{2\gamma}{\gamma-1}$ and $\theta_2\leq \frac{1}{2}$, we obtain 
\begin{align}\label{extra-magnetic}
\int_{0}^T\int_{\Omega_w(t)}\left( p(\varrho_\delta, \vartheta_\delta) +\delta(\varrho_{\delta}+ b_\delta)^\beta + \frac{1}{2}b^2_\delta \right) b^{\theta_2}_\delta \leq C   .
\end{align}

Thus, by means of \eqref{extra-density} and \eqref{extra-magnetic}, we conclude  the proof.
\end{proof}

\begin{remark}\label{Remark-gamma-condition}

Observe that, to obtain $\varrho \in L^2(0,T; L^2(\Omega_w(t)))$ for the validation of renormalized solutions of the continuity equation, and later on the pointwise convergence of the densities $\{\varrho_\delta\}_{\delta}$ in $(0,T)\times \Omega_w(t)$  (see,   in particular the limit \eqref{auxiliary-T_k-T-rho}),  one needs the condition that
\begin{align*}
\gamma+\theta_1 > 2 , \text{ which implies } \gamma >  2-\theta_1 \geq  2 - \frac{\gamma-1}{2} + \frac{\gamma}{p}  .
\end{align*}
Since, one may consider $p>\frac{2\gamma}{\gamma-1}$ arbitrarily large, so it is sufficient to consider the condition 
\begin{align*}
\gamma > \frac53
\end{align*}
in the whole analysis to fulfill the required criterion.
\end{remark}

\vspace*{.4cm}

In the next two subsections, we shall concentrate on the strong convergence of fluid's temperature $\vartheta_\delta$ and density $\varrho_{\delta}$. We omit other convergence results which are similar to the previous layer's convergence. 

We begin with the following: using $\eqref{Delta-t-uni-bound}$ and structural assumptions $\eqref{state-equ}$, $\eqref{ent-sta}$, we get 
\begin{align}
\|\varrho_{\delta} s(\varrho_{\delta},\vartheta_{\delta})\|_{L^q(Q_T^w)}+\|\varrho_{\delta} s(\varrho_{\delta},\vartheta_{\delta})\bu_{\delta}\|_{L^s(Q_T^w)}+\left\|\frac{\kappa{(\vartheta_{\delta})}\nabla\vartheta_{\delta}}{\vartheta_{\delta}}\right\|_{L^r(Q_T^w)}\leq C, 
\end{align}
for some $q, s, r>1$.

\subsection{Strong convergence of the fluid temperature} 
Similar to Section \ref{Section-strong-conv-temp-Delta-t} (in the layer of convergence w.r.t. $\Delta t$),  we also set 
\begin{align}\label{Z-G-delta}
&{\bf{Z}}_{\delta}:=\bigg(\varrho_{\delta}s(\varrho_{\delta}, \vartheta_{\delta}), \hspace{0.1cm}\varrho_{\delta}s(\varrho_{\delta}, \vartheta_{\delta}) \bu_{\delta}-\frac{\kappa(\vartheta_{\delta})\nabla\vartheta_{\delta}}{\vartheta_{\delta}}\bigg),\notag \\
&{{\bf{G}}_{\delta}}:=(G(\vartheta_{\delta}),0,0), 
\end{align}
where $G(\cdot)$ is a bounded and Lipschitz function 
on $(0,\infty)$.
From the uniform estimates (w.r.t. $\delta$) in the previous subsections, one can deduce that 
$\mathrm {Curl}_{t,x}{\bf{G}}_{\delta}$ is precompact in $W^{-1, q}(Q_T^w)$ for some $q>1$.  

According to the $\eqref{entropy-ine-delta}$ and $\psi|_{\Gamma_T}=\varphi$, it holds that 
\begin{align*} 
\int_{Q_T^w}\mathrm{Div}_{t,x}{\bf{Z}}_{\delta}\psi&=-\langle\mathcal{D}_{\delta};\psi\rangle_{[\mathcal{M};C](Q_T^w)}-\int_{\Gamma_T}(\theta_t\varphi+\Delta\theta\varphi-\Delta w\varphi_t)\notag \\
&\leq C\|\psi\|_{W^{k,l}(Q_T^w)}, 
\end{align*}
for some $k>1$ and $l>3$. 
Then we get the compactness of the quantity $\mathrm{Div}_{t,x}{\bf{Z}}_{\delta}$, namely
\begin{align*}
    \mathrm{Div}_{t,x}{\bf{Z}}_{\delta}\in W^{-k, l^*}(Q_T^w)\Subset W^{-1, l'}(Q_T^w), \hspace{0.3cm}\mathrm{with}\hspace{0.3cm}  l'\in (1,\frac{3}{2}). 
\end{align*}
Furthermore, the Div-Curl lemma  (see \cite{Ta-79} and \cite[Theorem 10.12]{FN-09}) leads to
\begin{align}\label{varrho-s-G-delta}
\overline{\varrho s(\varrho,\vartheta)G(\varrho)}=\overline{\varrho s(\varrho,\vartheta)}\hspace{0.1cm} \overline{G(\varrho)}, \hspace{0.3cm} \mathrm{a.e.}\hspace{0.1cm}\mathrm{in}\hspace{0.1cm} (0,T)\times \Omega_w(t). 
\end{align}

In order to get the strong convergence of $\vartheta$, we again use the parameterized Young measures theory to derive the following fact:
\begin{align}\label{varrho-s-M-G-delta}
\overline{\varrho s_M(\varrho,\vartheta)G(\varrho)}\geq \overline{\varrho s_M(\varrho,\vartheta)}\overline{G(\varrho)}, \hspace{0.2cm} \overline{\vartheta^3G(\vartheta)}\geq \overline{\vartheta^3}\hspace{0.1cm}\overline{G(\vartheta)}, \hspace{0.3cm} \mathrm{a.e.}\hspace{0.1cm}\mathrm{in}\hspace{0.1cm} (0,T)\times\Omega_w(t).
\end{align}

Thus, we use $\eqref{varrho-s-G-delta}$ and $\eqref{varrho-s-M-G-delta}$ to get 
\begin{align*} 
\overline{\vartheta^4}=\overline{\vartheta^3}\vartheta,
\end{align*}
together with the monotonicity of $\vartheta^4$, it holds that 
\begin{align}\label{convergence-vartheta-delta}
\vartheta_{\delta}\to \vartheta,\hspace{0.3cm} \mathrm{a.e.}\hspace{0.1cm} \mathrm{in}\hspace{0.1cm} (0,T)\times\Omega_w(t).
\end{align}

\subsection{Strong convergence of the fluid density and magnetic field}

Setting $L_k(r)$ as the truncation of $r\log r$, where 
\begin{equation*} 
    L_k(r)=\left\{\begin{aligned}
    &r \log r, \hspace{0.5cm} \mathrm{for} \hspace{0.1cm}0\leq r\leq k,\\
   & r \log k+r\int_1^r\frac{T_k(z)}{z^2},\hspace{0.5cm} \mathrm{for} \hspace{0.1cm}r>k, 
    \end{aligned}
    \right.
\end{equation*}
and 
\begin{align}\label{T-k-z}
 T_k(z)=kT\left(\frac{z}{k}\right),
\end{align}
where 

\begin{equation}\label{T-z}
  T(z)=\left\{\begin{split}
& z \hspace{0.5cm}\mathrm{for}\hspace{0.1cm} z\in [0,1)\\
&\textnormal{concave} \hspace{0.5cm}\mathrm{for}\hspace{0.1cm}  z\in [1,3),\\
&2 \hspace{0.5cm}\mathrm{for}\hspace{0.1cm}  z\geq 3.
\end{split}
\right.
\end{equation}

\begin{lemma}
  Let $(\varrho_{\delta}, b_{\delta})$ be solution derived in Section \ref{Section-limit-system-other parameters} and $(\varrho, b)$ is the corresponding limit w.r.t. $\delta\to 0$. Then it has 
\begin{align}
&\overline{T_k(\varrho)}\overline{\bigg(\varrho^{\gamma}+\frac{1}{2}b^2\bigg)}\leq \overline{T_k(\varrho)\bigg(\varrho^{\gamma}+\frac{1}{2}b^2\bigg)},\label{varrho-press}\\
&
\overline{T_k(b)}\overline{\bigg(\varrho^{\gamma}+\frac{1}{2}b^2\bigg)}\leq \overline{T_k(b)\bigg(\varrho^{\gamma}+\frac{1}{2}b^2\bigg)},
\quad \mathrm{a.e.}\hspace{0.1cm}\mathrm{in}\hspace{0.1cm} (0,T)\times\Omega_w(t).\label{b-press}
\end{align}
    
\end{lemma}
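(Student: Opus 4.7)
The plan is to establish \eqref{varrho-press} and \eqref{b-press} via the classical ``effective viscous flux'' technique of Lions--Feireisl, adapted to accommodate the magnetic pressure contribution $\frac{1}{2}b^2$. First I would extend $(\varrho_\delta, b_\delta, \bu_\delta)$ by zero outside $\Omega_{w_\delta}(t)$; since $\bu_\delta$ satisfies the kinematic condition $w_t\bfn=\bu|_{\Gamma_{w_\delta}(t)}$, these zero extensions continue to satisfy the renormalized continuity/transport equations on all of $\mathbb{R}^2$. Then, choosing a spatial cutoff $\eta \in C_c^\infty(\Omega_w(t))$ (from a sequence exhausting the moving domain on small subintervals) and a temporal cutoff $\zeta \in C_c^\infty(0,T)$, I would test the $\delta$-momentum equation \eqref{mom-equ-delta} with
\[
\bmphi_\delta(t,x) = \zeta(t)\,\eta(x)\,\nabla \Delta^{-1}\bigl[T_k(\varrho_\delta)\bigr](t,x), \qquad \psi \equiv 0,
\]
and the limiting momentum equation (valid after extracting weak limits along $\delta\to 0$) with the analogous test function $\bmphi = \zeta\,\eta\,\nabla\Delta^{-1}[\overline{T_k(\varrho)}]$, where $\Delta^{-1}$ denotes the convolution with the Newtonian potential acting on the zero extension.

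Subtracting the two identities and passing $\delta\to 0$, the convective terms cancel via the Div--Curl lemma applied to the pair $(\varrho_\delta\bu_\delta,\,\nabla\Delta^{-1}[T_k(\varrho_\delta)])$ (whose time derivative is controlled by the renormalized continuity equation, given $\gamma>5/3$ and thus $\varrho_\delta \in L^2((0,T)\times\Omega_w(t))$ by Lemma~\ref{high-int-press}), while the viscous stress terms collapse thanks to the Feireisl--Riesz commutator identity. What remains is the effective-viscous-flux identity
\[
\lim_{\delta\to 0}\int_0^T\!\!\int \zeta\eta\bigl[p_\delta^{\mathrm{eff}}\,T_k(\varrho_\delta)\bigr] = \int_0^T\!\!\int\zeta\eta\bigl[\,\overline{p^{\mathrm{eff}}}\,\overline{T_k(\varrho)}\bigr],
\]
where $p_\delta^{\mathrm{eff}} := p(\varrho_\delta,\vartheta_\delta) + \tfrac12 b_\delta^2 + \delta(\varrho_\delta+b_\delta)^\beta - \bigl(2\mu(\vartheta_\delta)+\eta(\vartheta_\delta)\bigr)\rmdiv \bu_\delta$.

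From here, \eqref{varrho-press} follows by assembling three ingredients. First, monotonicity of $z\mapsto z^\gamma$ combined with parameterized Young measures yields $\overline{\varrho^\gamma T_k(\varrho)}\ge \overline{\varrho^\gamma}\,\overline{T_k(\varrho)}$; the $\varrho\vartheta$ and $\vartheta^4$ contributions are identified using the strong convergence $\vartheta_\delta\to\vartheta$ proved in \eqref{convergence-vartheta-delta}. Second, for the magnetic cross-term $\overline{b^2 T_k(\varrho)}$ I would reproduce the argument of Section~\ref{Section-strong-conv-dens-mag-Delta-t}: write $b=\mathcal{R}_b\mathsf{d}$ and $\varrho=\mathcal{R}_\varrho\mathsf{d}$ with $\mathsf{d}=\varrho+b$ satisfying a standard continuity equation, and use strong convergence of $\mathcal{R}_{\varrho_\delta},\mathcal{R}_{b_\delta}$ (an analogue of Lemma~\ref{strong-con}) together with the uniform higher integrability \eqref{high-int-varrho-b-delta} to obtain $\overline{b^2 T_k(\varrho)}\ge \overline{b^2}\,\overline{T_k(\varrho)}$. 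Third, the viscous term $(2\mu+\eta)(\vartheta)\rmdiv \bu\,T_k(\varrho)$ is handled by weak lower semicontinuity combined with the monotone dependence of the effective viscous flux on $\rmdiv\bu$. Combining the three gives \eqref{varrho-press}; for \eqref{b-press} one runs the same argument with the test function $\bmphi = \zeta\eta\nabla\Delta^{-1}[T_k(b_\delta)]$, using the renormalized transport equation for $b_\delta$.

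The principal obstacle is precisely the coupled structure of the pressure: the classical Feireisl machinery treats a scalar density against a pressure monotone in it, but here $\tfrac12 b^2$ is a separate contribution whose field $b$ only satisfies a continuity-like equation coupled to $\varrho$, and $T_k(\varrho)$ is not monotone in $b$. Overcoming this requires the strong convergence of the ratios $\mathcal{R}_{\varrho_\delta},\mathcal{R}_{b_\delta}$, which effectively reduces the two-field problem to the single ``total density'' $\mathsf{d}=\varrho+b$; secondary technical care is needed to keep the cutoff $\eta$ away from the moving boundary $\Gamma_w(t)$ so that the whole-space Riesz potential is admissible in the weak formulation.
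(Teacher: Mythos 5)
Your proposal does contain the one idea that actually carries this lemma — the reduction of the two-field product to monotone functions of the single total density $\mathsf{d}=\varrho+b$ via the ratios $\mathcal{R}_{\varrho_\delta},\mathcal{R}_{b_\delta}$, whose strong convergence (Lemma~\ref{strong-con}), combined with Egorov and the higher integrability of Lemma~\ref{high-int-press}, lets one replace $T_k(\varrho_\delta)$ by $T_k(\mathcal{R}_\varrho\mathsf{d}_\delta)$ and $\varrho_\delta^\gamma+\tfrac12 b_\delta^2$ by $(\mathcal{R}_\varrho\mathsf{d}_\delta)^\gamma+\tfrac12(\mathcal{R}_b\mathsf{d}_\delta)^2$ up to vanishing errors, after which the standard inequality $\overline{FG}\geq\overline{F}\,\overline{G}$ for nondecreasing functions of the same scalar sequence gives the claim. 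This is exactly the paper's proof (the paper keeps $(\mathcal{R}_\varrho z)^\gamma+\tfrac12(\mathcal{R}_b z)^2$ as one monotone function of $z$ rather than splitting off the $\varrho^\gamma$ piece, but that is cosmetic). Your ``first'' and ``second'' ingredients together already finish the proof.

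The problem is everything you have wrapped around this. The lemma is a pure weak-compactness statement about nonlinear functions of $(\varrho_\delta,b_\delta)$; it involves neither $\rmdiv\bu$ nor the viscous stress, so the momentum equation, the Bogovski\u{\i}/Riesz-potential test function $\nabla\Delta^{-1}[T_k(\varrho_\delta)]$, the commutator identity, and the effective-viscous-flux relation are not needed here and cannot be ``assembled'' into \eqref{varrho-press}. In the paper the identity for $p_M-(\mu+\eta)\rmdiv\bu$ is a \emph{separate, subsequent} lemma (Lemma~\ref{Lemma-weak-compact-effective-id}), and it is only \emph{combined with} \eqref{varrho-press}--\eqref{b-press} afterwards, in \eqref{difference-T-k-L}--\eqref{difference-T-k-L-2}, to deduce $\overline{L_k}\le L_k$ and hence the pointwise convergence of $\varrho_\delta$ and $b_\delta$. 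Your ``third ingredient'' (handling a viscous term by ``weak lower semicontinuity combined with the monotone dependence of the effective viscous flux on $\rmdiv\bu$'') therefore has no counterpart in the statement you are proving, and as phrased it is not a precise argument. Strip out the momentum-equation machinery, keep ingredients one and two, and make the Egorov step explicit (it is what gives the pointwise bounds $\mathcal{R}_\varrho\mathsf{d}_\delta\le\varrho_\delta+1$, $\mathcal{R}_b\mathsf{d}_\delta\le b_\delta+1$ needed to control the substitution errors and to treat the small exceptional set), and you have the paper's proof.
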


\begin{proof}
We define 
\begin{align*}
\mathsf{d}_{\delta}=\varrho_{\delta }+b_{\delta}
, \quad \mathsf{d} = \varrho + b 
\end{align*}
and 
\begin{align*}
(\mathcal{R}_{\varrho_{\delta}},\mathcal{R}_{b_{\delta}})=\left(\frac{\varrho_{\delta}}{\mathsf{d}_{\delta}}, \frac{b_{\delta}}{\mathsf{d}_{\delta}}\right) \ \text{ if } \mathsf{d}_{\delta} \neq 0,  \quad (\mathcal{R}_{\varrho},\mathcal{R}_{b})=\left(\frac{\varrho}{\mathsf{d}}, \frac{b}{\mathsf{d}}\right) \ \text{ if } \mathsf{d} \neq 0 . 
\end{align*} 
Thus, it has 
$0\leq \mathcal{R}_{\varrho_{\delta}},\mathcal{R}_{b_{\delta}}, \mathcal{R}_{\varrho},\mathcal{R}_{b}\leq 1$. 

By virtue of Lemma \ref{strong-con}, we have 
    \begin{align*} 
    \varrho_{\delta}-\mathcal{R}_{\varrho}\mathsf{d}_{\delta}\rightarrow 0, \quad b_{\delta}-\mathcal{R}_{b}\mathsf{d}_{\delta}\rightarrow 0,\quad \mathrm{a s}\hspace{0.1cm} \delta \rightarrow 0, \quad \mathrm{a.e.}\hspace{0.1cm}\mathrm{in}\hspace{0.1cm} (0,T)\times\Omega_w(t). 
    \end{align*}
Moreover, Egorov's theorem implies that for sufficient small positive constant $\sigma$, there exists $\widetilde{Q_T^w}\subset Q_T^w$ such that $|Q_T^w\setminus\widetilde{Q_T^w}|\leq \sigma$ and it has 
    \begin{align}\label{uniform-est}
    \varrho_{\delta}-\mathcal{R}_{\varrho}\mathsf{d}_{\delta}\rightarrow 0, \quad b_{\delta}-\mathcal{R}_{b}\mathsf{d}_{\delta}\rightarrow 0,\quad \mathrm{a s}\hspace{0.1cm} \delta \rightarrow 0, \quad \mathrm{uniformly}\hspace{0.1cm}\mathrm{in}\hspace{0.1cm} \widetilde{Q_T^w}. 
    \end{align}
    
In view of $\eqref{uniform-est}$, it reveals that there exists a positive constant $\delta_0$ such that
    \begin{align}\label{partial-bound}
\mathcal{R}_{\varrho}\mathsf{d}_{\delta}\leq \varrho_{\delta}+1, \quad \mathcal{R}_{b}\mathsf{d}_{\delta}\leq b_{\delta}+1, 
    \end{align}
for all $\delta\leq \delta_0$ and $(t,x)\in \widetilde{Q_T^w}$. Thus, we infer from Lemma $\ref{high-int-press}$ that $\mathcal{R}_{\varrho}\mathsf{d}_{\delta}\in L^{\gamma+\theta_1}(\widetilde{Q_T^w})$ and $\mathcal{R}_{b}\mathsf{d}_{\delta}\in L^{2+\theta_2}(\widetilde{Q_T^w})$.

First, we perform the following decomposition
    \begin{align}\label{varrho-pressure}
&\lim_{\delta\rightarrow 0}\int_{Q_T^w}\Phi T_k(\varrho_\delta)\bigg((\varrho_{\delta})^\gamma+\frac{1}{2}(b_{\delta})^2\bigg)\notag\\
&=\lim_{\delta\rightarrow 0}\int_{\widetilde{Q_T^w}}\Phi T_k(\varrho_\delta)\bigg((\varrho_{\delta})^\gamma+\frac{1}{2}(b_{\delta})^2\bigg)+\lim_{\delta\rightarrow 0}\int_{Q_T^w\setminus\widetilde{Q_T^w}}\Phi T_k(\varrho_\delta)\bigg((\varrho_{\delta})^\gamma+\frac{1}{2}(b_{\delta})^2\bigg), 
   \end{align}
for all non-negative $\Phi\in C(\overline{Q_T^w})$.

Then, we find 
 \begin{align*}
&\lim_{\delta\rightarrow 0}\int_{\widetilde{Q_T^w}}\Phi T_k(\varrho_\delta)\bigg((\varrho_{\delta})^\gamma+\frac{1}{2}(b_{\delta})^2\bigg)\\
&=\lim_{\delta\rightarrow 0}\int_{\widetilde{Q_T^w}}\Phi T_k(\mathcal{R}_{\varrho}\mathsf{d}_\delta)\bigg((\mathcal{R}_{\varrho}\mathsf{d}_\delta)^\gamma+\frac{1}{2}(\mathcal{R}_{b}\mathsf{d}_\delta)^2\bigg)\\
&\hspace{0.5cm}+\lim_{\delta\rightarrow 0}\int_{\widetilde{Q_T^w}}\Phi (T_k(\varrho_{\delta})-T_k(\mathcal{R}_{\varrho}\mathsf{d}_\delta))\bigg((\mathcal{R}_{\varrho}\mathsf{d}_\delta)^\gamma+\frac{1}{2}(\mathcal{R}_{b}\mathsf{d}_\delta)^2\bigg)\\
&\hspace{0.5cm}+\lim_{\delta\rightarrow 0}\int_{\widetilde{Q_T^w}}\Phi T_k(\varrho_{\delta})\bigg[\bigg((\varrho_{\delta})^\gamma+\frac{1}{2}(b_{\delta})^2\bigg)-\bigg((\mathcal{R}_{\varrho}\mathsf{d}_\delta)^\gamma+\frac{1}{2}(\mathcal{R}_{b}\mathsf{d}_\delta)^2\bigg)\bigg]\\
&:=\sum_{i=1}^3\mathcal{K}_{i}.
  \end{align*}
Based on $\eqref{partial-bound}$, it has $(\mathcal{R}_{\varrho}\mathsf{d}_\delta)^\gamma+\frac12(\mathcal{R}_{d}\mathsf{d}_\delta)^2\in L^{\theta_{min}}(\widetilde{Q_T^w})$ for $\theta_{min}=\min\{\frac{\gamma+\theta_1}{\gamma},\frac{2+\theta_2}{2}\}$. 
Moreover, by virtue of $\eqref{uniform-est}$, and the continuity of $z\mapsto T_k(z)$, letting $\delta\rightarrow 0$, we obtain 
 \begin{align*}
\mathcal{K}_2\rightarrow 0.
  \end{align*}
The mean value theorem gives 
\begin{align*}
&\bigg[\bigg(({\varrho_{\delta}})^\gamma+\frac{1}{2}(b_{\delta})^2\bigg)-\bigg((\mathcal{R}_{\varrho}\mathsf{d}_\delta)^\gamma+\frac{1}{2}(\mathcal{R}_{b}\mathsf{d}_\delta)^2\bigg)\bigg]
\\
&\leq C(\varrho_{\delta}+\mathcal{R}_{\varrho}\mathsf{d}_{\delta})^{\gamma-1}
|{\varrho_{\delta}}-\mathcal{R}_{\varrho}\mathsf{d}_{\delta}|
+C({b_{\delta}}+\mathcal{R}_{b}\mathsf{d}_{\delta})|{b_{\delta}} -\mathcal{R}_{b}\mathsf{d}_{\delta}|.
  \end{align*}
Therefore, using $\eqref{uniform-est}$, $\eqref{partial-bound}$, and Lemma $\ref{high-int-press}$, we derive that 
\begin{align*}
|\mathcal{K}_3|
&\leq 
C\lim_{\delta \to 0}\int_{\widetilde{Q^w_T}} \Big[(\varrho_{\delta}+\mathcal{R}_{\varrho}\mathsf{d}_{\delta})^{\gamma-1}
|{\varrho_{\delta}}-\mathcal{R}_{\varrho}\mathsf{d}_{\delta}|
+C({b_{\delta}}+\mathcal{R}_{b}\mathsf{d}_{\delta})|{b_{\delta}} -\mathcal{R}_{b}\mathsf{d}_{\delta}| \Big] =0 .
\end{align*}

Thus, we get
\begin{align}\label{inequality-press}
&\lim_{\delta\rightarrow 0}\int_{\widetilde{Q_T^w}}\Phi T_k(\varrho_\delta)\bigg((\varrho_{\delta})^\gamma+\frac{1}{2}(b_{\delta})^2\bigg)=\int_{\widetilde{Q_T^w}}\Phi \overline{\overline{T_k(\mathcal{R}_{\varrho}\mathsf{d})\bigg((\mathcal{R}_{\varrho}\mathsf{d})^\gamma+\frac{1}{2}(\mathcal{R}_{b}\mathsf{d})^2\bigg)}}\notag\\
&\geq \int_{\widetilde{Q_T^w}}\Phi \overline{\overline{T_k(\mathcal{R}_{\varrho}\mathsf{d})}}\overline{\overline{\bigg((\mathcal{R}_{\varrho}\mathsf{d})^\gamma+\frac{1}{2}(\mathcal{R}_{b}\mathsf{d})^2\bigg)}}, 
  \end{align}
where we have used the facts that $z\mapsto T_k(\mathcal{R}_{\varrho}z)$ and $z\mapsto (\mathcal{R}_{\varrho}z)^\gamma+\frac{1}{2}(\mathcal{R}_{b}z)^2$ are non-decreasing functions, and the notation $\overline{\overline{g(\cdot)}}$ to represent the weak limit of $g(\cdot)$ with respect to  $\delta \rightarrow 0$.

In view of $\eqref{uniform-est}$, the continuous function $z\mapsto T_k(z)$, and $\overline{\overline{(\mathcal{R}_{\varrho}\mathsf{d})^\gamma+\frac{1}{2}(\mathcal{R}_{d}\mathsf{d})^2}}\in L^{\theta_{min}}(\widetilde{Q_T^w})$ for $\theta_{min}=\min\{\frac{\gamma+\theta_1}{\gamma},\frac{2+\theta_2}{2}\}$, we easily derive that 
\begin{align*}
\lim_{\delta\rightarrow 0}\int_{\widetilde{Q_T^w}}\Phi \bigg(T_k(\mathcal{R}_\varrho\mathsf{d}_{\delta})-T_k(\varrho_{\delta})\bigg)\overline{\overline{\bigg((\mathcal{R}_{\varrho}\mathsf{d})^\gamma+\frac{1}{2}(\mathcal{R}_{b}\mathsf{d})^2\bigg)}} = 0.
  \end{align*}

Then it has 
\begin{align*}
&\int_{\widetilde{Q_T^w}}\Phi \overline{\overline{T_k(\mathcal{R}_{\varrho}\mathsf{d})}}\overline{\overline{\bigg((\mathcal{R}_{\varrho}\mathsf{d})^\gamma+\frac{1}{2}(\mathcal{R}_{b}\mathsf{d})^2\bigg)}}\\
&=\lim_{\delta\rightarrow 0}\int_{\widetilde{Q_T^w}}\Phi T_k(\varrho_{\delta})\overline{\overline{\bigg((\mathcal{R}_{\varrho}\mathsf{d})^\gamma+\frac{1}{2}(\mathcal{R}_{b}\mathsf{d})^2\bigg)}}\\
&\hspace{0.5cm}+\lim_{\delta\rightarrow 0}\int_{\widetilde{Q_T^w}}\Phi \bigg(T_k(\mathcal{R}_\varrho\mathsf{d}_{\delta})-T_k(\varrho_{\delta})\bigg)\overline{\overline{\bigg((\mathcal{R}_{\varrho}\mathsf{d})^\gamma+\frac{1}{2}(\mathcal{R}_{b}\mathsf{d})^2\bigg)}}\\
&=\int_{\widetilde{Q_T^w}}\Phi\overline{T_k(\varrho)}\overline{\overline{\bigg((\mathcal{R}_{\varrho}\mathsf{d})^\gamma+\frac{1}{2}(\mathcal{R}_{b}\mathsf{d})^2\bigg)}}.
  \end{align*}

Moreover, it holds 
\begin{align*}
&\int_{\widetilde{Q_T^w}}\Phi\overline{T_k(\varrho)}\overline{\overline{\bigg((\mathcal{R}_{\varrho}\mathsf{d})^\gamma+\frac{1}{2}(\mathcal{R}_{b}\mathsf{d})^2\bigg)}}\\
&=\lim_{\delta\rightarrow 0}\int_{\widetilde{Q_T^w}}\Phi\overline{T_k(\varrho)}\bigg((\varrho_{\delta})^\gamma+\frac{1}{2}(b_{\delta})^2\bigg)\\
&\hspace{0.5cm}+\lim_{\delta\rightarrow 0}\int_{\widetilde{Q_T^w}}\Phi\overline{T_k(\varrho)}\bigg[\bigg((\mathcal{R}_{\varrho}\mathsf{d}_{\delta})^\gamma+\frac{1}{2}(\mathcal{R}_{b}\mathsf{d}_{\delta})^2 \bigg)-\bigg((\varrho_{\delta})^\gamma+\frac{1}{2}(b_{\delta})^2\bigg)\bigg]\\
&=\int_{\widetilde{Q_T^w}}\Phi\overline{T_k(\varrho)}\overline{\bigg(\varrho^\gamma+\frac{1}{2}b^2\bigg)}.
  \end{align*}
  
According to the above estimates, we have 
\begin{align}\label{equality-press}
&\int_{\widetilde{Q_T^w}}\Phi \overline{\overline{T_k(\mathcal{R}_{\varrho}\mathsf{d})}}\overline{\overline{\bigg((\mathcal{R}_{\varrho}\mathsf{d})^\gamma+\frac{1}{2}(\mathcal{R}_{b}\mathsf{d})^2\bigg)}}=\int_{\widetilde{Q_T^w}}\Phi\overline{T_k(\varrho)}\overline{\bigg(\varrho^\gamma+\frac{1}{2}b^2\bigg)}.
  \end{align} 

Then, we put $\eqref{equality-press}$ and $\eqref{inequality-press}$ together to obtain 
\begin{align}\label{inequality-press-1}
&\lim_{\delta\rightarrow 0}\int_{\widetilde{Q_T^w}}\Phi T_k(\varrho_\delta)\bigg((\varrho_{\delta})^\gamma+\frac{1}{2}(b_{\delta})^2\bigg)\geq 
\int_{\widetilde{Q_T^w}}\Phi\overline{T_k(\varrho)}\overline{\bigg(\varrho^\gamma+\frac{1}{2}b^2\bigg)}.
  \end{align}

Due to the boundedness of $T_k(\varrho_{\delta})\bigg((\varrho_{\delta})^\gamma+\frac{1}{2}(b_{\delta})^2\bigg)\in L^{\theta_{min}}(Q_T^w)$ for $\theta_{min}=\min\{\frac{\gamma+\theta_1}{\gamma},\frac{2+\theta_2}{2}\}$, it has 
\begin{align}\label{inequality-press-2}
\lim_{\delta\rightarrow 0}\int_{Q_T^w\setminus\widetilde{Q_T^w}}\Phi T_k(\varrho_\delta)\bigg((\varrho_{\delta})^\gamma+\frac{1}{2}(b_{\delta})^2\bigg)=\int_{Q_T^w\setminus\widetilde{Q_T^w}}\Phi \overline{T_k(\varrho)\bigg(\varrho^\gamma+\frac{1}{2}b^2\bigg)}.
   \end{align}

Inserting $\eqref{inequality-press-1}$ and $\eqref{inequality-press-2}$ into $\eqref{varrho-pressure}$, we obtain
\begin{align*}
\int_{Q_T^w}\Phi \overline{T_k(\varrho)\bigg(\varrho^\gamma+\frac{1}{2}b^2\bigg)}&\geq \int_{\widetilde{Q_T^w}}\Phi \overline{T_k(\varrho)}\overline{\bigg(\varrho^\gamma+\frac{1}{2}b^2\bigg)}+\int_{Q_T^w\setminus\widetilde{Q_T^w}}\Phi \overline{T_k(\varrho)\bigg(\varrho^\gamma+\frac{1}{2}b^2\bigg)}\\
&\geq \int_{{Q_T^w}}\Phi \overline{T_k(\varrho)}\overline{\bigg(\varrho^\gamma+\frac{1}{2}b^2\bigg)}-\int_{Q_T^w\setminus\widetilde{Q_T^w}}\Phi \overline{T_k(\varrho)}\overline{\bigg(\varrho^\gamma+\frac{1}{2}b^2\bigg)}\\
&\hspace{0.5cm}+\int_{Q_T^w\setminus\widetilde{Q_T^w}}\Phi \overline{T_k(\varrho)\bigg(\varrho^\gamma+\frac{1}{2}b^2\bigg)}.
  \end{align*}

Recalling that $|Q_T^w\setminus\widetilde{Q_T^w}|\leq \sigma$, and letting $\sigma\rightarrow 0$, thus the last two terms on the right hand side of the above estimate go to zero. Eventually, we get 
\begin{align*}
\int_{Q_T^w}\Phi \overline{T_k(\varrho)\bigg(\varrho^\gamma+\frac{1}{2}b^2\bigg)}\geq 
\int_{{Q_T^w}}\Phi \overline{T_k(\varrho)}\overline{\bigg(\varrho^\gamma+\frac{1}{2}b^2\bigg)}.
\end{align*}

This completes the proof of $\eqref{varrho-press}$. The proof of $\eqref{b-press}$ can be derived in a similar way, and we omit it for brevity.
\end{proof}

Similarly to the previous discussion, we also construct the weak compactness identity for effective pressure $p_M(\varrho, \vartheta, b)-(\mu(\vartheta)+\eta(\vartheta))\mathrm{div}\bu$.
\begin{lemma}\label{Lemma-weak-compact-effective-id}
  For any $\psi\in C_c^\infty(Q_T^{w_{\delta}})$, it holds that 
  \begin{align}\label{Delta-t-EVF-3}
&\lim_{\delta\to 0}\int_{Q_T^{w_{\delta}}}(p_M(\varrho_{\delta}, \vartheta_{\delta}, b_{\delta})-(\mu(\vartheta_{\delta})+\eta(\vartheta_{\delta}))\mathrm{div}\bu_{\delta})(T_k(\varrho_{\delta})+T_k(b_{\delta}))\psi\notag \\
&=\int_{Q_T^{w_{\delta}}}(\overline{p_M(\varrho_, \vartheta, b)}-\overline{(\mu(\vartheta)+\eta(\vartheta))\mathrm{div}\bu})(\overline{T_k(\varrho)}+\overline{T_k(b)})\psi. 
\end{align}
\end{lemma}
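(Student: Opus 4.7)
The plan is to adapt the classical effective viscous flux argument of P.-L. Lions and Feireisl (see for instance \cite[Section 3.7.4]{FN-09}, \cite[Lemma 4.1]{FNP-01}) to the present magneto-thermo setting. Since $\psi \in C_c^\infty(Q_T^{w_\delta})$, and in view of the uniform convergence \eqref{delta-w-bound}, the support of $\psi$ is contained in $Q_T^w$ for all sufficiently small $\delta$; I may therefore regard the problem as posed on a fixed ball $\mathsf{B}$ and localize the argument away from the interface. In particular, the renormalized continuity equation for $(\varrho_\delta, \bu_\delta)$ and the renormalized magnetic equation for $(b_\delta,\bu_\delta)$ both hold in the sense used in earlier sections of the paper, and the limits $\varrho$, $b$ together with the test vector field $\bu$ satisfy the analogous renormalized identities.

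The main step will be to substitute into the momentum equation \eqref{mom-equ-delta} the test function
\begin{equation*}
\bmphi_{k,\delta}(t,x) := \psi(t,x)\,\mathcal{A}\bigl[\mathbf{1}_{\Omega_{w_\delta}(t)}\, T_k(\varrho_\delta)\bigr],\qquad \mathcal{A} := \nabla \Delta^{-1},
\end{equation*}
where $\Delta^{-1}$ denotes the inverse Laplacian on a fixed ball containing every $\Omega_{w_\delta}(t)$; boundedness of $T_k$ renders $\bmphi_{k,\delta}$ admissible. The time derivative $\partial_t \mathcal{A}[T_k(\varrho_\delta)]$ will be handled through the renormalized continuity equation, giving
\begin{equation*}
\partial_t T_k(\varrho_\delta) + \mathrm{div}\bigl(T_k(\varrho_\delta)\bu_\delta\bigr) + \bigl(T_k'(\varrho_\delta)\varrho_\delta - T_k(\varrho_\delta)\bigr)\mathrm{div}\bu_\delta = 0.
\end{equation*}
One inserts the parallel test function $\bmphi_k = \psi\,\mathcal{A}[\mathbf{1}_{\Omega_{w}(t)}\overline{T_k(\varrho)}]$ into the momentum equation satisfied by the limit and uses the analogous renormalized identity for $\overline{T_k(\varrho)}$. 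After subtracting, the pressure and viscous contributions regroup into the two sides of \eqref{Delta-t-EVF-3} (for the $T_k(\varrho_\delta)$ piece), the gradient-type terms originating from $\delta(\varrho_\delta+b_\delta)^\beta$ vanish in the limit thanks to \eqref{high-int-varrho-b-delta}, and the convective contribution reorganizes into a Riesz-transform commutator of the form
\begin{equation*}
\int_0^T\!\!\int \psi\,\Bigl(\varrho_\delta\bu_\delta \cdot \mathcal{R}[T_k(\varrho_\delta)\bu_\delta] - T_k(\varrho_\delta)\bu_\delta\cdot\mathcal{R}[\varrho_\delta\bu_\delta]\Bigr),\qquad \mathcal{R}=\nabla\otimes\nabla\Delta^{-1}.
\end{equation*}

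The genuine obstacle is the passage to the limit in this commutator, where the weak limits of the two tensor products must be identified. This is handled by the Feireisl commutator lemma (see \cite[Chapter 6]{FN-09}); together with the higher integrability $\varrho_\delta \in L^{\gamma+\theta_1}$ supplied by Lemma \ref{high-int-press} (which is exactly where the hypothesis $\gamma>5/3$ is used, cf. Remark \ref{Remark-gamma-condition}) and the uniform bounds on $\bu_\delta$, this yields enough compactness to conclude that the commutator converges to the analogous expression written with $\overline{T_k(\varrho)}$, thereby matching the two sides of \eqref{Delta-t-EVF-3}. The same scheme applied with $\mathcal{A}[T_k(b_\delta)]$ in place of $\mathcal{A}[T_k(\varrho_\delta)]$, together with the renormalized form of the magnetic equation and the higher integrability of $b_\delta$ from Lemma \ref{high-int-press}, provides the $T_k(b_\delta)$ half of \eqref{Delta-t-EVF-3}; summing the two completes the proof.
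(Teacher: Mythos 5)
Your proposal is correct and is essentially the proof the paper has in mind: the paper omits the argument, referring (via the analogous $\Delta t$-level lemma) to the standard Lions--Feireisl effective viscous flux machinery of \cite[Section 3.6.5]{FN-09}, which is precisely the scheme you describe — testing the momentum equation with $\psi\,\nabla\Delta^{-1}[T_k(\varrho_\delta)]$ (resp.\ $T_k(b_\delta)$), invoking the renormalized continuity and magnetic equations, identifying the Riesz-commutator limit, and using the higher integrability of Lemma \ref{high-int-press} together with the vanishing of the $\delta(\varrho_\delta+b_\delta)^\beta$ term. Your localization remark (that $\operatorname{supp}\psi\subset Q_T^{w_\delta}$ for small $\delta$ by \eqref{delta-w-bound}) correctly handles the only point specific to the moving-domain setting.
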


Seeing that $(\varrho_{\delta}, \bu_{\delta})$,  $(\varrho, \bu)$ and $(b_{\delta}, \bu_{\delta})$,  $(b, \bu)$ satisfy the assumptions in Lemma \ref{Renormalized-equ}, so they solve the corresponding renormalized equations. For any $t\in (0,T]$, we have 
\begin{align}\label{L-k-varrho-con}
\int_{\Omega_{w_{\delta}}(t)}L_k(\varrho_{\delta})(t,\cdot)-\int_{\Omega_{w}(0)}L_k(\varrho_{0, \delta})(\cdot)=\int_0^t\int_{\Omega_{w_{\delta}}(s)}T_k(\varrho_{\delta})\mathrm{div}\bu_{\delta}.
\end{align}

As for magnetic field $b_{\delta}$, we also have  
\begin{align}\label{L-k-b-con}
\int_{\Omega_{w_{\delta}}(t)}L_k(b_{\delta})(t,\cdot)-\int_{\Omega_{w}(0)}L_k(b_{0, \delta})(\cdot)=\int_0^t\int_{\Omega_{w_{\delta}}(s)}T_k(b_{\delta})\mathrm{div}\bu_{\delta}.
\end{align}

\begin{align}\label{L-con-k}
\int_{\Omega_{w}(t)}L_k(\varrho)(t,\cdot)-\int_{\Omega_{w}(0)}L_k(\varrho_{0})(\cdot)=\int_0^t\int_{\Omega_{w}(s)}T_k(\varrho)\mathrm{div}\bu. 
\end{align}

Also, it holds 
\begin{align}\label{L-con-k-b}
\int_{\Omega_{w}(t)}L_k(b)(t,\cdot)-\int_{\Omega_{w}(0)} L_k(b_{0})(\cdot)=\int_0^t\int_{\Omega_{w}(s)}T_k(b)\mathrm{div}\bu. 
\end{align}

Subtracting $\eqref{L-con-k}$ from $\eqref{L-k-varrho-con}$, subtracting $\eqref{L-con-k-b}$ from $\eqref{L-k-b-con}$, and adding them together, then letting $\delta \rightarrow 0$, we get 
\begin{align}\label{difference-T-k}
    &\int_{\Omega_{w}(t)}[\overline{L_k(\varrho)}- L_k(\varrho)+\overline{L_k(b)}- L_k(b)](t,\cdot)\notag\\
    &=\int_0^t\int_{\Omega_{w}(s)}[T_k(\varrho)+T_k(b)]\mathrm{div}\bu -\int_0^t\int_{\Omega_{w}(s)}\overline{[T_k(\varrho)+T_k(b)]\mathrm{div}\bu}. 
\end{align}

It follows from $\eqref{Delta-t-EVF}$ and the strong convergence of $\vartheta_\delta$ in $\eqref{convergence-vartheta-delta}$ that 
\begin{align}\label{T-k-varrho-b-div-u}
&-\int_0^t\int_{\Omega_{w}(s)}\overline{[T_k(\varrho)+T_k(b)]\mathrm{div}\bu}\notag\\
&=-\lim_{\delta\rightarrow 0}\int_0^t\int_{\Omega_{w_{\delta}}(s)}[T_k(\varrho_{\delta})+T_k(b_{\delta})]\frac{1}{\mu(\vartheta_{\delta})+\eta(\vartheta_{\delta})}\bigg((\mu(\vartheta_{\delta})+\eta(\vartheta_{\delta}))\mathrm{div}\bu_{\delta}-p_M(\varrho_{\delta}, \vartheta_{\delta}, b_{\delta})\bigg)\notag\\
&\hspace{0.5cm}-\lim_{\delta\rightarrow 0}\int_0^t\int_{\Omega_{w_{\delta}}(s)}[T_k(\varrho_{\delta})+T_k(b_{\delta})]\frac{1}{\mu(\vartheta_{\delta})+\eta(\vartheta_{\delta})}p_M(\varrho_{\delta}, \vartheta_{\delta}, b_{\delta})\notag\\
&=-\int_0^t\int_{\Omega_{w}(s)}[\overline{T_k(\varrho)}+\overline{T_k(b)}]\frac{1}{\mu(\vartheta)+\eta(\vartheta)}\bigg((\mu(\vartheta)+\eta(\vartheta))\mathrm{div}\bu-\overline{p_M(\varrho, \vartheta, b)}\bigg)\notag\\
&\hspace{0.5cm}-\lim_{\delta\rightarrow 0}\int_0^t\int_{\Omega_{w_{\delta}}(s)}[T_k(\varrho_{\delta})+T_k(b_{\delta})]\frac{1}{\mu(\vartheta_{\delta})+\eta(\vartheta_{\delta})}p_M(\varrho_{\delta}, \vartheta_{\delta}, b_{\delta}).
\end{align}

Now, relations $\eqref{difference-T-k}$ and $\eqref{T-k-varrho-b-div-u}$ give rise to 
\begin{align}\label{difference-T-k-L}
    &\int_{\Omega_{w}(t)}[\overline{L_k(\varrho)}- L_k(\varrho)+\overline{L_k(b)}- L_k(b)](t,\cdot)\notag\\
    &=\int_0^t\int_{\Omega_{w}(s)}[\overline{T_k(\varrho)}+\overline{T_k(b)}]\frac{1}{\mu(\vartheta)+\eta(\vartheta)}\overline{p_M(\varrho, \vartheta, b)}\notag\\
    &\hspace{0.5cm}-\lim_{\delta\rightarrow 0}\int_0^t\int_{\Omega_{w_{\delta}}(s)}[T_k(\varrho_{\delta})+T_k(b_{\delta})]\frac{1}{\mu(\vartheta_{\delta})+\eta(\vartheta_{\delta})}p_M(\varrho_{\delta}, \vartheta_{\delta}, b_{\delta})\notag\\
    &\hspace{0.5cm}+\int_0^t\int_{\Omega_{w}(s)}[T_k(\varrho)-\overline{T_k(\varrho)}+T_k(b)-\overline{T_k(b)}]\mathrm{div}\bu, 
\end{align}
 together with $\eqref{varrho-press}$, $\eqref{b-press}$, and the strong convergence of $\vartheta_{\delta}$, which give 
\begin{align}\label{difference-T-k-L-2}
    &\int_{\Omega_{w}(t)}[\overline{L_k(\varrho)}- L_k(\varrho)+\overline{L_k(b)}- L_k(b)](t,\cdot)\notag\\
    &
\leq \int_0^t\int_{\Omega_{w}(s)}[T_k(\varrho)-\overline{T_k(\varrho)}+T_k(b)-\overline{T_k(b)}]\mathrm{div}\bu.
\end{align}

Because of $\eqref{T-k-z}$ and $\eqref{T-z}$, it has 
\begin{align*}
\|T_k(\varrho)-\overline{T_k(\varrho)}\|_{L^2(Q_T^w)}&\leq \liminf_{\delta\rightarrow 0}\|T_k(\varrho)-T_k(\varrho_{\delta})\|_{L^2(Q_T^{w_\delta})}\\
&\leq \liminf_{\delta\rightarrow 0}\|\varrho+\varrho_{\delta}\|_{L^{\gamma+\theta_1}(Q_T^{w_\delta})}\leq C,
\end{align*}
where we have used $\eqref{high-int-varrho-b-delta}$.

Then, direct calculations give that 
\begin{align}\label{T-k-varrho-div-u-estimate}
&\bigg|\int_0^t\int_{\Omega_{w}(s)}[T_k(\varrho)-\overline{T_k(\varrho)}]\mathrm{div}\bu\bigg|\notag\\
&\leq \|T_k(\varrho)-\overline{T_k(\varrho)}\|_{L^2(Q_T^w)}\|\mathrm{div}\bu\|_{L^2(Q_T^w\cap\{\varrho\geq k\})}\notag\\
&\hspace{0.5cm}+\|T_k(\varrho)-\overline{T_k(\varrho)}\|_{L^2(Q_T^w\cap\{\varrho\leq k\})}\|\mathrm{div}\bu\|_{L^2(Q_T^w)}\notag\\
&\leq C\|\mathrm{div}\bu\|_{L^2(Q_T^w\cap\{\varrho\geq k\})}+C\|T_k(\varrho)-\overline{T_k(\varrho)}\|_{L^2(Q_T^w\cap\{\varrho\leq k\})}.
\end{align}
Now, due to $\eqref{high-int-varrho-b-delta}$, the Lebesgue measure of $Q_T^w\cap\{\varrho\geq k\}$ goes to zero as $k \rightarrow \infty$. 
Moreover, we have
\begin{align}
&\|T_k(\varrho)-\overline{T_k(\varrho)}\|_{L^2(Q_T^w\cap\{\varrho\leq k\})}=\|\varrho-\overline{T_k(\varrho)}\|_{L^2(Q_T^w\cap\{\varrho\leq k\})}  \notag \\
&\leq \liminf_{\delta\rightarrow 0}\|\varrho_{\delta}-T_k(\varrho_{\delta})\|_{L^2(Q_T^{w_\delta})}=\liminf_{\delta\rightarrow 0}\|\varrho_{\delta}-T_k(\varrho_{\delta})\|_{L^2(Q_T^{w_\delta}\cap\{\varrho_{\delta}>k\})} \notag \\
&\leq 2\liminf_{\delta\rightarrow 0}\|\varrho_{\delta}\|_{L^2(Q_T^{w_\delta}\cap\{\varrho_{\delta}>k\})}\leq 2k^{1-\frac{\gamma+\theta_1}{2}}\liminf_{\delta\rightarrow 0}\|\varrho_{\delta}\|_{L^{\gamma+\theta_1}(Q_T^{w_\delta})}^{\gamma+\theta_1}
\notag \\
&\rightarrow 0,\quad  \mathrm{as} \hspace{0.2cm}k\rightarrow   \infty, \label{auxiliary-T_k-T-rho}
\end{align}
where we have used the fact that $\gamma+\theta_1>2$, and this is the place where it is crucial to assume $\gamma>\frac{5}{3}$ (see Remark \ref{Remark-gamma-condition}).

Thus, letting $k\rightarrow \infty$ in $\eqref{T-k-varrho-div-u-estimate}$, we obtain 
\begin{align}\label{T-k-varrho-div-u}
\lim_{k\rightarrow \infty}\int_0^t\int_{\Omega_{w}(s)}[T_k(\varrho)-\overline{T_k(\varrho)}]\mathrm{div}\bu=0.
\end{align}

Similarly, one has
\begin{align}\label{T-k-b-div-u}
\lim_{k\rightarrow \infty}\int_0^t\int_{\Omega_{w}(s)}[T_k(b)-\overline{T_k(b)}]\mathrm{div}\bu=0.
\end{align}

To the next, by plugging $\eqref{T-k-varrho-div-u}$ and $\eqref{T-k-b-div-u}$ into $\eqref{difference-T-k-L}$, we get 
\begin{align}\label{negative-L-varrho-b}
\lim_{k\rightarrow \infty}\int_{\Omega_{w}(t)}[\overline{L_k(\varrho)}- L_k(\varrho)+\overline{L_k(b)}- L_k(b)](t,\cdot)\leq 0.
\end{align}

Now, according to the definition, we know that 
\begin{align}
&\lim_{k\rightarrow \infty}\bigg(\|L_k(\varrho)-\varrho\log \varrho\|_{L^1(\Omega_{w}(t))}+\|L_k(b)-b\log b\|_{L^1(\Omega_{w}(t))}\bigg)=0, \ \text{ and }\label{blogb-equal-1}\\
&\lim_{k\rightarrow \infty}\bigg(\|\overline{L_k(\varrho)}-\overline{\varrho\log \varrho}\|_{L^1(\Omega_{w}(t))}+\|\overline{L_k(b)}-\overline{b\log b}\|_{L^1(\Omega_{w}(t))}\bigg)=0\label{blogb-equal-2}.
\end{align}
Then, using 
 the convexity of $z\mapsto z\log z$, it holds that  
\begin{align*}
0\leq \int_{\Omega_{w}(t)}\bigg(\overline{\varrho\log \varrho}-\varrho\log\varrho+\overline{b\log b}-b\log b\bigg)\leq 0,
\end{align*}
where we have used $\eqref{negative-L-varrho-b}$, $\eqref{blogb-equal-1}$, and $\eqref{blogb-equal-2}$. 

So, finally we obtain
\begin{align*} 
\varrho_{\delta}\to\varrho  \ \text{ and } \ b_{\delta}\to b, \hspace{0.2cm} \mathrm{a.e.}\hspace{0.1cm}\mathrm{in}\hspace{0.1cm} (0,T)\times\Omega_w(t).
\end{align*}

\subsection{Maximal interval of existence} 
It follows from $\eqref{T-max}$ that a minimal time could be determined. Then, we prolong the time until degeneracy occurs. For more details, interested readers are referred to \cite{CDEG-05, LR-14}.

\section{Concluding remarks}\label{Sec-Conclusion}

In this paper, we study a fluid-structure interaction system in a two-dimensional framework. A crucial step involves extending the initial time-varying domain $\Omega_w(t)$ to a larger, fixed domain $\mathsf{B}$. 
By establishing the existence of a weak solution within the fixed domain $\mathsf{B}$, and taking advantage of the    facts that $\varrho|_{\mathsf{B}\setminus \Omega_w(t)}=b|_{\mathsf{B}\setminus \Omega_w(t)}=0$ (starting with initial conditions $\varrho_0|_{\mathsf{B}\setminus \Omega_w(0)}=b_0|_{\mathsf{B}\setminus \Omega_w(0)}=0$), we successfully demonstrate the existence of a weak solution to the coupled system in $Q^w_T$. 
Actually in our case, the magnetic field acts solely in the vertical direction on the fluid, and thus the induction equation in  $\mathbb{R}^3$ (see \eqref{mag-eq})
\begin{align}\label{ind-three-dim-mag}
\mathbf B_t-\textbf{curl} (\bu \times \mathbf B) + \textbf{curl} (\nu \textbf{curl}\, \mathbf B) =0, \ \ \rmdiv \mathbf{B}=0 ,  
\end{align}
simplifies to
\begin{align*}
b_t+\mathrm{div}(b\bu)=0,
\end{align*}
in our two-dimensional setting, which basically represents a transport equation.   
But for the general situation  as \eqref{ind-three-dim-mag}, it is  not straightforward to prove $\mathbf{B}|_{\mathsf{B}\setminus \Omega_w(t)}=0$.  So, we decided to   consider this issue   in a future research work, in the context of MHD-structure interaction problem in $3$D-$2$D setting.

\appendix 

\section{Some auxiliary results}\label{Sec-Appendix}

We state generalized Korn-Poincar\'{e}  inequality on a domain with a Lipschitz boundary, which is due to the result in \cite[Theorem 11.20]{FN-09}.

\begin{lemma}[Generalized Korn-Poincar\'{e} inequality]\label{Korn-inequality}
Let $w$ be the displacement of the boundary satisfying $w\in H^2({\Gamma})$ and $\alpha_{\partial\Omega}<w<\beta_{\partial\Omega}$, and the domain $\Omega_w(t)$ is defined through the displacement $w$ as in $\eqref{moving-domain}$. Moreover, suppose that $M$, $L>0$, $\gamma >1$. Then there exists a positive constant $C$ only depending on $q$, $M$, $L$, $\gamma$, $\|w\|_{H^2(\Gamma)}$ such that 
\begin{align*} 
\|\bu\|^2_{ W^{1,q}(\Omega_w(t))}\leq C\left(\|\nabla\bu+\nabla^\top\bu-\rmdiv \bu \mathbb I_2\|^2_{L^q(\Omega_w(t))}+\int_{\Omega_w(t)}\varrho|\bu|\right), \ \ 1< q< \infty, 
\end{align*}
for any $\varrho$ and $\bu$ such that the right-hand side is finite and 
\begin{align*}
\varrho\geq 0\hspace{0.15cm} \mathrm{a.e.}\hspace{0.15cm} \mathrm{on} \hspace{0.15cm} \Omega_w(t), \hspace{0.3cm}\|\varrho\|_{L^\gamma(\Omega_w(t))}\leq L, \hspace{0.3cm}\int_{\Omega_w(t)}\varrho\geq M. 
\end{align*}
\end{lemma}


The renormalized weak solutions (introduced by DiPerna and Lions \cite{Lions-Diperna}) to the continuity equation $\eqref{weak-con}$ and the no-resistive magnetic equation $\eqref{mag-equ}$ in the time varying domain still holds true. To complete the proof, we refer the procedures outlined in \cite[Lemma 2.7]{KMN-24} and \cite[Lemma 2.5]{VWY-19}.

\begin{lemma}\label{Renormalized-equ}
Assume that the density $\varrho\in {L^\infty(0,T;L^\gamma(\Omega_w(t)))\cap L^2(0,T;L^2(\Omega_w(t)))}$, the magnetic field $b\in L^\infty(0,T;L^2(\Omega_w(t)))\cap {L^2(0,T;L^2(\Omega_w(t)))}$, and the velocity $\bu\in L^2(0,T;W^{1,2}(\mathsf{B}))$, where the domain $\mathsf{B}$ is given in Section \ref{Section-fixed-domain}. 
Moreover, they satisfy that 
\begin{align*}
  &\int_0^T\int_{\Omega_w(t)}\varrho(\phi_t+\bu\cdot\nabla\phi)=0,\\
    &\int_0^T\int_{\Omega_w(t)}b(\phi_t+\bu\cdot\nabla\phi)=0,
\end{align*}
for any $\phi\in C_c^\infty([0,T]\times\mathbb{R}^2)$. Then we derive 
\begin{align*}
    &\int_0^t\int_{\Omega_w(s)}  \mathcal{F}(\mathbf{r})(\phi_t+\bu\cdot\nabla\phi)-[\nabla_{\mathbf{r}}\mathcal{F}(\mathbf{r})\cdot\mathbf{r}-\mathcal{F}(\mathbf{r})]\mathrm{div}\bu\phi=\bigg(\int_{\Omega_w(s)}\mathcal{F}(\mathbf{r})\phi\bigg)\bigg|_{s=0}^{s=t}
\end{align*}
where $\mathcal{F}(\cdot):C^1([0,\infty)^2)\to \mathbb{R}$, $\nabla \mathcal{F}(\cdot)\in L^\infty(0,\infty)$, $\mathcal{F}(0)=0$ the function $\mathcal{F}(\mathbf{r})$, where $\mathbf{r}:=(\varrho, b)$, and for all $t\in (0,T]$, for all $\phi\in C^\infty([0,T]\times\mathbb{R}^2)$.

\end{lemma}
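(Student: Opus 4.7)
\section*{Proof proposal for Lemma \ref{Renormalized-equ}}

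The plan is to follow the DiPerna--Lions strategy \cite{Lions-Diperna}, adapted to the time-varying domain via a zero extension, and to renormalize the two transport equations for $\varrho$ and $b$ jointly rather than separately.

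First I would pass to a fixed ambient space. Since $\varrho_0$ and $b_0$ vanish outside $\Omega_w(0)$, and since the kinematic condition $\bu|_{\Gamma_w(t)}=w_t\bfn$ means the fluid does not cross the moving interface, the zero extensions $\tilde\varrho,\tilde b$ of $\varrho,b$ to $(0,T)\times\mathbb R^2$ inherit the regularity $\tilde\varrho\in L^\infty(0,T;L^\gamma)\cap L^2(0,T;L^2)$, $\tilde b\in L^\infty(0,T;L^2)\cap L^2(0,T;L^2)$, and solve
\begin{equation*}
\partial_t\tilde\varrho+\rmdiv(\tilde\varrho\bu)=0, \qquad \partial_t\tilde b+\rmdiv(\tilde b\bu)=0
\end{equation*}
in $\mathcal D'((0,T)\times\mathbb R^2)$, with $\bu$ extended to $\mathsf{B}$ as in the hypothesis (and then by zero to $\mathbb R^2$). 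A short verification using test functions supported across $\Gamma_w(t)$, combined with the kinematic coupling, gives that no surface measure appears on the moving boundary.

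Next I would convolve in the spatial variable with a standard mollifier $\eta_\varepsilon$ to obtain $\tilde\varrho_\varepsilon:=\tilde\varrho*\eta_\varepsilon$, $\tilde b_\varepsilon:=\tilde b*\eta_\varepsilon$, which satisfy
\begin{equation*}
\partial_t\tilde\varrho_\varepsilon+\rmdiv(\tilde\varrho_\varepsilon\bu)=\mathcal R_\varepsilon^\varrho,\qquad
\partial_t\tilde b_\varepsilon+\rmdiv(\tilde b_\varepsilon\bu)=\mathcal R_\varepsilon^b,
\end{equation*}
where $\mathcal R_\varepsilon^\varrho=\rmdiv(\tilde\varrho_\varepsilon\bu)-\rmdiv((\tilde\varrho\bu)*\eta_\varepsilon)$ and similarly for $\mathcal R_\varepsilon^b$. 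Because $\tilde\varrho,\tilde b\in L^2_{t,x}$ and $\bu\in L^2_tW^{1,2}_x$, the DiPerna--Lions commutator lemma yields $\mathcal R_\varepsilon^\varrho,\mathcal R_\varepsilon^b\to 0$ in $L^1_{\mathrm{loc}}((0,T)\times\mathbb R^2)$. It is precisely this step that requires the hypothesis $\varrho\in L^2(0,T;L^2(\Omega_w(t)))$ (secured earlier by the integrability gain of Lemma \ref{high-int-press} for $\gamma>5/3$), and it is the key technical obstacle here.

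Since $\tilde\varrho_\varepsilon,\tilde b_\varepsilon$ are smooth, the pointwise chain rule for $\mathcal F\in C^1([0,\infty)^2)$ applied to $\mathbf r_\varepsilon:=(\tilde\varrho_\varepsilon,\tilde b_\varepsilon)$ gives
\begin{equation*}
\partial_t\mathcal F(\mathbf r_\varepsilon)+\rmdiv\bigl(\mathcal F(\mathbf r_\varepsilon)\bu\bigr)+\bigl[\nabla_\mathbf r\mathcal F(\mathbf r_\varepsilon)\cdot\mathbf r_\varepsilon-\mathcal F(\mathbf r_\varepsilon)\bigr]\rmdiv\bu=\partial_\varrho\mathcal F(\mathbf r_\varepsilon)\mathcal R_\varepsilon^\varrho+\partial_b\mathcal F(\mathbf r_\varepsilon)\mathcal R_\varepsilon^b.
\end{equation*}
Multiplying by $\phi\in C^\infty([0,T]\times\mathbb R^2)$ and integrating over $(0,t)\times\mathbb R^2$, I pass to the limit $\varepsilon\to 0$: $\mathbf r_\varepsilon\to\mathbf r=(\tilde\varrho,\tilde b)$ strongly in $L^2_{\mathrm{loc}}$, the boundedness of $\nabla\mathcal F$ together with $\mathcal F(0)=0$ gives the uniform $L^2$ bound $|\mathcal F(\mathbf r_\varepsilon)|\leq C(|\tilde\varrho_\varepsilon|+|\tilde b_\varepsilon|)$ needed to handle the flux term, and the commutator remainders vanish. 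Finally, because $\mathcal F(0,0)=0$, every integrand is supported in $\{(t,x):\tilde\varrho(t,x)+\tilde b(t,x)>0\}\subset Q_T^w$, so the integral over $\mathbb R^2$ collapses to the integral over $\Omega_w(s)$, yielding the claimed identity. The case of general $\phi\in C^\infty([0,T]\times\mathbb R^2)$ follows by a density argument, and a standard approximation extends $\mathcal F$ from $C^1$ with bounded gradient to the class stated.
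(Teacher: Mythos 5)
Your proposal is correct and follows essentially the same route the paper takes: the paper does not prove Lemma \ref{Renormalized-equ} itself but delegates it to the DiPerna--Lions procedure of \cite[Lemma 2.7]{KMN-24} and \cite[Lemma 2.5]{VWY-19}, which is exactly your argument (zero extension to the fixed ambient space, Friedrichs mollification with the commutator lemma --- this is where the $L^2_{t,x}$ integrability of $\varrho$ and $b$ together with $\bu\in L^2_tW^{1,2}_x$ enters --- joint renormalization of the pair $\mathbf r=(\varrho,b)$, and collapse of the integrals back to $\Omega_w(s)$ via $\mathcal F(0,0)=0$). The only cosmetic remark is that your preliminary verification that no surface measure appears on $\Gamma_w(t)$ is already encoded in the lemma's hypothesis, since the weak formulations are assumed to hold for all $\phi\in C_c^\infty([0,T]\times\mathbb R^2)$, not merely for test functions supported in the fluid domain.
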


The following lemma ensures that the density and the magnetic field vanish outside the domain $\Omega_w(t)$, given that they are zero outside $\Omega_w(0)$ at the initial time. The proof closely follows Lemma 4.4 in \cite{KMN-24}, and for brevity, we omit it here.
\begin{lemma}\label{extend-zero}
Suppose that $\varrho \in L^\infty(0,T;L^{3}(\mathsf{B}))$, $b \in L^\infty(0,T;L^{3}(\mathsf{B}))$, $\bu \in L^2(0,T;W^{1,2}_0(\mathsf{B}))$ satisfy the weak formulations $\eqref{weak-con-delta}$ and $\eqref{mag-equ-delta}$, and assume that the displacement {$w\in W^{1,\infty}(0,T;L^2(\Gamma))\cap W^{1,2}((0,T)\times\Gamma)\cap L^\infty(0,T;H^{2}(\Gamma))$} and $\bu(x+w(t,\bmphi^{-1}(x))\bfn(x))=w_t(t,\bmphi^{-1}(x))\bfn(x)$ in $(0,T)\times\partial\Omega$ in the sense of trace. Then we have 
\begin{align*}
\varrho|_{\mathsf{B}\setminus \Omega_w(t)}=b|_{\mathsf{B}\setminus \Omega_w(t)}=0, \hspace{0.3cm}\mathrm{a.e.}\hspace{0.1cm} t\in (0,T). 
\end{align*}
\end{lemma}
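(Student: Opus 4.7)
\noindent\textbf{Plan of proof for Lemma \ref{extend-zero}.} The strategy is to apply DiPerna--Lions renormalization to the continuity and non-resistive magnetic equations in the fixed domain $\mathsf{B}$, to localize with a time-dependent cutoff built from the signed distance to $\Gamma_w(t)$, and to exploit the kinematic coupling $\bu|_{\Gamma_w(t)} = w_t\bfn$ to show that the flux across the moving interface vanishes. Together with the hypothesis $\varrho_0|_{\mathsf{B}\setminus\Omega_w(0)} = b_0|_{\mathsf{B}\setminus\Omega_w(0)} = 0$ coming from \eqref{ini-app}, this will force $\varrho$ and $b$ to remain null outside $\Omega_w(t)$.

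\medskip

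\noindent\emph{Renormalization and moving cutoff.} Since $\varrho,b\in L^\infty(0,T; L^3(\mathsf{B}))$, $\bu\in L^2(0,T; W^{1,2}_0(\mathsf{B}))$, and in particular $\rmdiv\bu\in L^2((0,T)\times\mathsf{B})$, the DiPerna--Lions theory \cite{Lions-Diperna} applies in $\mathsf{B}$: for any $G\in C^1([0,\infty))$ with $G(0)=0$ and with $G'$ and $z\mapsto zG'(z)-G(z)$ bounded,
\begin{align*}
\partial_t G(\varrho) + \rmdiv\bigl(G(\varrho)\bu\bigr) + \bigl(\varrho G'(\varrho)-G(\varrho)\bigr)\rmdiv\bu = 0 \quad \text{in } \mathcal D'((0,T)\times\mathsf{B}),
\end{align*}
and likewise with $b$ in place of $\varrho$. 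Let $\widetilde d(t,x)$ be the signed distance to $\Gamma_w(t)$, positive outside $\Omega_w(t)$. By the hypothesis $w\in L^\infty(0,T; H^2(\Gamma))\cap W^{1,\infty}(0,T; L^2(\Gamma))$ and the one-dimensionality of $\Gamma$ (so that \eqref{embedding-Delta-t} gives $C^{1,1/10}$-regularity of $\Gamma_w(t)$ jointly in $(t,x)$), $\widetilde d$ is Lipschitz in $(t,x)$ on a uniform tubular neighbourhood of the interface. For $\epsilon>0$, pick a smooth non-decreasing $\psi_\epsilon:\mathbb R\to[0,1]$ with $\psi_\epsilon\equiv 0$ on $(-\infty,0]$ and $\psi_\epsilon\equiv 1$ on $[\epsilon,\infty)$, and set
\begin{align*}
\chi_\epsilon(t,x) := \psi_\epsilon\bigl(\widetilde d(t,x)\bigr),
\end{align*}
so $\chi_\epsilon\nearrow \mathbbm{1}_{\mathsf{B}\setminus\Omega_w(t)}$ as $\epsilon\to 0$.

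\medskip

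\noindent\emph{Testing and vanishing of the interface flux.} Testing the renormalized identity with $\chi_\epsilon\xi(t)$, $\xi \in C^\infty_c([0,T))$, one obtains after standard manipulations an identity whose critical term is the interface flux
\begin{align*}
\mathcal R_\epsilon := \int_0^T\!\!\int_{\mathsf{B}} G(\varrho)\,\psi_\epsilon'(\widetilde d)\bigl(\partial_t \widetilde d + \bu\cdot\nabla\widetilde d\bigr)\xi\,dx\,dt,
\end{align*}
supported in the shrinking shell $\{0<\widetilde d<\epsilon\}$, a set of two-dimensional Lebesgue measure $O(\epsilon)$. On $\Gamma_w(t)$ the kinematic coupling (interpreted as an $H^{1/2}$-trace identity on the Lipschitz interface) gives $\bu\cdot\nabla\widetilde d = w_t\,\bfn\cdot\bfn_w$, while the construction of $\widetilde d$ from $\bmphi_w=\bmphi+w\bfn$ yields $\partial_t\widetilde d\big|_{\Gamma_w(t)} = -w_t\,\bfn\cdot\bfn_w$. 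Hence the factor $(\partial_t\widetilde d+\bu\cdot\nabla\widetilde d)$ has vanishing trace on $\Gamma_w(t)$, and since $\psi_\epsilon'(\widetilde d)\,d\widetilde d$ concentrates on $\Gamma_w(t)$ as $\epsilon\to 0$, a standard shrinking-shell estimate yields $\mathcal R_\epsilon\to 0$.

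\medskip

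\noindent\emph{Conclusion and main obstacle.} Choosing $G$ as a smooth bounded approximation of $z\mapsto z^2$, the limit $\epsilon\to 0$ gives the inequality
\begin{align*}
\int_{\mathsf{B}\setminus\Omega_w(t)} G(\varrho)(t,\cdot) \leq \int_{\mathsf{B}\setminus\Omega_w(0)} G(\varrho_0) + C\int_0^t\!\!\int_{\mathsf{B}\setminus\Omega_w(s)} G(\varrho)\,|\rmdiv\bu(s)|\,dx\,ds.
\end{align*}
The first right-hand term vanishes by hypothesis, and a Gronwall argument (together with monotone passage $G\nearrow z^2$) forces $\varrho(t,\cdot) = 0$ a.e.\ in $\mathsf{B}\setminus\Omega_w(t)$. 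The argument for $b$ is identical. The principal obstacle is justifying $\mathcal R_\epsilon\to 0$: this step requires both the $H^{1/2}$-trace interpretation of the kinematic coupling and sharp Lipschitz control of $(t,x)\mapsto\widetilde d(t,x)$, which in turn rely on the $H^2(\Gamma)$-bound for $w$ and the one-dimensionality of the reference torus $\Gamma$---features highlighted and exploited throughout the paper.
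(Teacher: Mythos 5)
The paper does not reproduce a proof here (it delegates to \cite[Lemma 4.4]{KMN-24}), and your overall strategy is the right one and matches that reference: localize with a cutoff built from the signed distance to the moving interface, and kill the interface flux $\mathcal R_\epsilon$ using the kinematic coupling $\bu|_{\Gamma_w(t)}=w_t\bfn$ together with a shrinking-shell/trace estimate. Your identification of $\partial_t\widetilde d+\bu\cdot\nabla\widetilde d$ as the quantity with vanishing trace on $\Gamma_w(t)$ is exactly the key point. (One technical overstatement: $w\in W^{1,\infty}(0,T;L^2(\Gamma))\cap L^\infty(0,T;H^2(\Gamma))$ gives only $C^{0,1/5}_t C^{1,1/10}_y$ regularity of the interface via \eqref{embedding-Delta-t}, so $\widetilde d$ is not jointly Lipschitz; $\partial_t\widetilde d$ must be handled as an $L^\infty_t L^2$ object through the normal velocity and a mollification of the test function. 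This is repairable.)

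The genuine gap is in your concluding step. Renormalizing with $G$ approximating $z\mapsto z^2$ produces the commutator term $(\varrho G'(\varrho)-G(\varrho))\rmdiv\bu\approx\varrho^2\,\rmdiv\bu$, and your Gronwall inequality does not close: $\rmdiv\bu(s)$ lies only in $L^2(\mathsf B)$ in space, so $\int_{\mathsf B\setminus\Omega_w(s)}G(\varrho)\,|\rmdiv\bu|\,dx$ is controlled by $\|G(\varrho)\|_{L^2}\|\rmdiv\bu\|_{L^2}$, not by $g(s)\int G(\varrho)\,dx$ with $g\in L^1(0,T)$; even exploiting the boundedness of the truncation one only reaches $f(t)\le C\int_0^t g\,f^{1/2}$, which (as for $y'=y^{1/2}$) does not force $f\equiv0$. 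The standard and correct conclusion is simpler: since $\varrho\ge0$ and $b\ge0$ in the setting where the lemma is applied, test the continuity equation itself (i.e.\ take $G(z)=z$, for which $zG'(z)-G(z)=0$ and no commutator term appears) with $\chi_\epsilon\xi$; then $\int_{\mathsf B\setminus\Omega_w(t)}\varrho(t)\le\int_{\mathsf B\setminus\Omega_w(0)}\varrho_0+\lim_{\epsilon\to0}\mathcal R_\epsilon=0$, and nonnegativity finishes the proof. If you prefer not to invoke nonnegativity, take $G$ approximating $|z|$, for which $zG'(z)-G(z)\to0$ and the same flux argument applies without any Gronwall step.
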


Inspired by \cite[Lemma 2.1]{Wen-21} and \cite[Lemma 2.9]{KMN-24}, we employ similar procedures to establish the almost compactness property $\varrho$ and $b$ on the varying domain.
\begin{lemma}\label{strong-con}
Assume that the sequence $\{(w^i, \varrho^i, b^i, \bu^i)\}$ satisfies the following items:
  \begin{itemize}
  \item the assumption for  structure geometry given in Section \ref{Section-Geometry-ext} holds for each $w^i$;
\item $\varrho^i$ and $b^i$ are the solutions to continuity equation and no-resistive equation (respectively) in $Q^{w^i}_T$ prolonged by zero on $((0,T)\times\mathsf{B})\setminus Q^{w^i}_T$, and $\bu^i$ are the corresponding velocities in $(0,T)\times \mathsf{B}$;
\item the following estimates also hold:
\begin{align*}
&\|w^i\|_{L^\infty(0,T;W^{2,2}(\Gamma))}+\|(w^i)_t\|_{L^\infty(0,T;L^2(\Gamma))}+
\|\varrho^i\|_{L^\infty(0,T;L^\gamma(\Omega_{w^i}(t)))}\\
&+\|b^i\|_{L^\infty(0,T;L^2(\Omega_{w^i}(t)))}+\|\bu^i\|_{L^2(0,T;W^{1,2}(\mathsf{B}))} <  +\infty.
\end{align*}
\end{itemize}
Moreover, it holds that 
\begin{align*}
\lim_{i\rightarrow +\infty}\int_{\Omega_{w^i_0}}\frac{(\varrho^i_{0})^2}{\varrho^i_{0}+b^i_{0}}\leq \int_{\Omega_{w_0}}\frac{\varrho_0^2}{\varrho_0+b_0},
\end{align*}
and 
\begin{align*}
\lim_{i\rightarrow +\infty}\int_{\Omega_{w^i_0}}\frac{(b^i_{0})^2}{\varrho^i_{0}+b^i_{0}} \leq \int_{\Omega_{w_0}}\frac{b_0^2}{\varrho_0+b_0}.
\end{align*}

Then, up to a subsequence, we have  
\begin{align*}
&w^i \rightarrow w \hspace{0.5cm}\mathrm{in}\hspace{0.2cm} C^{\frac{1}{5}}([0,T];C^{1,\frac{1}{10}}(\Gamma)),\\
&\varrho^i\rightharpoonup \varrho \hspace{0.5cm}\mathrm{in}\hspace{0.2cm} C_{\mathrm{weak}}([0,T];L^{\gamma}(\mathsf{B})),\\
&b^i\rightharpoonup b \hspace{0.5cm}\mathrm{in}\hspace{0.2cm} C_{\mathrm{weak}}([0,T];L^2(\mathsf{B})),\\
&\bu^i \rightharpoonup \bu 
\hspace{0.5cm} \mathrm{weakly } \hspace{0.2cm}\mathrm{in} \hspace{0.2cm}L^2(0,T;W_0^{1,2}(\mathsf{B})). 
\end{align*}
In addition, $(\varrho, \bu)$ and $(b, \bu)$ solve the continuity equation and no-resistive magnetic equation in $(0,T)\times\mathsf{B}$, respectively, and 
\begin{align*}
\lim_{i\rightarrow +\infty}\int_0^T\int_{\mathsf{B}}(\varrho^i+b^i)|a^i-a|^p=0,
\end{align*}
for all $p\in[1,\infty)$ and $t\in[0,T]$, where $a^i=\frac{\varrho^i}{\varrho^i+b^i}$ if $\varrho^i+b^i\neq 0$ and $a=\frac{\varrho}{\varrho+b}$ if $\varrho+b\neq 0$, or $a^i=\frac{b^i}{\varrho^i+b^i}$ if $\varrho^i+b^i\neq 0$ and $a=\frac{b}{\varrho+b}$ if $\varrho+b\neq 0$. 
\end{lemma}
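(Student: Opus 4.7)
The plan is to extract weak limits from the uniform bounds, pass to the limit in the linear transport equations, and then exploit the fact that $\mathcal{F}(\varrho,b) = \varrho^2/(\varrho+b)$ is positively $1$-homogeneous to derive an \emph{exact conservation law} via the vectorial renormalized formulation of Lemma~\ref{Renormalized-equ}. Combined with the initial-data hypothesis this will force the weighted strong convergence.

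First I would carry out the routine compactness step. Interpolation and the embedding \eqref{embedding-Delta-t} give $w^i \to w$ in $C^{1/5}([0,T]; C^{1,1/10}(\Gamma))$, so the Lipschitz domains $\Omega_{w^i}(t)$ converge to $\Omega_w(t)$ uniformly in time. Banach--Alaoglu extracts $\varrho^i \rightharpoonup^* \varrho$ in $L^\infty(0,T;L^\gamma(\sfB))$, $b^i \rightharpoonup^* b$ in $L^\infty(0,T;L^2(\sfB))$, and $\bu^i \rightharpoonup \bu$ in $L^2(0,T; W_0^{1,2}(\sfB))$. Aubin--Lions applied to the zero-extended continuity and magnetic equations upgrades the density convergences to $C_{\mathrm{weak}}([0,T];L^\gamma)$ and $C_{\mathrm{weak}}([0,T];L^2)$, and passage to the limit in the weak formulations (using $\bu^i \rightharpoonup \bu$ together with compactness of $\varrho^i,b^i$ in $L^2(H^{-1})$) produces $(\varrho,\bu)$ and $(b,\bu)$ as transport solutions on $(0,T)\times \sfB$. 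The interface condition $\bu^i|_{\Gamma_{w^i}} = w^i_t \,\bfn$ survives in the limit thanks to the uniform convergence of $w^i$, so Lemma~\ref{extend-zero} forces $\varrho$ and $b$ to vanish outside $\Omega_w(t)$.

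The core step is the observation that, writing $\mathbf{r}=(\varrho,b)$, a direct calculation gives $\nabla\mathcal{F}\cdot\mathbf{r} - \mathcal{F} = 0$ for $\mathcal{F}(\varrho,b)=\varrho^2/(\varrho+b)$, so the $\rmdiv\bu$-term in the vectorial renormalized formulation vanishes identically. Since $\mathcal{F}$ is singular at the origin, I would work with the $C^1$ approximation $\mathcal{F}_\epsilon(\varrho,b) = \varrho^2/(\varrho+b+\epsilon)$, which has bounded gradient and satisfies $\mathcal{F}_\epsilon(0,0)=0$; the error term is computed to be $\nabla \mathcal{F}_\epsilon \cdot \mathbf{r} - \mathcal{F}_\epsilon = \epsilon \varrho^2/(\varrho+b+\epsilon)^2$, which is \emph{uniformly} bounded by $\epsilon$. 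Hence, applying Lemma~\ref{Renormalized-equ} with a cutoff $\phi \equiv 1$ on a ball covering every $\Omega_{w^i}(t)$ and letting $\epsilon\to 0$ by dominated convergence (using $\mathcal{F}_\epsilon \leq \varrho$ pointwise and $\rmdiv\bu^i \in L^2_{t,x}$) produces the exact conservation identities
\begin{equation*}
\int_\sfB \frac{(\varrho^i)^2}{\varrho^i+b^i}(t) = \int_\sfB \frac{(\varrho^i_0)^2}{\varrho^i_0+b^i_0}, \qquad \int_\sfB \frac{\varrho^2}{\varrho+b}(t) = \int_\sfB \frac{\varrho_0^2}{\varrho_0+b_0}
\end{equation*}
for a.e.\ $t \in (0,T)$, and analogously with $\varrho$ replaced by $b$. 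Combining with the initial-data hypothesis then gives $\limsup_{i\to\infty}\int_\sfB (\varrho^i)^2/(\varrho^i+b^i)(t) \leq \int_\sfB \varrho^2/(\varrho+b)(t)$, and the symmetric bound for $b^i$.

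To conclude, I expand $(\varrho^i+b^i)|a^i-a|^2 = (\varrho^i)^2/(\varrho^i+b^i) - 2\varrho^i a + (\varrho^i+b^i)a^2$; since $0\leq a \leq 1$ is a bounded measurable test function, weak-$*$ convergence of $\varrho^i$ and $b^i$ tested against $a$ and $a^2$ controls the last two terms, and combined with the $\limsup$ above yields $\limsup_i \int_\sfB (\varrho^i+b^i)|a^i-a|^2 \leq 0$. Dominated convergence in time (integrand bounded by $2(\varrho^i+b^i) \in L^\infty_t L^1_x$) gives the $p=2$ claim, and interpolation with $|a^i-a|\leq 2$ extends it to all $p\in[1,\infty)$; the case $a=b/(\varrho+b)$ is symmetric. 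The \textbf{main obstacle} is the rigorous justification of the renormalized formulation for the singular functional $\mathcal{F}$ on the variable domains: one must verify that Lemma~\ref{Renormalized-equ} applies to $\mathcal{F}_\epsilon$ uniformly in $i$, which requires reducing each moving-domain renormalization to one on the fixed ball $\sfB$ (using Lemma~\ref{extend-zero}) and combining the uniform $L^\infty_t L^\gamma_x$-bound on $\varrho^i$ with the $L^2_{t,x}$-integrability of $\rmdiv\bu^i$ to guarantee the error control needed for $\epsilon \to 0$.
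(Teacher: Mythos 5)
Your proposal is correct and follows essentially the route the paper itself takes: the paper omits the proof and defers to the cited two--fluid compactness results (Wen's Lemma 2.1 and its analogue in \cite{KMN-24}), and your argument — exact conservation of the positively $1$-homogeneous quantities $\varrho^2/(\varrho+b)$ and $b^2/(\varrho+b)$ obtained from the vectorial renormalized formulation of Lemma~\ref{Renormalized-equ} (with the regularization $\varrho^2/(\varrho+b+\epsilon)$, whose commutator defect is pointwise bounded by $\epsilon$), combined with the quadratic expansion of $(\varrho^i+b^i)|a^i-a|^2$ and weak convergence against the bounded functions $a$ and $a^2$ — is precisely the mechanism used there. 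The only point worth making explicit is that invoking Lemma~\ref{Renormalized-equ} also requires the $L^2$-in-space-time integrability of the densities (supplied elsewhere in the paper via Lemma~\ref{high-int-press} and the assumption $\gamma>\frac{5}{3}$), a hypothesis your outline uses implicitly when passing to the renormalized equation with $W^{1,2}$ velocities.
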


\bibliographystyle{siam} 

\bibliography{ref-MHD-structure}

\end{document}